\newcommand{\black}{\color{black}}
\newcommand{\rateconst}{36} % constant from Lemma 1
\newcommand{\Ex}{\mathsf{E}}
\renewcommand{\Pr}{\mathsf{P}}
\newcommand{\cov}{\mathsf{Cov}}
\newcommand{\tr}{\mathsf{Tr}}
\newcommand{\wh}{\widehat}
\newcommand{\Asel}{\wh{A}}
\newcommand{\Ssel}{\S_{\Asel}}
\newcommand{\BB}{\mathbf{B}}
\newcommand{\CC}{\mathbf{C}}
\newcommand{\Hyper}{{\rm Hypergeometric}}
\renewcommand{\u}{\mathbf{u}}
\renewcommand{\v}{\mathbf{v}}
\newcommand{\w}{\mathbf{w}}
\newcommand{\A}{\mathbf{A}}
\newcommand{\B}{\mathbf{B}}
\newcommand{\C}{\mathbf{C}}
\newcommand{\E}{\mathbf{E}}
\newcommand{\J}{\mathbf{J}}
\newcommand{\M}{\mathbf{M}}
\newcommand{\W}{\mathbf{W}}
\newcommand{\X}{\mathbf{X}}
\newcommand{\Z}{\mathbf{Z}}
\newcommand{\Y}{\mathbf{Y}}
\newcommand{\x}{\mathbf{x}}
\newcommand{\LL}{\boldsymbol{\Lambda}}
\newcommand{\tE}{\widetilde{\E}}
\newcommand{\hE}{\widehat{\E}}
\newcommand{\bfSigma}{\boldsymbol{\Sigma}}
\newcommand{\hSS}{\widehat{\bfSigma}}
\newcommand{\OO}{\boldsymbol{\Omega}}
\newcommand{\hOO}{\widehat{\OO}}
\newcommand{\U}{\mathbf{U}}
\newcommand{\D}{\mathbf{D}}
\newcommand{\V}{\mathbf{V}}
\renewcommand{\S}{\mathbf{S}}
\newcommand{\I}{\mathbf{I}}
\newcommand{\col}[2]{{#1}_{* {#2}}}
\newcommand{\row}[2]{{#1}_{{#2} *}}
\newcommand{\ind}[1]{{\mathbf{1}_{\left\{{#1}\right\}}}}
\newcommand{\supp}{\mathrm{supp}}
\renewcommand{\sp}{\mathrm{span}}
\newcommand{\hf}{\frac{1}{2}}
\newcommand{\RR}{\mathbb{R}}
\theoremstyle{plain}
\newtheorem{theorem}{Theorem}%[section]
\newtheorem{lemma}{Lemma}
\theoremstyle{definition}
\newtheorem{remark}{Remark}
\newcommand{\beq}{\begin{equation}}
\newcommand{\eeq}{\end{equation}}
\newcommand{\beas}{\begin{eqnarray*}}
\newcommand{\eeas}{\end{eqnarray*}}
\newcommand{\bea}{\begin{eqnarray}}
\newcommand{\eea}{\end{eqnarray}}
\newcommand{\bei}{\begin{itemize}}
\newcommand{\eei}{\end{itemize}}
\newcommand{\ben}{\begin{enumerate}}
\newcommand{\een}{\end{enumerate}}
\newcommand{\hV}{\wh\V}
\newcommand{\hv}{\wh\v}
\newcommand{\eg}{e.g.\xspace}
\newcommand{\ie}{i.e.\xspace}
\newcommand{\ones}{\mathbf 1}
\newcommand{\reals}{{\mathbb{R}}}
\newcommand{\integers}{{\mathbb{Z}}}
\newcommand{\naturals}{{\mathbb{N}}}
\newcommand{\eexp}{{\rm e}}
\newcommand{\diff}{{\rm d}}
\newcommand{\rank}{\mathop{\sf rank}}
\newcommand{\Tr}{\mathop{\sf Tr}}
\newcommand{\diag}{\mathop{\text{diag}}}
\newcommand{\Expect}{\mathsf{E}}
\newcommand{\expect}[1]{\mathsf{E} #1 }
\newcommand{\Prob}{\mathsf{P}}
\newcommand{\prob}[1]{{ \mathsf{P}\left\{ #1 \right\} }}
\newcommand{\eqdistr}{{\stackrel{\rm (d)}{=}}}
\newcommand{\argmax}{\mathop{\rm argmax}}
\newcommand{\Th}{{^{\rm th}}}
\newtheorem{prop}{Proposition}
\newcommand{\lunder}[1]{{\underset{\raise0.3em\hbox{$\smash{\scriptscriptstyle-}$}}{#1}}}
\newcommand{\norm}[1]{\|{#1} \|}
\newcommand{\fnorm}[1]{\|#1\|_{\rm F}}
\newcommand{\opnorm}[1]{\| #1 \|}
\newcommand{\Opnorm}[1]{\left\| #1 \right\|}
\newcommand{\iprod}[2]{\left \langle #1, #2 \right\rangle}
\newcommand{\Iprod}[2]{\langle #1, #2 \rangle}
\newcommand{\indc}[1]{{\mathbf{1}_{\left\{{#1}\right\}}}}
\def\innergetnumber#1[#2]#3{#2}
\def\getnumber{\expandafter\innergetnumber\jobname}
\newcommand{\bszero}{{\boldsymbol{0}}}
\newcommand{\bbB}{{\mathbb{B}}}
\newcommand{\bbS}{{\mathbb{S}}}
\newcommand{\calE}{{\mathcal{E}}}
\newcommand{\bfv}{{\mathbf{v}}}
\newcommand{\bfA}{{\mathbf{A}}}
\newcommand{\bfI}{{\mathbf{I}}}
\newcommand{\bfV}{{\mathbf{V}}}
\newcommand{\bfW}{{\mathbf{W}}}
\newcommand{\bfX}{{\mathbf{X}}}
\newcommand{\bfY}{{\mathbf{Y}}}
\newcommand{\bfZ}{{\mathbf{Z}}}
\newcommand{\tu}{{\tilde{u}}}
\newcommand{\tI}{{\tilde{I}}}
\newcommand{\comp}[1]{{#1^{\rm c}}}
\newcommand{\ntok}[2]{{#1,\ldots,#2}}
\newcommand{\pth}[1]{\left( #1 \right)}
\newcommand{\sth}[1]{\left\{ #1 \right\}}
\newcommand{\fracd}[2]{\frac{\diff #1}{\diff #2}}
\renewcommand{\tu}{{\tilde{\u}}}
\newcommand{\tw}{{\tilde{\w}}}
\title{Optimal Estimation and Rank Detection for Sparse Spiked Covariance Matrices}
\author{
Tony Cai\footnote{Tony Cai is with Department of Statistics, The Wharton School, University of Pennsylvania, Philadelphia, PA 19104.
The research of Tony Cai was supported in part by NSF FRG Grant DMS-0854973, NSF Grant DMS-1208982, 
%{$~$\; \quad} 
and NIH Grant R01 CA 127334-05.},
~~ Zongming Ma\footnote{Zongming Ma is with Department of Statistics, The Wharton School, University of Pennsylvania, Philadelphia, PA 19104. The research of Zongming Ma is supported in part by the Dean's Research Fund of the Wharton School.}, ~ and  ~ Yihong Wu\footnote{Yihong Wu is with Department of Electrical and Computer Engineering, University of Illinois at Urbana-Champaign, Urbana, IL 61801. The research of  Yihong Wu was supported in part by NSF FRG Grant DMS-0854973.}\\
\\
%\author{T. Tony Cai\thanks{\url{tcai@wharton.upenn.edu}} } 
%\author{Zongming Ma\thanks{\url{zongming@wharton.upenn.edu}} } 
%\author{Yihong Wu\thanks{\url{yihongwu@wharton.upenn.edu}}}
%Department of Statistics \\
%The Wharton School\\
%University of Pennsylvania\\
%Philadelphia, PA 19104
}
\date{}
\begin{document}
	\maketitle

\begin{abstract}
This paper considers sparse spiked covariance matrix models in the high-dimensional setting and studies the minimax estimation of the covariance matrix and the principal subspace as well as the minimax rank detection. 
%Each of these three problems is important in its own right. 
The optimal rate of convergence for estimating the spiked covariance matrix under the spectral norm is established, which requires significantly different techniques from those for estimating other structured covariance matrices such as bandable or sparse covariance matrices. 
%This  rate is quite different from those for estimating other structured covariance matrices such as bandable or sparse covariance matrices. 
We also establish the minimax rate under the spectral norm for estimating the principal subspace, the primary object of interest in principal component analysis. In addition,  the optimal rate for the rank detection boundary is obtained. 
This result also resolves a gap between the upper and lower bounds in a recent paper by \citet{BR12} where the special case of rank one is considered.  

\end{abstract} 	
 
\bigskip\noindent
{\bf Keywords:}  Covariance matrix,  group sparsity, low-rank matrix, minimax rate of convergence, sparse principal component analysis, principal subspace, rank detection.

\medskip\noindent
{\bf AMS 2000 subject classifications:} Primary 62H12; secondary 62F12, 62G09

\newpage
%!TEX root = /Users/zongming/Documents/Dropbox/Tony-Zongming-Yihong/SPCA-opnorm/SPCA_opnorm_sub.tex

\section{Introduction}

Covariance matrix plays a fundamental role in multivariate analysis. Many  methodologies, including discriminant analysis,  principal component analysis and clustering analysis, rely critically on the knowledge of the covariance structure. Driven by a wide range of contemporary applications in many fields including genomics, signal processing, and financial econometrics,  estimation of covariance matrices in the high-dimensional setting is of particular interest. 

There have been significant recent advances  on the estimation of a large covariance matrix and its inverse, the precision matrix. 
A variety of regularization methods, including banding, tapering, thresholding and penalization, have been introduced for estimating several classes of covariance and precision matrices with different structures. See, for example,  
\cite{BJ08b, BJ08a,
CL11, CZZ10,
CZ12, FHT08, Karoui08, LamFan09, 
RWRY11, Yuan10},
% \cite{BJ08b, BJ08a,
% CL11, CLZ12, CY12, CZZ10,
% CZ12, FHT08, JohnstoneLu09, Karoui08, LamFan09, 
% RWRY11, Yuan10,  YuanLin07,
% Zou06},
among many others.
 
% \nr{A little motivation for the  spiked covariance matrix model is useful here.}

\subsection{Sparse spiked covariance matrix model}
In the present paper, we consider spiked covariance matrix models in the high-dimensional setting, which arise naturally from \emph{factor models} with homoscedastic noise. 
To be concrete, suppose that we observe an $n\times p$ data matrix $\X$  with the rows $\row{\X}{1}$, ..., $\row{\X}{n}$ i.i.d.~following a multivariate normal distribution with mean $\bszero$ and covariance matrix $\bfSigma$, denoted by by $N(\bszero,\bfSigma)$,
% $\stackrel{iid}{\sim} N(\bszero,\bfSigma)$,
%{\black \ie, the multivariate normal distribution with mean $\bszero$ and covariance matrix $\bfSigma$}. 
where the covariance matrix $\bfSigma$ is given by 
\begin{equation}
\bfSigma = \cov(\row{\X}{i})= \V \LL \V' + \sigma^2 \I_p,
	\label{eq:spike-model}
\end{equation}
where $\LL = {\rm diag}(\lambda_1,\dots, \lambda_r)$ with $\lambda_1\geq \cdots\geq \lambda_r >0$, and $\V = [\bfv_1,\dots,\bfv_r]$ is $p\times r$ with orthonormal columns. The $r$ largest eigenvalues of $\bfSigma$ are $\lambda_i+\sigma^2$, $i=1, ..., r$, and the rest are all equal to $\sigma^2$. The $r$ leading eigenvectors of $\bfSigma$ are given by the column vectors of $\V$. Since the spectrum of $\bfSigma$ has $r$ spikes, \eqref{eq:spike-model} is termed by \cite{Johnstone01} as the spiked covariance matrix model.
This covariance structure and its variations have been widely used in signal processing, chemometrics, econometrics, population genetics, and many other fields.
See, for instance, \cite{Fan08, Kritchman08, onat09, patterson}.
In the high-dimensional setting, various aspects of this model have been studied by several recent papers, including but not limited to \cite{BR12, Birnbaum12, CMW12, JohnstoneLu09, Jung09, Ma11, Onatski12, Paul07}.
For simplicity, we assume $\sigma$ is known. 
Since  $\sigma$  can always be factored out by scaling $\X$, without loss of generality, we assume $\sigma = 1$. Data-based estimation of $\sigma$ will be discussed in \prettyref{sec:discussion}.
 
The primary focus of this paper is on the setting where $\V$ and $\bfSigma$ are sparse, and our goal is threefold. 
First, we consider the minimax estimation of the spiked covariance matrix $\bfSigma$ under the spectral norm. The method as well as the optimal rates of convergence in this problem are considerably different from those for estimating other recently studied structured covariance matrices, such as bandable and sparse covariance matrices. 
Second, we are interested in rank detection. The rank $r$ plays an important role in principal component analysis (PCA) and is also of significant interest in signal processing and other applications. 
Last but not least, we consider optimal estimation of the principal subspace $\sp(\V)$ under the spectral norm, which is the main object of interest in PCA. Each of these three problems is important in its own right. 

We now explain the sparsity model of $\V$ and $\bfSigma$. 
The difficulty of estimation and rank detection depends on the joint sparsity of the columns of $\V$.
Let $\row{\V}{j}$ denote the $j\Th$ row of $\V$. 
% Order the row norms in decreasing order as $\|\row{\V}{[1]}\| \geq \cdots \geq \|\row{\V}{[p]}\|$. 
% We define the \emph{weak $\ell_q$ semi-norm} of $\V$ as
% \begin{equation}
% \|\V\|_{q,w} \triangleq	\max_{j \in [p]} j \|\row{\V}{[j]}\|^q 
% 	\label{eq:lqw}
% \end{equation}
The row support of $\bfV$ is defined by
\begin{equation}
\label{eq:supp}
\supp(\V) = \{j\in [p]: \row{\V}{j} \neq \bszero\},
\end{equation}
{\black whose cardinality is denoted by $|\supp(\V)|$.}
Let the collection of $p \times r$ matrices with orthonormal columns be
% \begin{equation}
$O(p,r) = \sth{\V \in \reals^{p \times r}: \V'\V=\I_r}$.
	% \label{eq:Opr}
% \end{equation}
Define the following parameter spaces for $\bfSigma$,
\begin{equation}
\label{eq:para.space}
\begin{aligned}
\Theta_0(k,p,r,\lambda,\tau) = 
\{\bfSigma = \V \LL \V' + \I_p: & \lambda/\tau \leq \lambda_r\leq\cdots\leq\lambda_1 \leq \lambda,\\
&  \V \in O(p,r),\, |\supp(\V)| \leq k\},	
\end{aligned}
\end{equation}
where $\tau\geq 1$ is a constant {\black and $r\leq k\leq p$ is assumed throughout the paper.} Note that {\black the condition number of $\LL$ is at most ${\tau}$}. 
Moreover, for each covariance matrix in $\Theta_0(k,p,r,\lambda,\tau)$, the leading $r$ singular vectors (columns of $\V$) are \emph{jointly $k$-sparse} in the sense that 
%they share a common support set.
the row support size of $\V$ is upper bounded by $k$. 
The structure of group sparsity has proved useful for high-dimensional regression; See, for example, \cite{Lounici11}. 
In addition to \prettyref{eq:para.space}, we also define the following parameter spaces by dropping the dependence on $\tau$ and $r$, respectively: 
% \nr{We probably should explain the notation $ \mathbf{cl}$ below.}
\begin{equation}
\begin{aligned}
\Theta_1(k,p,r,\lambda) 
= & ~ \mathbf{cl} \bigcup_{\tau \geq 1} \Theta_0(k,p,r,\lambda,\tau) \\
= & ~ \{\bfSigma = \V \LL \V' + \I_p: 0 \leq \lambda_r\leq\cdots\leq \lambda_1 \leq \lambda, \\
& \hskip 9em \V \in O(p,r),\, |\supp(\V)| \leq k \}	
\end{aligned}
\label{para.space1}
\end{equation}
%\begin{eqnarray}
%\Theta_1(k,p,r,\lambda) 
%& ~ =& \bigcup_{\tau \geq 1} \Theta_0(k,p,r,\lambda,\tau) \nonumber\\
%&=& \{\bfSigma = \V \LL \V' + \I_p: 0 < \lambda_r\leq\cdots\leq \lambda_1 \leq \lambda, \V \in O(p,r),\, |\supp(\V)| \leq k \}
%\label{para.space1}
%\end{eqnarray}
and
\begin{equation}
\label{para.space2}
\Theta_2(k,p, \lambda,\tau)  = \bigcup_{r = 0}^k \Theta_0(k,p,r,\lambda,\tau).
\end{equation}
%{\black In all these parameter spaces, it is implicitly assumed that $r\leq k\leq p$.}
As a consequence of the group sparsity in $\V$, a covariance matrix $\bfSigma$ in any of the above parameter spaces
% except for the diagonal, 
has at most $k$ rows and $k$ columns containing nonzero off-diagonal entries.
We note that the matrix is more structured than the so-called ``$k$-sparse'' matrices considered in \cite{BJ08b, CL11,  CZ12}, where each row (or column) has at most $k$ nonzero off-diagonals. 
%Since the locations of these entries can differ from row to row, the union of all such locations can cover all $p$ coordinates. In contrast, for those spiked covariance matrices in the parameter spaces defined in \eqref{eq:para.space}--\eqref{para.space2}, although each row is also allowed to have at most $k$ nonzero off-diagonals,  the union of all such locations has cardinality at most $k$.

\subsection{Main contributions}
In statistical decision theory, the minimax rate quantifies the difficulty of an inference
% (or signal detection) 
problem and is frequently used as a benchmark for the performance of inference
% estimation
% (or detection) 
procedures.
%Significant advances have been made on the minimax theory in the statistics literature, especially for nonparametric function estimation {\black\cite{Tsybakov09,Johnstone11}}. 
The main contributions of this paper include the \emph{sharp non-asymptotic} minimax rates for estimating the covariance matrix $\bfSigma $ and the principal subspace $\sp(\V)$ under the squared spectral norm loss, as well as for detecting the rank $r$ of the principal subspace. In addition, we also establish the minimax rates for estimating the precision matrix $\OO = \bfSigma^{-1}$ as well as the eigenvalues of $\bfSigma$ under the spiked covariance matrix model \eqref{eq:spike-model}. 

We establish the minimax rate for estimating the spiked covariance matrix $\bfSigma$ in \eqref{eq:spike-model} under the spectral norm
\begin{equation}
\label{eq:sigma-loss}
L(\bfSigma,\wh\bfSigma) = \norm{\bfSigma -  \wh\bfSigma}^2,
\end{equation}
where for a matrix $\A$ its spectral norm is defined as $\|\A\|=\sup_{\|\x\|_2=1}\| \A\x\|_2$ {\black with $\|\cdot\|_2$ the vector $\ell_2$ norm}. 
\iffalse
The minimax problem for matrix estimation exhibits new features that are significantly different from those that occur in the conventional signal recovery problems. The spectral norm loss depends on the entries/rows/columns of the error matrix in a complicated non-additive way and the object of interest is high-dimensional and ``two-directional" (row-wise and column-wise).
%where one direction is along the rows and the other along the columns. 
Thus, the minimax estimation of the covariance matrix under the spectral norm requires considerably different techniques as compared to the classical minimax theory. 
%\nb{The nonparametric part of the lower bound comes from a vector construction rather than a direct two-dimensional matrix construction, while the parametric term comes from a matrix testing argument. Shall we distinguish?}
\fi
The minimax upper and lower bounds developed in Sections \ref{sec:upper} and \ref{sec:lower} yield the following optimal rate for estimating sparse spiked covariance matrices under the spectral norm
\begin{equation}
% \inf_{\hSS} 
\inf_{\hSS}\sup_{\bfSigma \in\Theta_1(k,p,r,\lambda) } \Ex \|\hSS -\bfSigma \|^2 
\asymp
{\black
\bigg [
\frac{(\lambda+1) k}{n} \log \frac{\eexp p}{k} + \frac{\lambda^2 r}{n}
\bigg ] 
}  \wedge \lambda^2,
\label{eq:minimax-rate-Sigma}
\end{equation}
subject to certain mild regularity conditions.\footnote{Here and after, {\black $a\wedge b \triangleq \min(a,b)$} and $a_n \asymp b_n$ means that $\frac{a_n}{b_n}$ is bounded from both below and above by constants independent of $n$ and all model parameters.}
%The first term in the squared bracket is a  {\black ``combinatorial''} term which results from the uncertainty about the support of the principal eigenvectors, while the second term can be labeled ``parametric'' because it is always present, even when $\supp(\V) $ is known. 
{\black The two terms in the squared bracket are contributed by the estimation error of the eigenvectors $\V$ and the eigenvalues, respectively. Note that the second term can be dominant if $\lambda$ is large.}

An important quantity of {\black the spiked model} is the rank $r$ of the principal subspace $\sp(\V)$, or equivalently, the number of spikes in the spectrum of $\bfSigma$, which is of significant interest in chemometrics \cite{Kritchman08}, signal array processing \cite{BN09}, and other applications. 
Our second goal is 
the minimax estimation of the rank $r$ under \emph{the zero--one loss}, or equivalently, the minimax detection of the rank $r$. 
It is intuitively clear that the difficulty in estimating the rank $r$ depends crucially on the magnitude of the minimum spike $\lambda_r$. 
Results in Sections \ref{sec:upper} and \ref{sec:lower} show that the optimal rank detection boundary over the parameter space $\Theta_2(k,p, \lambda,\tau)$ is of order $\sqrt{\frac{k}{n} \log \frac{\eexp p}{k}}$. 
Equivalently, the rank $r$ can be 
{\black exactly recovered with high probability}
% estimated with an arbitrarily small error probability 
if $\lambda_r \geq \beta \sqrt{\frac{k}{n} \log \frac{e p}{k}}$ for a sufficiently large constant $\beta$; On the other hand, reliable detection becomes impossible by any method if $\lambda_r \leq \beta_0 \sqrt{\frac{k}{n} \log \frac{e p}{k}}$ for some positive constant $\beta_0$.
{\black Lying in the heart of the arguments is a careful analysis of the moment generating function of a squared symmetric random walk stopped at a hypergeometrically distributed time, which is summarized in \prettyref{lmm:rm}.
It is worth noting that the optimal rate for rank detection obtained in the current paper resolves a gap left open in a recent paper by Berthet and Rigollet \cite{BR12}, where the authors obtained the optimal detection rate $\sqrt{\frac{k}{n} \log \frac{e p}{k}}$ for the rank-one case in the regime of $k \ll \sqrt{p}$, but the lower bound deteriorates to $\sqrt{\frac{p}{kn}}$ when $k \gg \sqrt{p}$ which is strictly suboptimal.
}

{\black In many statistical applications, instead of the covariance matrix itself, the object of direct interest is often a lower dimensional functional of the covariance matrix, \eg, the principal subspace $\sp(\V)$.
%As is common in many nonparametric function estimation problems, plugging in a rate-optimal estimate of a large covariance matrix does not automatically yield a rate-optimal estimate of its low-dimensional functional. 
%\nb{Tony: In some sense, the estimator for $\V$ is a plug-in estimator, though the minimax rates cannot be derived directly from that for estimating $\bfSigma$. -ZM}
} This problem is known in the literature as sparse PCA \cite{Birnbaum12, CMW12, JohnstoneLu09,Ma11}.
\iffalse
, a commonly used approach for dimension reduction and feature extraction with a wide range of applications including data clustering, computer vision, and image recognition.
%\nr{Add some discussion on sparse PCA and some references.}
\fi
The third goal of the paper is the minimax estimation of the principal subspace $\sp(\V)$.
% The principal subspace $\sp(\V)$ is the main object of interest in PCA.
To this end, we note that the principal subspace can be uniquely identified with the associated projection matrix $\V\V'$. Moreover, any estimator can be identified with a projection matrix $\wh\V\wh\V'$, where the columns of $\wh\V$ constitute an orthonormal basis for the subspace estimator.
Thus, estimating $\sp(\V)$ is equivalent to estimating $\V\V'$. We aim to optimally estimate $\sp(\V)$ under the loss \cite[Section II.4]{stsu90}
\begin{equation}
\label{eq:loss} 
L(\V,\wh\V) = \norm{\V\V' - \wh\V\wh\V'}^2,
\end{equation}
% which is a commonly used metric to measure the distance between linear subspaces \cite[Section II.4]{stsu90}. 
which equals the squared sine of the largest canonical angle between the respective linear spans. In the sparse PCA literature, the loss \prettyref{eq:loss} was first used in \cite{Ma11} {\black for multi-dimensional subspaces}.
For this problem, we shall show that,
% For estimation of the principal subspace $\sp(\V)$ under the loss  \eqref{eq:loss}, it is shown that,  
under certain regularity conditions, the minimax rate of convergence is
\begin{equation}
\inf_{\hV} \sup_{\bfSigma\in\Theta_0(k,p, r, \lambda,\tau)} \Ex \|\hV\hV' -\V\V' \|^2 
\asymp
 \frac{ (\lambda+1)k}{n \lambda^2} \log \frac{\eexp p}{k} \wedge 1.
	\label{eq:minimax-rate-V}
\end{equation}

{\black 
In the present paper we considered estimation of the principal subspace $\sp(\V)$ under the spectral norm loss \eqref{eq:loss}. It is interesting to compare the results with those for optimal estimation under the Frobenius norm loss \cite{CMW12,Vu12space} $\fnorm{\V\V' - \wh\V\wh\V'}^2$, whose ratio to \prettyref{eq:loss} is between $1$ and $2r$. The optimal rate under the spectral norm loss given in \eqref{eq:minimax-rate-V} does not depend on the rank $r$, whereas the optimal rate under the Frobenius norm loss has an extra term $\frac{\lambda+1}{n\lambda^2}r(k-r)$, which depends on the rank $r $ quadratically through $r(k - r)$ \cite{CMW12}.
 % and the discussion following \prettyref{thm:rate-V}. 
%The rate under the spectral norm loss is much smaller than that under the Frobenius norm loss when $r\gg \log\frac{\eexp p}{k}$.
Therefore the rate under the Frobenius norm far exceeds \prettyref{eq:minimax-rate-V} when $r\gg \log\frac{\eexp p}{k}$.
When $r = 1$, both norms lead to the same rate and the result in \eqref{eq:minimax-rate-V} recovers earlier results on estimating the leading eigenvector obtained in \cite{Birnbaum12, Vu12,Lounici12}.
}

\iffalse
We should emphasize that the optimal rate \eqref{eq:minimax-rate-Sigma} for estimating the entire covariance matrix and the optimal rate \eqref{eq:minimax-rate-V} for estimating its principal subspace are not direct consequences of each other. In particular, the optimal rate of convergence in \eqref{eq:minimax-rate-V} does not depend on the rank $r$,
%This is rather different from the optimal rate under the Frobenius norm loss where the minimax rate depends on the rank $r $ quadratically through $r(k - r)$. See \cite{CMW12}. 
while the optimal rate for estimating $\bfSigma$ depends on $r$ through the ``parametric'' term. 
% \nb{Do we want to discuss the lower bound techniques here? This is optional.}
\fi

In addition to the optimal rates for estimating the  covariance matrix $\bfSigma$, the rank $r$ and the principal subspace $\sp(\V)$,  the minimax rates for estimating the precision matrix $\OO = \bfSigma^{-1}$ as well as the eigenvalues of $\bfSigma$ are also established. 
% These results follow from similar arguments to those used in the proof for estimating $\bfSigma$.
% , since the Woodbury matrix identity implies that the inverse of a spiked covariance matrix with sparse eigenvectors is also of a similar form. 

\subsection{Other related work}
\label{sec:other}
Apart from the spiked covariance matrix model studied in this paper, other covariance matrix models have been considered in the literature. The most commonly imposed structural assumptions include ``Toeplitz", where  each descending diagonal from left to right is constant,  ``bandable", where the entries of the covariance matrix decay as they move away from the diagonal, and ``sparse", where only a small number of entries in each row/column are nonzero. 
% Many {\black estimation}
% % regularization 
% methods have been proposed and their properties studied under these assumptions. 
%See, for example,  \cite{CaiRenZhou11, BJ08a, BJ08b, CZZ10, CZ12, CY12}. 
The optimal rates of convergence were established in \cite{CaiRenZhou12}, \cite{CZZ10} and \cite{CZ12} for estimating Toeplitz, bandable, and sparse covariance matrices, respectively.
 % in \cite{CZZ10} for bandable covariance matrices, and in \cite{CZ12} for sparse covariance matrices. 
Estimation of sparse precision matrices has also been actively studied due to its close connection to Gaussian graphical models \cite{CLZ12,RWRY11,Yuan10}. 
% Cai et al.~\cite{CLZ12} proposed an adaptive constrained $\ell_1$ minimization method for estimating sparse precision matrix and established the optimal rate of convergence. See also \cite{RWRY11, Yuan10}. 
{\black In addition, our work is also connected to the estimation of effective low-rank covariance matrices. See, for example, \cite{Lounici12low, Bunea12} and the reference therein.}
% In addition,
% our work is also connected to the recovery of low-rank matrices.  See, for example, \cite{Koltchinskii11, Negahban11,  rankmin, Rhode11}.

\subsection{Organization}
The rest of the paper is organized as the following. \prettyref{sec:procedure} introduces basic notation and then gives a detailed description of the procedures for estimating the spiked covariance matrix $\bfSigma$, the rank $r$ and the principal subspace $\sp(\V)$. The rates of convergence of these estimators are given in Section \ref{sec:upper}. 
% In particular, the rates of convergence of the estimators are given. 
Section \ref{sec:lower} presents the minimax lower bounds that match the upper bounds in Section \ref{sec:upper} in terms of the convergence rates, thereby establishing the minimax rates of convergence and rate-optimality of the estimators constructed in Section \ref{sec:procedure}. The minimax rates for estimating the eigenvalues and the precision matrix are given in Section \ref{sec:precision-eigenvalue}. Section \ref{sec:discussion} discusses computational and other related issues. The proofs are given in Section \ref{sec:proof}.

%!TEX root = /Users/zongming/Documents/Dropbox/Tony-Zongming-Yihong/SPCA-opnorm/SPCA_opnorm_sub.tex

\section{Estimation Procedure}
\label{sec:procedure}

We give a detailed description of the estimation procedure in this section and study its properties in Section \ref{sec:upper}.  
Throughout, we shall focus on minimax estimation and assume the sparsity $k$ is
% and the noise level $\sigma = 1$ are 
known, while the rank $r$ will be selected based on data. 
% Estimation of $\sigma$ and 
Adaptation to $k$ will be discussed in Section \ref{sec:discussion}.

% \subsection{Notation}
\paragraph{Notation}
We first introduce some notation. For any matrix $\X = (x_{ij})$ and any vector $\u$, denote by $\|\X\|$ the spectral norm, $\fnorm{\X}$ the Frobenius norm, and $\|\u\|$ the vector $\ell_2$ norm. 
Moreover,  the $i\Th$ row of $\X$ is denoted by $\row{\X}{i}$ and the $j\Th$ column by $\col{\X}{j}$.
Let $\supp(\X) = \{i: \row{\X}{i} \neq 0\}$ denote the row support of $\X$. For a positive integer $p$, $[p]$ denotes the index set $\{1, 2, ..., p\}$. 
For any set $A$, $|A|$ denotes its cardinality, and $\comp{A}$ its complement.
For two subsets $I$ and $J$ of indices, denote by $\X_{IJ}$ the $|I|\times |J|$ submatrices formed by $x_{ij}$ with $(i,j) \in I \times J$. Let $\row{\X}{I} = \X_{I [n]}$ and $\col{\X}{J} = \X_{[p] J}$.
For any square matrix $\A = (a_{ij})$, we let $\Tr(\A) = \sum_{i}a_{ii}$ be its trace. Define the inner product of any two matrices $\BB$ and $\CC$ of the same size by  $\Iprod{\BB}{\CC} = \Tr(\BB'\CC)$.
For any matrix $\A$, we use $\sigma_i(\A)$ to denote its $i\Th$ largest singular value. When $\A$ is positive semi-definite, $\sigma_i(\A)$ is also the $i\Th$ largest eigenvalue of $\A$.
% denote the $|A| \times p$ submatrices formed by the rows $\row{\X}{i}$ with $i\in A$.
%the indices of the rows of $\X$ with at least one nonzero entries.
For any real number $a$ and $b$, set $a\vee b = \max\{a,b\}$ and $a\wedge b = \min\{a,b\}$. 
%\nb{NOT USED: For any two sequences $\{a_n\}$ and $\{b_n\}$ of positive numbers, we write $a_n\gtrsim b_n$ when $a_n\geq cb_n$ for some numeric constant $c$, and $a_n\lesssim b_n$ when $a_n\leq Cb_n$ for some numeric constant $C$.  Finally, we write $a_n \asymp b_n$ when both $a_n\gtrsim b_n$ and $a_n\lesssim b_n$ hold.} 
Let $\bbS^{p-1}$ denote the unit sphere in $\reals^p$. 
For any event $E$, we write 
$\indc{E}$
as its indicator function.

For any set $B\subset [p]$, let $\comp{B}$ be its complement.
For any symmetric matrix $\bfA\in \RR^{p\times p}$, we use $\bfA_B$ to denote the $p\times p$ matrix whose $B\times B$ block is $\bfA_{BB}$, the remaining diagonal elements are all ones and the remaining off-diagonal elements are all zeros, \ie, 
\begin{equation}
(\bfA_B)_{ij} = a_{ij} \indc{i \in B,j\in B} + \indc{i=j \in \comp{B}}.	
%\bfA_B = {\rm J}_B (\A - \I) {\rm J}_B  + \I
	\label{eq:AB}
\end{equation}
In other word, after {\black proper} reordering of rows and columns, we have
\begin{equation*}
\bfA_B = \begin{bmatrix}
\bfA_{BB} & \bszero \\ \bszero & \bfI_{\comp{B}\comp{B}}
\end{bmatrix}.
\end{equation*}

%Let $G(k,r)$ denote the Grassmannian manifold consisting of all $r$-dimensional linear subspace of $\reals^k$. Let $O(p)$ denote the collection of all $p \times p$ orthogonal matrices.

%Denote by $\calM(E)$ the collection of all Borel probability measures on $E \subset \reals^d$. 
Let $P \otimes Q$ denote the product measure of $P$ and $Q$ and $P^{\otimes n}$ the $n$-fold product of $P$.
For random variables $X$ and $Y$, we write $X\eqdistr Y$ if they follow the same distribution, and $X \stackrel{\rm{s.t.}}{\leq} Y$ if $\Pr(X>t) \leq \Pr(Y>t)$ for all $t\in \reals$.
Throughout the paper, we use $C$ to denote a generic positive absolute constant, whose actual value may depend on the context. %vary at different occasions. 
% \nb{Where did we use this $C$?}
For any two sequences $\{a_n\}$ and $\{b_n\}$ of positive numbers, we write $a_n\lesssim b_n$ when $a_n\leq C b_n$ for some numeric constant $C$, and $a_n\gtrsim b_n$ when $b_n \lesssim a_n$, and $a_n \asymp b_n$ when both $a_n\gtrsim b_n$ and $a_n\lesssim b_n$ hold.

% \nb{Delete unused notations. Complement of set need unified notation.}

\paragraph{Estimators}

% \nb{dude there is only one subsection of Sec 2}

We are now ready to present the procedure for estimating the spiked covariance matrix $\bfSigma$, the rank $r$, and the principal subspace $\sp(\V)$.

Let
\begin{equation}
A = \supp(\V)
\end{equation}
be the row support of $\V$. 
% \nb{Move it to some later place?}
For any $m\in [p]$, let
\begin{equation}
\label{eq:diag-dev}
\eta(m,n,p,\gamma_1) = 2\left(\sqrt{\frac{m}{n}} + \sqrt{ \frac{\gamma_1}{n} \,m\log\frac{\eexp p}{m}} \right) + \left(\sqrt{\frac{m}{n}} + \sqrt{ \frac{\gamma_1}{n} \,m\log\frac{\eexp p}{m}} \right)^2.
% \quad \mbox{with}\quad 
% t = .
\end{equation}
% and
% \begin{equation}
% \label{eq:subdiag-dev}
% \xi(m,n,p,\gamma_2,\gamma_3) = 
% \sqrt{1+\gamma_2\,\frac{m}{n}\log\frac{\eexp p}{m}}
% \left(2\sqrt{\frac{m}{n}} + \sqrt{\gamma_3\,\frac{m}{n}\log\frac{\eexp p}{m}}\right).
% \end{equation}
% \nb{Seems that $\xi$ is bounded above by $\eta$ for sufficient large constant $\gamma_1$.}

Recall that the observed matrix $\bfX$ has i.i.d.~rows $\row{\bfX}{i}\sim N(\bszero, \bfSigma)$. We define $\S = \frac{1}{n}\bfX'\bfX$ as the sample covariance matrix.
Also recall that we assume knowledge of the sparsity level $k$ which is an upper bound for the support size $|A|$.
The first step in the estimation scheme is to select a subset $\wh{A}$ of $k$ features based on the data.
To this end, let
\begin{equation}
\label{eq:supp-set}
\begin{aligned}
\bbB_k = \{
B\subset [p]:~ & |B| = k,~~\mbox{and for all $D\subset \comp{B}$ with $|D|\leq k$},\\
& \hskip -4em \norm{\S_{D}-\bfI}\leq \eta(|D|,n,p,\gamma_1),~
\norm{\S_{DB}}\leq 2\sqrt{\norm{\S_{B}}}\, \eta(k,n,p,\gamma_1)
\}.	
\end{aligned}
\end{equation}
% For theoretical analysis, we require $\gamma_1\geq 10$.
The appropriate value of $\gamma_1$ will be specified later in the statement of the theorems.
Intuitively speaking, the requirements in \eqref{eq:supp-set} aim to ensure that 
for any $B\in \bbB_k$, there is no evidence in data suggesting that $\comp{B}$ overlaps with the row support $A$ of $\V$.
If $\bbB_k\neq \emptyset$, denote by $\Asel$ an arbitrary element of $\bbB_k$ (or we can let $\Asel = \argmax_{B\in \bbB_k} \tr(\S_{BB})$ for concreteness).
%\end{equation}).
%\begin{equation}
%	\label{eq:Asel}
%\Asel = \argmax_{B\in \bbB_k} \tr(\S_{BB}).
%\end{equation}
% \nb{In fact, we could pick an arbitrary $\Asel\in \bbB_k$.}
As we will show later, $\bbB_k$ is non-empty with high probability; See \prettyref{lem:inclusion} in \prettyref{sec:pf-lmm}.
The set $\Asel$ represents the collection of selected features, which turns out to be instrumental in constructing optimal estimators for the three objects we are interested in: 
the covariance matrix, 
% b) the precision matrix. 
the rank of the spiked model, and
the principle subspace.
The estimator $\Asel$ of the support set $A$ is obtained through searching over all subsets of size $k$. Such a global search, though computationally expensive, appears to be necessary in order for our procedure to optimally estimate $\bfSigma$ and $\V$ under the spectral norm. For example, estimating row-by-row is not guaranteed to always yield optimal results.  Whether there exist computationally efficient procedures attaining the optimal rate is currently unknown. See Section \ref{sec:discussion} for more discussions.

Given $\Asel$, the estimators for the above three objects are defined as follows. 
Recalling the notation in \prettyref{eq:AB}, we define the covariance matrix estimator as
\begin{equation}
	\wh\bfSigma = \S_{\Asel} \indc{\bbB_k \neq \emptyset} + \I_p \indc{\bbB_k = \emptyset},
	\label{eq:hSS}
\end{equation}
The estimator for the rank is
\begin{equation}
\label{eq:r-hat}
\wh{r} = \max\Big\{l: \sigma_l(\S_{\Asel \Asel}) \geq 1 + 
\gamma_2\,\sqrt{\opnorm{\Ssel}}\, 
\eta(k,n,p,\gamma_1) 
\Big\}.
\end{equation}
The appropriate value of $\gamma_2$ will be specified later in the statement of the theorems.
Last but not least,
the estimator for the principle subspace is $\sp(\hV)$, where
\begin{equation}
\label{eq:V-hat}
\hV = [\wh\v_1,\dots,\wh\v_{\hat{r}}]\in O(p,\wh{r})
\end{equation}
with $\wh\v_l$ the $l\Th$ eigenvector of $\wh\bfSigma$.
% denoted by $\hV \in O(p,\wh{r})$ which is formed by the $\wh{r}$ leading singular vectors of $\S_{\Asel}$.
When $\bbB_k = \emptyset$, we set $\wh{r} = 0$ and $\wh\V = 0$ since $\wh\bfSigma = \I_p$.
Note that the estimator $\wh\V$ is based on the estimated rank $\wh{r}$.
Whenever $\wh{r}\neq r$, the value of the loss function \eqref{eq:loss} equals $1$.

For a brief discussion on the comparison between the foregoing estimation procedure and that in \cite{Vu12space}, see \prettyref{sec:discussion}.
% \nr{It might be better to contrast the estimator here with the ones in Vu \& Lei and our other paper in the discussion section. We can basically copy and paste the part in the response to the discussion section.}

\section{Minimax Upper Bounds}
\label{sec:upper}

We now investigate the properties of the estimation procedure introduced in Section \ref{sec:procedure}. Rates of convergence for estimating the whole covariance matrix and its principal subspace under the spectral norm as well as for rank detection are established. The minimax lower bounds given in Section \ref{sec:lower} will show that these  rates are optimal and together they yield the minimax rates of convergence.

% In the rest of the paper, we assume
% \begin{equation}
% \label{eq:assumpt-rate}
% \frac{k}{n[h(\lambda)\wedge 1]}\log\frac{\eexp p}{k} \leq c_0 < 1.
% \end{equation}
% 
% \nb{Remark on the condition \prettyref{eq:assumpt-rate} and compared to the assumptions in our other paper.}

We begin with the estimation error of the covariance matrix estimator \eqref{eq:hSS}.
Note that here we do not require the ratio $\lambda_1/\lambda_r$ to be bounded.
% \nb{Argue that it is irrelevant for estimating the whole matrix?}

\begin{theorem}%[Estimation of spiked covariance matrices: upper bounds]
\label{thm:rate-Sigma}
Let $\wh\bfSigma$ be defined in \eqref{eq:hSS} with $\bbB_k$ given by \eqref{eq:supp-set} for some $\gamma_1 \geq 10$.
If $\frac{k}{n}\log\frac{\eexp p}{k}\leq c_0$ for a sufficiently small constant $c_0 > 0$, 
% Under condition \eqref{eq:assumpt-rate}, 
then %for any $1 \leq r \leq k$ and sufficiently large $n$, 
\begin{equation}
% \inf_{\hSS} 
\sup_{\bfSigma \in\Theta_1(k, p, r, \lambda)} \Ex \|\hSS -\bfSigma \|^2 
\lesssim
% \bigg[
\frac{(\lambda+1) k}{n} \log \frac{\eexp p}{k} + \frac{\lambda^2 r}{n},
% \bigg] \wedge \lambda^2,
\label{eq:rate-Sigma}
\end{equation}
where the parameter space $\Theta_1(k,p,r,\lambda)$ is defined in \eqref{para.space1}.
\end{theorem}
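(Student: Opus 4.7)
My plan centers on a signal/noise Gaussian decomposition of the data combined with a support-recovery argument for $\Asel$. Define the ``good event'' $\mathcal{E}$ on which $\bbB_k\neq\emptyset$ and $A:=\supp(\V)\subset \Asel$. Non-emptiness is ensured by \prettyref{lem:inclusion}: any size-$k$ superset $B^*$ of $A$ lies in $\bbB_k$ with high probability, because on $B^{*c}\subset A^c$ one has $\bfSigma_{DD}=\I_D$, so both $\norm{\S_D-\I}$ and $\norm{\S_{DB^*}}$ concentrate around zero by Davidson--Szarek and a Gaussian cross-covariance bound, union-bounded over the $\binom{p-k}{|D|}\leq(ep/|D|)^{|D|}$ candidate subsets. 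The choice $\gamma_1\geq 10$ in $\eta(\cdot)$ is calibrated to absorb exactly this union bound. Containment $A\subset \Asel$ holds on the same event because any missing index $i\in A\setminus \Asel$ would force some $D\subset \Asel^c$ with $\norm{\S_D-\I}$ at least of order $(\V\LL\V')_{ii}$, which exceeds $\eta(|D|,n,p,\gamma_1)$ unless the signal there is negligible (in which case the bias it induces is already of the target order). On $\mathcal{E}^c$, Cauchy--Schwarz together with $\Pr(\mathcal{E}^c)=O(p^{-C})$ and polynomial moment bounds on $\opnorm{\S}$ controls $\Ex[\|\hSS-\bfSigma\|^2\indc{\mathcal{E}^c}]$.

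On $\mathcal{E}$ we have $\bfSigma_{\Asel}=\bfSigma$, so $\hSS-\bfSigma$ equals $(\S-\bfSigma)_{\Asel\Asel}$ padded with zeros. To obtain the sharp rate $\lambda^2 r/n+(\lambda+1)k\log(\eexp p/k)/n$ rather than the naive $(\lambda+1)^2 k\log(\eexp p/k)/n$ that a direct sample-covariance concentration on an arbitrary $k\times k$ block would yield, I reparametrize each row as $\row{\X}{i}'=\V(\LL+\I_r)^{1/2}\bsu_i+\bfw_i$ with independent $\bsu_i\sim N(\bszero,\I_r)$ and $\bfw_i\sim N(\bszero,\I_p-\V\V')$. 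Writing $\hE_{uu}=\tfrac1n\sum \bsu_i\bsu_i'$, $\hE_{uw}=\tfrac1n\sum \bsu_i\bfw_i'$, and $\hE_{ww}=\tfrac1n\sum\bfw_i\bfw_i'$, this yields
\begin{equation*}
\S-\bfSigma=\V(\LL+\I_r)^{1/2}(\hE_{uu}-\I_r)(\LL+\I_r)^{1/2}\V'+\V(\LL+\I_r)^{1/2}\hE_{uw}+\hE_{uw}'(\LL+\I_r)^{1/2}\V'+(\hE_{ww}-(\I_p-\V\V')).
\end{equation*}

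The core of the argument is to bound each piece uniformly over the data-dependent $\Asel$. The standard $r$-dimensional sample-covariance bound gives $\|\hE_{uu}-\I_r\|\lesssim\sqrt{r/n}$, which contributes $(\lambda+1)^2 r/n$ after being sandwiched by $\opnorm{(\LL+\I_r)^{1/2}}^2=\lambda+1$. Because $\bsu$ and $\bfw$ are independent, $\hE_{uw}$ is a mean-zero Gaussian cross-covariance, and an $\epsilon$-net argument on $\bbS^{r-1}\times\bbS^{k-1}$ combined with the $\binom pk$-fold union bound (plus sub-exponential tails for products of independent Gaussians) yields $\max_{|B|=k}\|(\hE_{uw})_{*B}\|\lesssim\sqrt{k\log(\eexp p/k)/n}$; after multiplication by $\sqrt{\lambda+1}$ on the signal side this contributes $(\lambda+1)k\log(\eexp p/k)/n$. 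The same uniform-concentration scheme applied to the noise-only block gives $\max_{|B|=k}\|(\hE_{ww})_{BB}-(\I_p-\V\V')_{BB}\|\lesssim\sqrt{k\log(\eexp p/k)/n}$ and hence $k\log(\eexp p/k)/n$. Summing, squaring, and using $r\leq k$ to absorb the $(\lambda+1)r/n$ remainder into $(\lambda+1)k\log(\eexp p/k)/n$ yields the claimed bound.

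The step I expect to be hardest is the uniform control of the cross term $\max_{|B|=k}\|\V(\LL+\I_r)^{1/2}(\hE_{uw})_{*B}\|$: the independence of $\bsu$ and $\bfw$ is what makes this a mean-zero Gaussian chaos and replaces one power of $\sqrt{\lambda+1}$ by unity, turning the naive $(\lambda+1)^2 k\log(\eexp p/k)/n$ into the sharp $(\lambda+1)k\log(\eexp p/k)/n$. The hypothesis $\tfrac kn\log(\eexp p/k)\leq c_0$ plays a double role: it ensures $\eta(k,n,p,\gamma_1)=O(1)$ so that the quadratic terms in $\eta$ remain of lower order, and it makes $\Pr(\mathcal{E}^c)$ small enough that the crude bound there is absorbed into the stated rate.
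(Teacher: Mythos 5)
There is a genuine gap in the first half of your argument: the support-recovery claim. You define the good event $\mathcal{E}$ to include $A \subset \Asel$ and then use $\bfSigma_{\Asel} = \bfSigma$ to reduce everything to concentration of $(\S-\bfSigma)_{\Asel\Asel}$. But $A \subset \Asel$ does \emph{not} hold with high probability: if some rows of $\V$ have small norm, the corresponding coordinates of $A$ carry signal $(\V\LL\V')_{ii}$ well below $\eta(|D|,n,p,\gamma_1)$, so sets $B$ excluding them pass both tests in \eqref{eq:supp-set} and can be chosen as $\Asel$. Hence $\Pr(\mathcal{E}^c)$ is not $O(p^{-C})$, and your Cauchy--Schwarz treatment of $\mathcal{E}^c$ collapses. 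Your parenthetical fallback --- that the bias from missed coordinates ``is already of the target order'' --- is the actual crux and is not carried out; moreover, the index-by-index version you gesture at would fail quantitatively: bounding the missed block by summing per-coordinate contributions gives something like $k\sqrt{\log(\eexp p)/n}$ for $\norm{\bfSigma_{MM}-\I}$, which exceeds the target $\sqrt{(\lambda+1)\frac{k}{n}\log\frac{\eexp p}{k}}$ by a factor of order $\sqrt{k}$ when $\lambda \asymp 1$. The correct argument (\prettyref{lem:S-Asel-dev} in the paper) abandons support recovery entirely: it splits $\Asel$ against $A$ into the correctly identified, missed, and over-identified blocks $G, M, O$, and controls $\norm{\S_{MM}-\I}$ and $\norm{\S_{\Asel M}}$ \emph{as whole blocks} directly from the defining conditions of $\bbB_k$ in \eqref{eq:supp-set} --- in particular the cross-block condition $\norm{\S_{DB}} \leq 2\sqrt{\norm{\S_B}}\,\eta(k,n,p,\gamma_1)$, which you never invoke and which is responsible for the $\sqrt{\lambda+1}\,\eta$ scaling in \eqref{eq:Ssel}. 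This yields $\norm{\S_{\Asel}-\S_A} \lesssim \sqrt{\lambda+1}\,\eta(k,n,p,\gamma_1)$ regardless of whether $A$ is recovered, after which the triangle inequality reduces the problem to bounding $\Ex\norm{\S_A-\bfSigma}^2$ at the \emph{fixed} set $A$.

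The second half of your proposal --- the signal/noise reparametrization of $\S_A-\bfSigma$ into an $r$-dimensional Wishart term, a cross term, and a pure-noise term --- is essentially the paper's decomposition via the intermediate matrix $\S_0$ in \eqref{eq:S0}, and your accounting of the rates $\lambda^2 r/n$, $(\lambda+1)k/n$, and $k/n$ is correct (with the $(2\lambda+1)r/n$ remainder absorbed via $r\leq k$ as you note). One simplification becomes available once the first half is fixed: since the discrepancy between $\Asel$ and $A$ is handled separately by \prettyref{lem:S-Asel-dev}, you only need these concentration bounds at the fixed support $A$ (in expectation, via \prettyref{lmm:wishart-bd}), not uniformly over all $\binom{p}{k}$ subsets, so the $\epsilon$-net-plus-union-bound machinery for $\max_{|B|=k}$ is unnecessary.
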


% \nb{The upper bound proof does not have the term $\wedge \lambda^2$ covered!}
% \nb{To obtain $\wedge \lambda^2$, need to modify \prettyref{eq:hSS}!}
% \nb{Need some remarks on the structure of the upper bound.}
% It is notable that the upper bound in \eqref{eq:rate-Sigma} is the sum of two terms.
% The first term depends on all the parameters except for the number of spikes $r$, and it can be attributed to the uncertainty on the support of the leading eigenvectors.
% The second term depends on $r$ but not on the sparsity level $k$.
% As the proof of Theorem \ref{thm:rate-Sigma} will show, this term will be present even if we know $\sp(\V)$.
% Moreover, 
As we shall show later in \prettyref{thm:lb-Sigma-V}, the rates in \eqref{eq:rate-Sigma} are optimal with respect to all the model parameters, namely $k$, $p$, $r$ and $\lambda$.

\begin{remark}
Since the parameter space $\Theta_1$ in \eqref{para.space1} is contained in the set of $k$-sparse covariance matrices, it is of interest to compare the minimax rates for covariance matrix estimation in these two nested parameter spaces. 
For simplicity, only consider the case where the spectral norms of covariance matrices are uniformly bounded by a constant.
Cai and Zhou \cite{CZ12} showed that, under certain regularity conditions, the minimax rate of convergence for estimating $k$-sparse matrices is 	
$k^2 \frac{\log p}{n}$,
while the rate over $\Theta_1$ in \eqref{eq:minimax-rate-Sigma} reduces to $\frac{k}{n}\log\frac{\eexp p}{k}$ when the spectral norm of the matrix and hence $\lambda$ is bounded. 
Thus, ignoring the logarithmic terms, the rate over the smaller parameter space $\Theta_1$ is faster by a factor of $k$. 
This faster rate can be achieved because the group $k$-sparsity considered in our parameter space imposes much more structure than the row-wise $k$-sparsity does for the general $k$-sparse matrices. 
\end{remark}

% Note that the estimation procedure also yields directly an estimator of the precision matrix $\OO = \bfSigma^{-1}$. 
% Define the precision matrix estimator as
% \begin{equation}
% 	\wh\OO = (\hSS)^{-1}
% 	\label{eq:hOO}
% \end{equation}
% where $\hSS$ is defined in \eqref{eq:hSS}. It can be shown that $\wh\OO$ attains the following rate of convergence for estimating $\OO$ under the spectral norm,
% \[
% \frac{k}{n}\log\frac{\eexp p}{k} \wedge 1
% \]
% and this rate is optimal. See more discussions in Section \ref{sec:discussion}.

The next result concerns with the detection rate of the rank estimator \eqref{eq:r-hat} under the extra assumption that the ratio of the largest spike to the smallest, i.e., $\lambda_1/\lambda_r$, is bounded.

\begin{theorem}%[Optimal rank estimation of spiked model]
	\label{thm:rate-rank}
Let  $\hat{r}=\hat{r}(\gamma_1,\gamma_2)$ be defined in \prettyref{eq:r-hat} for some constants $\gamma_1 \geq 10$ and $\gamma_2 \geq 8\sqrt{\gamma_1}+34$. 
Assume that
\begin{equation}
	\label{eq:thm-rank-assumpt}
\frac{\tau k}{n} \log \frac{\eexp p}{k} \leq c_0 
\end{equation}
for a sufficiently small constant $c_0\in (0,1)$ which depends on $\gamma_1$.
If $\lambda \geq \beta \sqrt{\frac{k}{n} \log \frac{\eexp p}{k}}$ for some sufficiently large $\beta$ depending only on $\gamma_1,\gamma_2$ and $\tau$, then
\begin{equation}
\sup_{\bfSigma \in\Theta_2(k,p, \lambda,\tau)} \prob{\wh{r} \neq \rank(\LL)} \leq 
C  p^{1-\gamma_1/2}
 % C (\eexp p)^{1-\gamma_1/2},
% (13\eexp p+5)(\eexp p)^{-\gamma_1/2}.
\label{eq:rank-uppbd}
\end{equation}
where the parameter space $\Theta_2(k,p,\lambda,\tau)$ is defined in \eqref{para.space2}.
\end{theorem}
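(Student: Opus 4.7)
The plan is to reduce rank recovery to two events: one guaranteeing that the selected set $\Asel$ contains the true row support $A=\supp(\V)$, and one on which the spectrum of $\Ssel$ is accurately located relative to the population spectrum $\bfSigma_{\Asel\Asel}$. The failure probability $Cp^{1-\gamma_1/2}$ will come from the first event; the threshold in \eqref{eq:r-hat} will separate the signal eigenvalues from the noise eigenvalues on the second.

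First I would use the inclusion lemma (\prettyref{lem:inclusion}) to exhibit an event $\mathcal{E}_0$ with $\Pr(\mathcal{E}_0^c)\lesssim p^{1-\gamma_1/2}$ on which $\bbB_k$ is non-empty. I would supplement this with the observation that on the same event every $B\in\bbB_k$ satisfies $A\subseteq B$: if a signal coordinate were missed then taking $D=A\cap \comp{B}$ gives $\bfSigma_{DD}-\I\neq 0$, whose spectral norm exceeds $\eta(|D|,n,p,\gamma_1)$ under the magnitude assumption on $\lambda$, violating the defining constraint of $\bbB_k$. Hence $A\subseteq \Asel$ on $\mathcal{E}_0$, and because $\V_{\Asel *}$ retains the orthonormality of $\V$ we have $\bfSigma_{\Asel\Asel}=\V_{\Asel *}\LL\V_{\Asel *}'+\I_k$ with ordered eigenvalues $\lambda_1+1,\dots,\lambda_r+1,1,\dots,1$.

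Next I would obtain sharp spectral concentration for $\Ssel$ from the representation $\X_{*\Asel}=\U\LL^{1/2}\V_{\Asel *}'+\bsxi_{*\Asel}$, where $\U\in\reals^{n\times r}$ and $\bsxi\in\reals^{n\times p}$ are independent with i.i.d.\ $N(0,1)$ entries. This yields
\begin{equation*}
\Ssel=\V_{\Asel *}\LL^{1/2}\tfrac{1}{n}\U'\U\,\LL^{1/2}\V_{\Asel *}'+\tfrac{1}{n}\bsxi_{*\Asel}'\bsxi_{*\Asel}+\tfrac{1}{n}\bigl(\V_{\Asel *}\LL^{1/2}\U'\bsxi_{*\Asel}+\bsxi_{*\Asel}'\U\,\LL^{1/2}\V_{\Asel *}'\bigr).
\end{equation*}
Standard Gaussian concentration applied uniformly over $k$-subsets of $[p]$ (the union-bound cost $\log\binom{p}{k}\asymp k\log(ep/k)$ being absorbed into $\eta$) gives $\|\tfrac{1}{n}\bsxi_{*\Asel}'\bsxi_{*\Asel}-\I_k\|\leq\eta$, $\|\tfrac{1}{n}\U'\U-\I_r\|\leq\eta$, and $\|\tfrac{1}{n}\U'\bsxi_{*\Asel}\|\lesssim\eta$, all simultaneously with probability at least $1-Cp^{1-\gamma_1/2}$. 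Applying Weyl's interlacing inequality to this rank-$r$ plus bulk decomposition yields constants $c_1$ for which
\begin{equation*}
\sigma_{r+1}(\Ssel)\leq 1+c_1\sqrt{\lambda_1+1}\,\eta,\qquad \sigma_r(\Ssel)\geq \lambda_r+1-c_1\sqrt{\lambda_1+1}\,\eta,
\end{equation*}
and in particular $(\lambda_1+1)/2\leq \|\Ssel\|\leq 2(\lambda_1+1)$. Comparing these to the threshold $1+\gamma_2\sqrt{\|\Ssel\|}\,\eta$ of \eqref{eq:r-hat} is then routine: $\hat r\leq r$ holds once $\gamma_2\geq c_1\sqrt 2$, and $\hat r\geq r$ once $\lambda_r\gtrsim \sqrt{\lambda_1+1}\,\eta$. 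Since $\lambda_1\leq\tau\lambda_r\leq\tau\lambda$ and $\eta\asymp\sqrt{\gamma_1(k/n)\log(ep/k)}$, the latter is implied by $\lambda\geq\beta\sqrt{(k/n)\log(ep/k)}$ for $\beta$ chosen large enough in terms of $\gamma_1,\gamma_2,\tau$, while \prettyref{eq:thm-rank-assumpt} keeps $\eta$ bounded away from $1$.

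The main technical hurdle is the $\sqrt{\lambda_1+1}$ scaling (rather than $\lambda_1+1$) in the bound on $\sigma_{r+1}(\Ssel)-1$: a crude Weyl bound based on $\|\Ssel-\bfSigma_{\Asel\Asel}\|$ only delivers a $(\lambda_1+1)\eta$ deviation, which is incompatible with the $\sqrt{\|\Ssel\|}\,\eta$ scaling of the threshold when $\lambda_1$ is large. The improvement is extracted from the signal/noise/cross decomposition above, whose cross term has operator norm of order $\sqrt{\lambda_1}\,\eta$ rather than $\lambda_1\,\eta$; this quadratic-mean scaling is precisely what lets the adaptive threshold $\gamma_2\sqrt{\|\Ssel\|}\,\eta$ succeed without any a priori knowledge of the spike magnitudes.
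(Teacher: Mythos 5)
Your overall architecture --- decompose $\X_{*\Asel}$ into signal plus noise, exploit the fact that the cross term scales like $\sqrt{\lambda_1}\,\eta$ rather than $\lambda_1\eta$, and compare the resulting eigenvalue locations to the adaptive threshold $1+\gamma_2\sqrt{\opnorm{\Ssel}}\,\eta$ --- is the same as the paper's, and your closing arithmetic (the roles of $\gamma_2$, $\tau$, and $\beta$) is sound. But there is a genuine gap at the first step: the claim that $A\subseteq\Asel$ on a high-probability event is false. The parameter space only constrains the eigenvalues $\lambda_r\geq\lambda/\tau$ and the cardinality $|\supp(\V)|\leq k$; it places no lower bound on the norms of individual rows $\row{\V}{j}$. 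A coordinate $j\in A$ may carry arbitrarily little energy ($\|\row{\V}{j}\|=\epsilon$ with $\epsilon$ tiny), in which case $\norm{\bfSigma_{DD}-\I}$ for $D=A\cap\comp{B}$ is of order $\lambda\epsilon^2$ and does \emph{not} exceed $\eta(|D|,n,p,\gamma_1)$, so nothing forces $j$ into $\Asel$. Consequently $\V_{\Asel *}$ need not be orthonormal, $\bfSigma_{\Asel\Asel}$ need not have eigenvalues $\lambda_1+1,\dots,\lambda_r+1,1,\dots,1$, and --- this is where your argument actually breaks --- the lower bound $\sigma_r(\Ssel)\geq\lambda_r+1-c_1\sqrt{\lambda_1+1}\,\eta$ does not follow, since $\sigma_r(\V_{\Asel *}\LL\V_{\Asel *}')$ can be much smaller than $\lambda_r$ when signal mass sits on the missed set. (Your upper bound on $\sigma_{r+1}(\Ssel)$ survives, because the signal block has rank at most $r$ regardless of which rows are captured.)

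The repair is exactly what the paper's \prettyref{lem:S-Asel-dev} provides, and it is the reason $\bbB_k$ is defined the way it is: rather than guaranteeing $A\subseteq\Asel$, one partitions into $G=A\cap\Asel$, $M=A\cap\comp{\Asel}$, $O=\comp{A}\cap\Asel$ and uses the \emph{defining constraints} of $\bbB_k$ on the sample covariance --- $\norm{\S_{MM}-\I}\leq\eta$ and $\norm{\S_{\Asel M}}\leq 2\sqrt{\norm{\Ssel}}\,\eta$ hold because $M\subset\comp{\Asel}$ and $\Asel\in\bbB_k$ --- to conclude that whatever signal the missed coordinates carry is small \emph{in the sample}, giving $\norm{\Ssel-\S_A}\leq C_0(\gamma_1)\sqrt{\lambda_1+1}\,\eta$. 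One then compares $\S_A$ (for the true support $A$, where $\V_{A*}$ \emph{is} orthonormal) to the intermediate matrix $\S_0=\frac{1}{n}\V\D\U'\U\D\V'+\I_p$, whose $r\Th$ eigenvalue exceeds $1+\lambda_r/2$ and whose $(r+1)\Th$ eigenvalue equals $1$ exactly. With that substitution your threshold comparison goes through unchanged; without it, the step $\sigma_r(\Ssel)\geq\lambda_r+1-c_1\sqrt{\lambda_1+1}\,\eta$ is unsupported.
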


By \prettyref{thm:lb-rank} to be introduced later, the detection rate of $\sqrt{\frac{k}{n}\log\frac{\eexp p}{k}}$ is optimal.
For more details, see the discussion in \prettyref{sec:lower}.

Finally, we turn to the risk of the principal subspace estimator. 
As in \prettyref{thm:rate-rank}, we require $\lambda_1/\lambda_r$ to be bounded.

\begin{theorem}%[Minimax rates of principle subspace estimation]
		\label{thm:rate-V}
Suppose 
\begin{align}
\label{eq:n-p-lambda-assumpt}
M_1\log{p} \geq \log{n} \geq M_0\log\lambda
\end{align}
holds for some absolute constants $M_0, M_1 > 0$.
Let
$\wh\V$ be defined in \eqref{eq:V-hat} with constants $\gamma_1 \geq 10\vee (1+\frac{2}{M_0})M_1$ and $\gamma_2\geq 8\sqrt{\gamma_1}+34$ in \eqref{eq:supp-set} and \eqref{eq:r-hat}.
If \eqref{eq:thm-rank-assumpt} holds for a sufficiently small constant $c_0$ depending on $\gamma_1$, then
\begin{equation}
\sup_{\bfSigma\in\Theta_0(k,p,r,\lambda,\tau)} \Ex \|\hV\hV' -\V\V' \|^2 
\lesssim
 \frac{k (\lambda+1)}{n \lambda^2} \log \frac{\eexp p}{k} \wedge 1,
	\label{eq:rate-V}
\end{equation}
where the parameter space $\Theta_0(k,p,r,\lambda,\tau)$ is defined in \eqref{eq:para.space}.
\end{theorem}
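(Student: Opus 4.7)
The plan is to combine the Davis--Kahan sin-$\Theta$ theorem with the rank-consistency statement of \prettyref{thm:rate-rank} and the support-inclusion property built into the construction of $\bbB_k$. The trivial bound $\|\hV\hV'-\V\V'\|^2\le 1$ (both matrices being orthogonal projections) handles the ``$\wedge 1$'' part of \eqref{eq:rate-V}, so it suffices to establish the first factor under the additional assumption $\lambda\ge\beta\sqrt{(k/n)\log(\eexp p/k)}$ for a large enough $\beta$; otherwise the first factor already exceeds a constant multiple of $1$ and the trivial bound is sharp.

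Let $\calE$ be the intersection of the events (i) $\bbB_k\ne\emptyset$ and $A\subseteq\Asel$, (ii) $\wh{r}=r$, and (iii) a uniform concentration bound of the form $\|(\S_{BB}-\bfSigma_{BB})\V_B\|^2\le C(\lambda+1)\eta(k,n,p,\gamma_1)^2$ over every $k$-subset $B\supseteq A$. Event (ii) is furnished by \prettyref{thm:rate-rank} at probability $1-Cp^{1-\gamma_1/2}$; event (i) follows from the auxiliary inclusion lemma \prettyref{lem:inclusion} referenced in the paper together with Gaussian concentration, again at the same polynomial rate; and event (iii) is the key new ingredient discussed below. On $\calE^c$ we invoke $\|\hV\hV'-\V\V'\|^2\le 1$, so its contribution to the risk is $O(p^{1-\gamma_1/2})$, which the choice $\gamma_1\ge 10\vee(1+2/M_0)M_1$ together with $M_1\log p\ge\log n\ge M_0\log\lambda$ renders negligible compared to the target rate.

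On $\calE$ the estimator has the correct rank, so the loss equals the squared sine of the largest principal angle between $\sp(\V)$ and $\sp(\hV)$. The relevant eigen-gap of $\bfSigma$ between the $r$-th and $(r+1)$-th eigenvalues is $\lambda_r\ge\lambda/\tau$, and the sin-$\Theta$ theorem yields
\begin{equation*}
\|\hV\hV'-\V\V'\| \;\lesssim\; \frac{\|(\hSS-\bfSigma)\V\|}{\lambda_r}.
\end{equation*}
Since $A\subseteq\Asel$ on $\calE$, the rows of $\bfSigma$ outside $\Asel$ are diagonal and $\bfSigma$ coincides with its block-restriction $\bfSigma_{\Asel}$, so $(\hSS-\bfSigma)\V=(\S_{\Asel\Asel}-\bfSigma_{\Asel\Asel})\V_{\Asel}$, reducing the proof to the uniform concentration claim (iii).

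The crucial calculation is to show $\|(\S_{BB}-\bfSigma_{BB})\V_B\|^2\lesssim(\lambda+1)\tfrac{k}{n}\log\tfrac{\eexp p}{k}$ uniformly over $k$-subsets $B\supseteq A$, with the \emph{linear} dependence on $\lambda+1$ rather than the quadratic dependence that would follow from $\|\S_{BB}-\bfSigma_{BB}\|\cdot\|\V_B\|$. To get the sharp scaling I would use the factor-model representation $\row{\X}{i}_B=\V_B\LL^{1/2}\bsxi_i+\bfW_i$ with $\bsxi_i\sim N(\bszero,\I_r)$ and $\bfW_i\sim N(\bszero,\I_B)$ independent, and split $(\S_{BB}-\bfSigma_{BB})\V_B$ into three pieces: a within-subspace term $\V_B\LL^{1/2}(\tfrac{1}{n}\sum_i\bsxi_i\bsxi_i'-\I_r)\LL^{1/2}$ supported in $\sp(\V_B)$, a pure-noise term $(\tfrac{1}{n}\sum_i\bfW_i\bfW_i'-\I_B)\V_B$ of order $\sqrt{k/n}$, and two cross terms of order $\sqrt{\lambda k/n}$. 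Bounding each piece by Gaussian concentration (for the cross terms) or chi-square tails (for the quadratic pieces), and then taking a union bound over the $\binom{p}{k}$ subsets whose logarithm $k\log(\eexp p/k)$ is absorbed into $\gamma_1$, yields the required uniform bound; substituting back produces $\|\hV\hV'-\V\V'\|^2\lesssim(\lambda+1)k\log(\eexp p/k)/(n\lambda^2)$ on $\calE$, matching \eqref{eq:rate-V}. The main obstacle I anticipate is precisely this linear-in-$\lambda$ concentration: the within-subspace piece has operator norm of order $\lambda\sqrt{r/n}$ and would inflate the bound by the $\lambda^2 r/n$ term that appears in \prettyref{thm:rate-Sigma} but is absent from \eqref{eq:rate-V}. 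This is circumvented either by invoking the sharper sin-$\Theta$ bound involving $(\I-\V\V')\hSS\V$, which annihilates the in-subspace piece, or by directly peeling off the signal term from $(\S_{BB}-\bfSigma_{BB})\V_B$ before taking norms.
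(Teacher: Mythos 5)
Your overall architecture matches the paper's: trivial bound for the $\wedge 1$ part, a high-probability event combining support selection and rank consistency, a sin-$\Theta$ argument, and --- crucially --- you correctly identify that the naive perturbation $\|\hSS-\bfSigma\|$ would drag in the $\lambda^2 r/n$ term and must be avoided by killing the within-subspace fluctuation. Your ``Option B'' (peeling off the signal term) is exactly what the paper does via the intermediate matrix $\S_0 = \frac{1}{n}\V\D\U'\U\D\V'+\I_p$ of \eqref{eq:S0}: $\V\V'$ is also the leading invariant subspace of $\S_0$ (\prettyref{lem:S0}), and $\S_A-\S_0$ contains only the cross and pure-noise terms, so \prettyref{prop:SS-V} is applied to the pair $(\hSS,\S_0)$ with gap $\lambda_r/2$.

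However, there is a genuine gap in your event (i): you assume $A\subseteq\Asel$ holds with probability $1-O(p^{1-\gamma_1/2})$, citing \prettyref{lem:inclusion}. That lemma only shows $A\in\bbB_k$ with high probability, i.e.\ that $\bbB_k$ is nonempty; $\Asel$ is an \emph{arbitrary} element of $\bbB_k$ and need not contain $A$. Indeed no procedure can guarantee $A\subseteq\Asel$: if some rows of $\V$ have tiny but nonzero norm, a set $B$ obtained from $A$ by swapping such a coordinate for a null one also lies in $\bbB_k$ with high probability, so $\Pr(A\not\subseteq\Asel)$ can be of constant order, and your bound on the $\calE^c$ contribution collapses to $O(1)$. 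Consequently the reduction ``$(\hSS-\bfSigma)\V=(\S_{\Asel\Asel}-\bfSigma_{\Asel\Asel})\V_{\Asel}$'' is unjustified. The paper's fix is \prettyref{lem:S-Asel-dev}: it splits $\Asel$ against $A$ into correctly identified, missed, and overly identified coordinates and controls the missed block \emph{not} by claiming it is empty but by invoking the defining deviation inequalities of $\bbB_k$ in \eqref{eq:supp-set} (any $D\subset\comp{\Asel}$ satisfies $\norm{\S_D-\I}\le\eta$ and $\norm{\S_{D\Asel}}\le 2\sqrt{\norm{\S_{\Asel}}}\,\eta$), yielding $\norm{\S_{\Asel}-\S_A}\lesssim\sqrt{\lambda_1+1}\,\eta(k,n,p,\gamma_1)$ deterministically on the selection event. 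You need this lemma (or an equivalent argument) in place of the support-inclusion claim; with it, the rest of your outline goes through.
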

\begin{remark}
To ensure that the choice of $\gamma_1$ for achieving \eqref{eq:rate-V} is data-driven, we only need an under-estimate for $M_0 = \log{n}/\log{\lambda}$, or equivalently an over-estimate for $\lambda$.
(Note that $M_1 = \log{n}/\log{p}$ can be obtained directly given the data matrix.)
To this end, we first estimate $\bfSigma$ by $\hSS$ in \eqref{eq:hSS} with an initial $\gamma_1 = 10$ in \eqref{eq:supp-set}.
Then we control $\lambda$ by $2\opnorm{\hSS}-1$. By the proof of \prettyref{thm:rate-rank}, and in particular \eqref{eq:Ssel-norm}, this is an over-estimate of $\lambda$ with high probability. 
The upper bound in \eqref{eq:rate-V} remains valid if we compute $\wh{\V}$ with a (possibly) new $\gamma_1 = 10\vee (1+2/\wh{M_0})M_1$ in \eqref{eq:supp-set}, where $\wh{M_0} = \log{n}/ \log(2\opnorm{\hSS}-1)$.
\end{remark}

It is worth noting that the rate in \eqref{eq:rate-V} does not depend on $r$, and is optimal, by the lower bound given in \prettyref{thm:lb-Sigma-V} later.
\iffalse
Ignoring the trivial upper bound of $1$,
this rate is different from the optimal rate obtained in \cite{CMW12} for estimating $\sp(\V)$ under the squared Frobenius loss $\fnorm{\hV\hV'-\V\V'}^2$, where an extra term of $\frac{\lambda+1}{n\lambda^2}r(k-r)$ is present.
However, when $r \lesssim \log\frac{\eexp p}{k}$, the extra term does not take any effect in the rate, and the rates under these two different losses agree with each other. This fact has an important impact on the computation of the rate optimal estimator. See \prettyref{sec:discussion} for further discussions. 
\fi

The problems of estimating $\bfSigma$ and $\V$ are clearly related, but they are also distinct from each other. To discuss their relationship, we first note the following result (proved in \prettyref{app:SS-V}) which is a variation of the well-known sin-theta theorem \cite{Davis70}:
%relationship between estimating the covariance matrix and estimating the eigenspaces under the operator norm:
\begin{prop}
\label{prop:SS-V}
Let $\bfSigma$ and $\hSS$ be $p \times p$ symmetric matrices. Let $r \in [p-1]$ be arbitrary and let $\V,\hV \in O(p,r)$ be formed by the $r$ leading singular vectors of $\bfSigma$ and $\hSS$, respectively. Then
\begin{equation}
	\norm{\hSS-\bfSigma} \geq \frac{1}{2} (\sigma_r(\bfSigma)- \sigma_{r+1}(\bfSigma)) \norm{\hV\hV'-\V\V'}.
	\label{eq:SS-V}
\end{equation}
\end{prop}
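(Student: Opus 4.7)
The plan is to prove the inequality by a two-case dichotomy based on whether the perturbation $\|\hSS - \bfSigma\|$ is comparable to the eigengap $\delta := \sigma_r(\bfSigma) - \sigma_{r+1}(\bfSigma)$, or much smaller than it. The case $\delta = 0$ is trivial, so assume $\delta > 0$.

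\textbf{Case 1: large perturbation, $\|\hSS - \bfSigma\| \geq \delta/2$.} Here I would use the elementary fact that the difference of two rank-$r$ orthogonal projections has spectral norm at most $1$ (its singular values come in $\pm\sin\theta_i$ pairs where $\theta_i$ are the canonical angles between the subspaces). Consequently,
\[
\norm{\hSS - \bfSigma} \;\geq\; \tfrac{1}{2}\delta \;\geq\; \tfrac{1}{2}\delta\,\norm{\hV\hV' - \V\V'},
\]
which is the desired inequality in this regime.

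\textbf{Case 2: small perturbation, $\|\hSS - \bfSigma\| < \delta/2$.} Here I would invoke Weyl's inequality to control the ``asymmetric gap'' appearing in the classical Davis--Kahan $\sin\Theta$ theorem. By Weyl, $\sigma_{r+1}(\hSS) \leq \sigma_{r+1}(\bfSigma) + \|\hSS - \bfSigma\| < \sigma_{r+1}(\bfSigma) + \delta/2$, so
\[
\sigma_r(\bfSigma) - \sigma_{r+1}(\hSS) \;>\; \tfrac{1}{2}\bigl(\sigma_r(\bfSigma) - \sigma_{r+1}(\bfSigma)\bigr) \;=\; \tfrac{1}{2}\delta \;>\;0.
\]
Now apply the standard Davis--Kahan $\sin\Theta$ theorem in the form
\[
\norm{\hV\hV' - \V\V'} \;\leq\; \frac{\norm{\hSS - \bfSigma}}{\sigma_r(\bfSigma) - \sigma_{r+1}(\hSS)},
\]
valid because the top-$r$ eigenvalues of $\bfSigma$ all lie above $\sigma_r(\bfSigma)$ while the bottom $p-r$ eigenvalues of $\hSS$ all lie at or below $\sigma_{r+1}(\hSS)$, separated by a positive gap. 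Combining with the lower bound on this gap yields
\[
\norm{\hV\hV' - \V\V'} \;\leq\; \frac{2\,\norm{\hSS-\bfSigma}}{\delta},
\]
which rearranges to the claim.

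The two cases together cover all possibilities, and no step is delicate: the only real observation is that by splitting on the size of the perturbation relative to $\delta/2$, one can always replace the ``asymmetric'' Davis--Kahan denominator $\sigma_r(\bfSigma) - \sigma_{r+1}(\hSS)$ with the intrinsic gap $\delta$ at the cost of a factor of $2$. There is no substantive obstacle; the proof is essentially a bookkeeping argument around the standard $\sin\Theta$ theorem, and the only point to watch is that Case 1 must use the universal bound $\norm{\hV\hV'-\V\V'} \leq 1$ rather than Davis--Kahan, since in that regime the denominator in Davis--Kahan could fail to be positive.
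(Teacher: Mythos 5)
Your proof is correct and follows essentially the same route as the paper's: a two-case dichotomy with threshold $\tfrac{1}{2}(\sigma_r(\bfSigma)-\sigma_{r+1}(\bfSigma))$, using the trivial bound $\norm{\hV\hV'-\V\V'}\leq 1$ in the large-perturbation case and a Weyl-plus-$\sin\Theta$ argument in the small-perturbation case. The only cosmetic difference is that you split on the size of $\norm{\hSS-\bfSigma}$ (and bound the gap $\sigma_r(\bfSigma)-\sigma_{r+1}(\hSS)$), whereas the paper splits on the location of $\sigma_r(\hSS)$ relative to the midpoint (and bounds the gap $\sigma_r(\hSS)-\sigma_{r+1}(\bfSigma)$); both are valid forms of the Davis--Kahan theorem and yield the same constant.
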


In view of \prettyref{prop:SS-V}, the minimax risks for estimating the spiked covariance matrix $\bfSigma$ and the principle subspace $\V$ under the spectral norm can be tied as follows:
\begin{equation}
{\black \inf_{\hSS} \sup_{\bfSigma \in\Theta} \Ex \|\hSS -\bfSigma \|^2  \gtrsim
\lambda_r^2 \inf_{\hV} \sup_{\bfSigma \in\Theta} \Ex \|\hV\hV' -\V\V' \|^2 } ,
\label{eq:SS-V-1}	
\end{equation}
where $\Theta=\Theta_0(k,p,r,\lambda,\tau)$.

The results of Theorems \ref{thm:rate-Sigma} and \ref{thm:rate-V} suggest, however, that the above inequality is not tight when $\lambda$ is large.
%	it is interesting to note that 
%	More precisely, note that the minimax rate in \prettyref{eq:minimax-Sigma} can be equivalently written as
%			\begin{equation}
%\inf_{\hSS} \sup_{\bfSigma \in\Theta} \Ex \|\hSS -\bfSigma \|^2 
%\asymp 
%\lambda^2  \pth{\pth{ \frac{k}{n h(\lambda)} \log \frac{e p}{k} + \frac{r}{n}}  \wedge 1}.
%\end{equation}
%\nb{Explain the two terms: oracle + rank-one.}
The optimal rate for estimating $\V$ is not equivalent to that for estimating $\bfSigma$ divided by $\lambda^2$ when $\frac{\lambda+1}{\lambda^2} \log \frac{\eexp p}{k} \ll \frac{r}{k}$. 
Consequently, \prettyref{thm:rate-V} cannot be directly deduced from \prettyref{thm:rate-Sigma} but requires a different analysis by introducing an intermediate matrix $\S_0$ defined later in \eqref{eq:S0}. 
{\black This is because the estimation of $\bfSigma$ needs to take into account the extra error in estimating the eigenvalues in addition to those in estimating $\V$.}
On the other hand, in proving \prettyref{thm:rate-Sigma} we need to contend with the difficulty that the loss function is unbounded.
% \nb{add more detailed discussion here!}

%!TEX root = /Users/zongming/Documents/Dropbox/Tony-Zongming-Yihong/SPCA-opnorm/SPCA_opnorm_sub.tex

\section{Minimax Lower Bounds and Optimal Rates of Convergence}
\label{sec:lower} 

In this section we derive minimax lower bounds for estimating the spiked covariance matrix $\bfSigma$ and the principal subspace $\sp(\V)$  as well as for the rank detection. These lower bounds hold for all parameters and are non-asymptotic.
The lower bounds together with the upper bounds given in Section \ref{sec:upper} establish the optimal rates of convergence for the three problems.

The technical analysis heavily relies on a careful study of a  rank-one testing problem  and analyzing the moment generating function of a squared symmetric random walk stopped at a hypergeometrically distributed time. This lower bound technique is of independent interest and can be useful for other related matrix estimation and testing problems.
 
%\nr{Yihong, how about the following arrangement? If you don't like it, please revert back to the file lower-bak.tex.}

\subsection{Lower bounds and minimax rates for matrix and subspace estimation}

We first consider the lower bounds  for estimating the spiked covariance matrix $\bfSigma$ and the principal subspace $\V$ under the spectral norm.

\begin{theorem}
%Let $\Theta = \Theta_0(k,p,r,\lambda,\tau)$. 
%For any $1\leq r\leq k\leq \frac{p-1}{2}$ and $n \in \naturals$,
For any $1\leq r\leq k\leq p$ and $n \in \naturals$,
\begin{equation}
\label{eq:lb-Sigma}
\inf_{\widetilde{\bfSigma}} \sup_{\bfSigma \in \Theta_1(k,p,r,\lambda)} \Ex \|\widetilde{\bfSigma}-\bfSigma \|^2 
\gtrsim
 \pth{\frac{(\lambda+1) k}{n} \log \frac{\eexp p}{k} + \frac{\lambda^2 r}{n}}  \wedge \lambda^2,
\end{equation}
and
\begin{equation}
\label{eq:lb-V}
\inf_{\widetilde{\V}} \sup_{\bfSigma\in \Theta_0(k,p,r,\lambda,\tau)} \Ex \|\widetilde{\V}\widetilde{\V}' -\V\V' \|^2 
\gtrsim
 \frac{(\lambda+1)(k-r)}{n \lambda^2} \log \frac{\eexp (p-r)}{k-r} \wedge 1,
\end{equation}
where the parameter spaces $\Theta_1(k,p,r,\lambda)$ and $\Theta_0(k,p,r,\lambda,\tau)$ are defined in \eqref{para.space1} and \eqref{eq:para.space}, respectively.
\label{thm:lb-Sigma-V}
\end{theorem}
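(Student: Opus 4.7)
The plan is to derive both lower bounds as the maximum of three partial lower bounds, each obtained from an explicit sub-family of the parameter space. The first partial bound, which simultaneously delivers \eqref{eq:lb-V} in full and the $\frac{(\lambda+1)k}{n}\log\frac{\eexp p}{k}$ term of \eqref{eq:lb-Sigma}, is built around an ``essentially rank-one'' sub-family: fix $r-1$ spike directions to coordinate vectors $\bfe_1,\dots,\bfe_{r-1}$ and parameterise the last spike direction as $\v_\w = \sqrt{1-\epsilon^2}\,\bfe_r + \epsilon \w$, where $\w$ ranges over $\{0,\pm 1/\sqrt{k-r}\}$-valued vectors supported on $k-r$ indices of $\{r+1,\dots,p\}$ (so that the full $\V$ has row support of size exactly $k$). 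A Varshamov--Gilbert packing delivers $M$ such codewords with $\log M \gtrsim (k-r)\log\frac{\eexp(p-r)}{k-r}$ and pairwise $\ell_2$-distance $\|\w_i - \w_j\| \gtrsim 1$; direct computation then gives $\|\V_i\V_i' - \V_j\V_j'\|^2 = 1 - \langle\v_{\w_i},\v_{\w_j}\rangle^2 \asymp \epsilon^2$, while Sherman--Morrison applied to $\bfSigma_j^{-1} = (\I + \lambda\V_j\V_j')^{-1}$ yields
\[
\mathrm{KL}\!\left(N(\bszero,\bfSigma_i)^{\otimes n} \,\|\, N(\bszero,\bfSigma_j)^{\otimes n}\right) \lesssim \frac{n\lambda^2}{1+\lambda}\, \fnorm{\v_{\w_i}\v_{\w_i}' - \v_{\w_j}\v_{\w_j}'}^2 \lesssim \frac{n\lambda^2}{1+\lambda}\,\epsilon^2.
\]
Calibrating $\epsilon^2 \asymp \frac{1+\lambda}{n\lambda^2}\log\frac{\eexp(p-r)}{k-r}$ (capped at $1$), generalized Fano's inequality then yields \eqref{eq:lb-V}. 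Specialising to $r = 1$ and using $\|\bfSigma_i - \bfSigma_j\|^2 = \lambda^2 \|\v_i\v_i' - \v_j\v_j'\|^2$ delivers the sparse term in \eqref{eq:lb-Sigma}.

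The second partial bound handles the eigenvalue term $\lambda^2 r/n$ of \eqref{eq:lb-Sigma}: fix $\V = [\bfe_1,\dots,\bfe_r]$ and perturb the eigenvalues on a sign hypercube $\LL(\bst) = \mathrm{diag}(\lambda/2 + \delta t_i)$, $\bst \in \{\pm 1\}^r$, with $\delta \asymp \lambda/\sqrt n$; adjacent vertices have spectral-norm loss $4\delta^2$ and one-sample KL of order $\delta^2/\lambda^2$, and Assouad's lemma delivers the desired rate. The third partial bound is the truncation at $\lambda^2$: the two-point pair $\bfSigma_0 = \I$ and $\bfSigma_1 = \I + \lambda\bfe_1\bfe_1'$ has spectral loss $\lambda^2$ and per-sample KL $\frac{1}{2}(\lambda - \log(1+\lambda)) = O(\lambda^2 \wedge \lambda)$, so Le~Cam's two-point lemma yields a constant-times-$\lambda^2$ lower bound in exactly the regime $n(\lambda \wedge \lambda^2) = O(1)$ where the first partial bound does not already dominate.

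The principal obstacle is the KL/Fano bookkeeping in the first partial bound. The pairwise-KL argument above is tight only when the Varshamov--Gilbert packing size $M$ has $\log M$ strictly exceeding the maximum pairwise KL by a constant; in the regime $k \gg \sqrt{p}$ this becomes loose, which is precisely the gap left open in \cite{BR12}. To close it, I will upgrade to a $\chi^2$-divergence computation for the mixture $\bar P = M^{-1}\sum_i N(\bszero,\bfSigma_i)^{\otimes n}$ against the reference $N(\bszero,\I)^{\otimes n}$. Expanding $\chi^2(\bar P, P_0)$ produces a moment generating function of the overlap $\langle \v_\w, \v_{\w'}\rangle^2$ evaluated at an exponent proportional to $n$, and if the supports of $\w,\w'$ are drawn uniformly from $\binom{\{r+1,\dots,p\}}{k-r}$ then $|\supp(\w) \cap \supp(\w')|$ is hypergeometrically distributed. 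Bounding this MGF is exactly the content of the hypergeometric stopped random-walk lemma referenced earlier in the paper, whose invocation makes the lower bound tight uniformly in $(k,p)$.
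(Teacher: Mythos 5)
Your first and third partial bounds are sound in outline and in fact match the paper's treatment of the sparse term: the paper simply invokes the known rank-one singular-vector lower bounds (via \prettyref{prop:SS-V} and the results of Vu--Lei and Birnbaum et al.) rather than re-running the Varshamov--Gilbert/Fano argument, and the $\wedge\,\lambda^2$ truncation of that term comes out of the same citation, so your separate two-point bound is redundant but harmless. Two remarks on this part. First, your calibration $\epsilon^2\asymp\frac{1+\lambda}{n\lambda^2}\log\frac{\eexp(p-r)}{k-r}$ drops a factor of $k-r$: the Fano budget is $\log M\asymp(k-r)\log\frac{\eexp(p-r)}{k-r}$ against a pairwise KL of order $\frac{n\lambda^2}{1+\lambda}\epsilon^2$, so the correct scale is $\epsilon^2\asymp\frac{(1+\lambda)(k-r)}{n\lambda^2}\log\frac{\eexp(p-r)}{k-r}\wedge 1$; as written your bound is short by a factor of $k-r$. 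Second, your closing worry that the pairwise-KL argument ``becomes loose when $k\gg\sqrt p$'' conflates estimation with detection: the gap in \cite{BR12} concerns the detection problem at full signal strength with a binary prior, whereas the local Fano argument for estimation is tight for all $k$, so no $\chi^2$/hypergeometric upgrade is needed for the sparse term of this theorem.

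The genuine gap is your second partial bound. Assouad's lemma on the diagonal hypercube $\LL(\bst)=\diag(\lambda/2+\delta t_i)$ cannot produce $\lambda^2 r/n$ under the squared spectral norm, because this loss is not separable over the hypercube coordinates: for any two sign vectors $\bst\neq\bst'$ one has $\|\bfSigma(\bst)-\bfSigma(\bst')\|^2=4\delta^2$ \emph{independently of the Hamming distance}, so the loss dominates $\alpha\,\rho_H(\bst,\bst')$ only for $\alpha\leq 4\delta^2/r$, and Assouad returns at best $4\delta^2\asymp\lambda^2/n$ --- a factor of $r$ short of the target. (The construction would work for the squared Frobenius norm, which is additive over coordinates, but not here; and no purely diagonal two-point construction does better, since making $r$ coordinates simultaneously detectable forces $\delta^2\lesssim (1+\lambda)^2/(nr)$.) The paper obtains the $\lambda^2(1\wedge\frac{r}{n})$ term by a different device: a composite two-point test between $(1+\frac{\lambda}{2})\I_r$ and $(1+\frac{\lambda}{2})(\I_r+\rho\v\v')$ with $\rho\asymp\lambda(1\wedge\sqrt{r/n})$ and $\v$ drawn uniformly from $\frac{1}{\sqrt r}\{\pm1\}^r$. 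There the spectral separation $\asymp\lambda\sqrt{r/n}$ is achieved in one shot by a rank-one perturbation along an unknown direction, and indistinguishability is established via the $\chi^2$-mixture identity of \prettyref{lmm:chi2.cov} together with \prettyref{lmm:rm} applied in the non-sparse case $p=k=r$. In other words, the hypergeometric random-walk lemma you reserve for the sparse term is precisely the ingredient you are missing for the oracle term.
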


To better understand the lower bound \eqref{eq:lb-Sigma}, it is helpful to write it equivalently  as 
\[
\pth{\frac{(\lambda+1) k}{n} \log \frac{\eexp p}{k}}  \wedge \lambda^2 + \frac{\lambda^2 r}{n},  %\wedge \lambda^2,
\]
which can be proved by showing that the minimax risk is lower bounded by each of these two terms. The first term does not depend on $r$ and is the same as the lower bound in the rank-one case. The second term is the oracle risk when the true support of $\V$ is known. The key to the proof is the analysis of the rank-one case which will be discussed in more detail in Section \ref{sec:rank-one}. The proof of \eqref{eq:lb-V} is relatively straightforward by using known results on rank-one estimation.

In view of the upper bounds given in Theorems \ref{thm:rate-Sigma} and \ref{thm:rate-V} and the lower bounds given in Theorem \ref{thm:lb-Sigma-V}, we establish the following minimax rates of convergence for estimating the spiked covariance matrix $\bfSigma$ and the principal subspace $\V$, subject to the constraints on the parameters given in Theorems \ref{thm:rate-Sigma} and \ref{thm:rate-V}:
%\ref{thm:rate-Sigma}, \ref{thm:rate-V}, and \ref{thm:lb-Sigma-V},
\begin{eqnarray}
\inf_{\widetilde{\bfSigma}} \sup_{\bfSigma \in \Theta_1(k,p, r, \lambda,\tau)} \Ex \|\widetilde{\bfSigma}-\bfSigma \|^2 
&\asymp&
 \pth{\frac{(\lambda+1) k}{n} \log \frac{\eexp p}{k} + \frac{\lambda^2 r}{n}}  \wedge \lambda^2,
 \label{eq:minimax-Sigma}\\
\inf_{\widetilde{\V}} \sup_{\bfSigma\in\Theta_0(k,p, r, \lambda,\tau)} \Ex \|\widetilde{\V}\widetilde{\V}' -\V\V' \|^2
&\asymp&
 \frac{(\lambda+1) k}{n \lambda^2} \log \frac{\eexp p}{k} \wedge 1,
  \label{eq:minimax-V}
\end{eqnarray}
where \prettyref{eq:minimax-V} holds under the addition condition that $k -r \gtrsim k$. 
%\nr{need conditions for \prettyref{eq:minimax-V} to hold!}
Therefore, the estimators of $\bfSigma$ and $\V$ given in Section \ref{sec:procedure} are rate optimal. 
In \eqref{eq:minimax-Sigma}, the trivial upper bound of $\lambda^2$ can always be achieved by using the identity matrix as the estimator.

\subsection{Lower bound and minimax rate  for rank detection}

We now turn to the lower bound and minimax rate  for the rank detection problem.

\begin{theorem}
\label{thm:lb-rank}
%Let $\Theta = \cup_{r = 0}^k \Theta_0(k,p,r,\lambda,\tau)$.
Let $\beta_0 < \frac{1}{36}$ be a constant. 
For any $1\leq r\leq k\leq p$ and $n \in \naturals$,
 if $\lambda \leq  1 \wedge \sqrt{\frac{\beta_0 k}{n} \log \frac{e p}{k}}$, then
	\begin{equation}
%	\sup_{r \in [k]} 
	\inf_{\tilde{r}} \sup_{\bfSigma \in \Theta_2(k,p,\lambda,\tau)} \prob{\tilde{r} \neq \rank(\bfSigma)} \geq w(\beta_0),
%	...\text{a function of $\lambda,k,p,n, p^{1-\gamma_1/2}$}
	\label{eq:rank-error}
\end{equation}
%where the parameter space $\Theta_2(k,p,\lambda,\tau)$ is defined in \eqref{para.space2}.
%for some function $w: (0,\frac{1}{\rateconst}) \to (0,1)$ satisfying $w(0+)=1$. 
where the function $w: (0,\frac{1}{36}) \to (0,1)$ satisfies $w(0+)=1$. 
\end{theorem}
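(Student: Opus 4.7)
The plan is to reduce the rank detection problem to testing rank $0$ against a mixture of rank-one alternatives, and to show that even this simpler problem is nearly impossible in the claimed regime via a chi-squared analysis. Since $\I_p \in \Theta_2$ (taking $r = 0$) and every matrix of the form $\I_p + \lambda v v'$ with $\|v\|=1$ and $|\supp(v)|\leq k$ belongs to $\Theta_2$ (taking $r = 1$ with the single spike equal to $\lambda$), any rank estimator $\tilde r$ must in particular distinguish these two sub-families. Hence the minimax risk is lower bounded by the minimax testing error between the null $P_0=N(\bszero,\I_p)^{\otimes n}$ and any mixture of the rank-one alternatives $P_v = N(\bszero,\I_p+\lambda vv')^{\otimes n}$.

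For the prior I would take $v = k^{-1/2}\sum_{i \in S}\xi_i e_i$, where $S$ is uniform on the $k$-subsets of $[p]$ and $(\xi_i)_{i\in S}$ are i.i.d.\ Rademacher signs independent of $S$. A direct Gaussian calculation using the identity $(\I_p + \lambda vv')^{-1} = \I_p - \tfrac{\lambda}{1+\lambda}vv'$ and the quadratic Gaussian MGF $\Ex\exp(\tfrac12 Y'AY) = \det(\I - A)^{-1/2}$ for $Y \sim N(\bszero, \I)$ yields
\begin{equation*}
1 + \chi^2(P_\pi, P_0) = \Ex_{v, v'}\bigl[(1 - \lambda^2\iprod{v}{v'}^2)^{-n/2}\bigr],
\end{equation*}
where $P_\pi = \Ex_v P_v$ and $v, v'$ are i.i.d.\ copies drawn from $\pi$.

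The decisive combinatorial observation is that, conditional on the intersection size $H = |S \cap S'|$, which follows $\Hyper(p, k, k)$, we have $\iprod{v}{v'} = k^{-1}\sum_{i\in S\cap S'}\xi_i\xi_i' = k^{-1}W_H$, where $(W_j)_{j\geq 0}$ is a simple symmetric random walk independent of $H$. Substituting back, the chi-squared divergence becomes exactly the moment generating function of $W_H^2/k^2$ under the hypergeometric time change:
\begin{equation*}
1 + \chi^2(P_\pi, P_0) = \Ex\bigl[(1 - \lambda^2 W_H^2/k^2)^{-n/2}\bigr].
\end{equation*}
Invoking \prettyref{lmm:rm}, under the hypothesis $\lambda \leq 1 \wedge \sqrt{\beta_0(k/n)\log(ep/k)}$ this quantity is bounded by $1 + \epsilon(\beta_0)$ with $\epsilon(\beta_0) \to 0$ as $\beta_0 \to 0$.

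Combining with Le Cam's two-point inequality and the standard bound $\TV \leq \tfrac12\sqrt{\chi^2}$ then gives
\begin{equation*}
\inf_{\tilde r}\sup_{\bfSigma\in\Theta_2}\prob{\tilde r\neq \rank(\bfSigma)} \geq \tfrac12\bigl(1 - \TV(P_0, P_\pi)\bigr) \geq \tfrac12\bigl(1 - \tfrac12\sqrt{\epsilon(\beta_0)}\bigr),
\end{equation*}
which defines a valid $w(\beta_0)$ bounded away from $0$, with any sharper scaling of $w$ near $0$ being a straightforward consequence of the strength of the MGF bound. The main obstacle is precisely that MGF bound in \prettyref{lmm:rm}: for fixed $h$, $\Ex[\exp(sW_h^2)]$ is finite only when $s \lesssim 1/h$, so since $H$ can be as large as $k$ a uniform-in-$h$ bound fails; one must instead balance the super-exponential decay of the hypergeometric tail above its mean $k^2/p$ against the sub-Gaussian growth of $\exp(sW_h^2)$ in a regime-by-regime fashion, and it is exactly this balancing that pins down the optimal threshold $\lambda \asymp \sqrt{(k/n)\log(ep/k)}$ and resolves the gap left open in \cite{BR12}.
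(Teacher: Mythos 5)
Your proposal follows essentially the same route as the paper: reduce rank detection to testing $\I_p$ against a mixture of rank-one alternatives with the Rademacher-signed, uniformly supported $k$-sparse prior, express $1+\chi^2$ through the overlap $\iprod{v}{v'}=G_H/k$ with $H\sim\Hyper(p,k,k)$, and invoke \prettyref{lmm:rm} together with Le Cam; this is precisely the content of \prettyref{prop:rank01}, of which the theorem is a direct consequence. The only substantive difference is the final constant: passing from the sum of Type-I and Type-II errors to the worst-case single error costs you a factor $\tfrac{1}{2}$, so your $w$ satisfies $w(0+)=\tfrac{1}{2}$ rather than the claimed $w(0+)=1$ --- which is in fact the best possible for the sup-of-error formulation (for $k=1$ a fair coin flip between ranks $0$ and $1$ has worst-case error exactly $\tfrac{1}{2}$), whereas $w(0+)=1$ is the correct limit for the sum of the two error probabilities as bounded in \prettyref{prop:rank01}.
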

The upper and lower bounds given in Theorems  \ref{thm:rate-rank} and \ref{thm:lb-rank} show that the optimal detection boundary for the rank $r$ is $\sqrt{\frac{k}{n} \log \frac{e p}{k}}$. That is,  the rank $r$ can be estimated with an arbitrarily small error probability when $\lambda_r \geq \beta \sqrt{\frac{k}{n} \log \frac{e p}{k}}$ for a sufficiently large constant $\beta$, whereas this is impossible to achieve by any method if $\lambda_r \leq  \sqrt{\frac{\beta_0 k}{n} \log \frac{e p}{k}}$ for some small positive constant $\beta_0$. 
Note that \prettyref{thm:lb-rank} applies to the full range of sparsity including the non-sparse case $k=p$, which requires $\lambda_r \geq \sqrt{\frac{p}{n}}$. This observation turns out to be useful in proving the ``parametric term'' in the minimax lower bound for estimating $\bfSigma$ in \prettyref{thm:lb-Sigma-V}.

%We should compare this result with those in a recent paper by Berthet and Rigollet \cite{BR12} where the rank detection was considered in the special case of $r=1$.

The rank detection lower bound in \prettyref{thm:lb-rank} is in fact a direct consequence of the next proposition concerning testing the identity covariance matrix against rank-one alternatives, 
	\begin{equation}
H_{0}:  \bfSigma = \I_p, \quad \mbox{versus}\quad
H_{1}:  \bfSigma = \I_p + \lambda \v \v', \quad \v \in \bbS^{p-1} \cap B_0(k),
	\label{eq:HT.rank01}
\end{equation}
where $B_0(k) \triangleq \{\x \in \reals^p: |\supp(\x)| \leq k\}$. Note that $\bfSigma$ is in the parameter space $\Theta_2$ under both the null and the alternative hypotheses. The rank-one testing problem \eqref{eq:HT.rank01} has been studied in \cite{BR12}, where there is a gap between the lower and upper bounds when $k\gtrsim \sqrt{p}$. The following result show that their lower bound is in fact sub-optimal in this case. We shall give below a dimension-free lower bound for the optimal probability of error and determine the optimal rate of separation. The proof is deferred to \prettyref{sec:pf-lower}.
\begin{prop}
\label{prop:rank01}
%There exists universal constants $a,A > 0$ such that the following holds: Let $k \in [p]$. 
Let $\beta_0 < \frac{1}{36}$ be a constant. 
Let $\X$ be an $n \times p$ random matrix whose rows are independently drawn from $N(0,\bfSigma)$. 
For any $k \in [p]$ and $n \in \naturals$, if $\lambda \leq 1 \wedge \sqrt{\frac{\beta_0 k}{n} \log \frac{\eexp p}{k}}$, the minimax sum of Type-I and Type-II error probabilities for the testing problem \eqref{eq:HT.rank01} satisfies
\[
\calE_n(k,p,\lambda) \triangleq
\inf_{\phi: \reals^{n \times p} \to \{0,1\}} \pth{ \Prob_{\I_p}\{\phi(\X)=1\} + \sup_{\bfSigma \in H_1} \Prob_{\bfSigma}\{\phi(\X)=0\} }  \geq w(\beta_0)
\]
where the function $w: (0,\frac{1}{36}) \to (0,1)$ satisfies $w(0+)=1$. 
\end{prop}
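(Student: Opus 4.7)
The plan is to apply Le Cam's method in its mixture form (the Ingster--Suslina $\chi^2$ bound). For any prior $\pi$ supported on $\bbS^{p-1}\cap B_0(k)$, writing $P_\v = N(0,\I_p + \lambda\v\v')^{\otimes n}$, $P_0 = N(0,\I_p)^{\otimes n}$, and $P_\pi=\int P_\v \diff\pi(\v)$, I have
\[
\calE_n(k,p,\lambda) \;\geq\; 1-\TV(P_0,P_\pi) \;\geq\; 1-\tfrac{1}{2}\sqrt{\Ex_{P_0}[L^2]-1},
\qquad L=\frac{\diff P_\pi}{\diff P_0},
\]
so it suffices to bound the second moment of the likelihood ratio by $1+g(\beta_0)$ with $g$ continuous and $g(0+)=0$; the conclusion then holds with $w(\beta_0):=1-\tfrac12\sqrt{g(\beta_0)}$.

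I will take $\pi$ to be the law of $\v=\frac{1}{\sqrt k}\sum_{i\in S}\eps_i e_i$, where $S$ is uniform over the $k$-subsets of $[p]$ and $(\eps_i)_{i\in S}$ are independent Rademacher signs. The standard Gaussian computation (using $\int e^{-x'Mx/2}\diff N(0,\I)(x)=\det(\I+M)^{-1/2}$ iterated $n$ times) gives, for independent draws $\v_1,\v_2\sim\pi$,
\[
\Ex_{P_0}[L^2] \;=\; \Ex_{\pi\otimes\pi}\det\bigl(\I-\lambda^2\v_1\v_1'\v_2\v_2'\bigr)^{-n/2} \;=\; \Ex\bigl[(1-\lambda^2\langle\v_1,\v_2\rangle^2)^{-n/2}\bigr],
\]
which is finite because $\lambda<1$ under the hypothesis (in the trivial regime $\beta_0 k\log(\eexp p/k)/n\geq 1$ the conclusion is vacuous). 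A direct calculation shows that $k\langle\v_1,\v_2\rangle\eqdistr W_H$, where $H=|S_1\cap S_2|\sim\Hyper(p,k,k)$ and, conditionally on $H$, $W_H=\sum_{i=1}^H\eta_i$ for fresh i.i.d.\ Rademacher signs $\eta_i$. Thus the second moment equals $\Ex\bigl[(1-(\lambda^2/k^2)W_H^2)^{-n/2}\bigr]$, reducing the entire lower bound to an estimate on an MGF-type quantity of a squared symmetric random walk stopped at a hypergeometric time.

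The last step is to invoke \prettyref{lmm:rm}, the MGF estimate for $W_H^2$ promised in the introduction. I would split the expectation at the threshold $(\lambda/k)^2 W_H^2\leq 1/2$: on the bulk event the elementary inequality $(1-x)^{-n/2}\leq \exp(nx)$ for $x\leq 1/2$ yields the upper bound $\Ex[\exp(tW_H^2)]$ with $t=n\lambda^2/k^2 \leq \beta_0\log(\eexp p/k)/k$, at which scale \prettyref{lmm:rm} should deliver a uniform $1+g(\beta_0)$ with $g(0+)=0$; on the complementary (extremely rare) tail $|W_H|$ is forced close to $k$, hence $H$ close to $k$, which under $\Hyper(p,k,k)$ has probability of order $(k/p)^{k/2}$, comfortably overcoming the $(\eexp p/k)^{\beta_0 k/2}$ factor since $\beta_0<1/36$ and $\lambda\leq 1$.

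I expect the delicate point to be \prettyref{lmm:rm} itself at the critical scale $t\asymp (1/k)\log(\eexp p/k)$, since the numerical constant $1/36$ is governed entirely by how tightly one can control $\Ex[\exp(tW_H^2)]$ there. The natural route is the Gaussian linearization $\exp(tW^2)=\Ex_{\xi\sim N(0,1)}\exp(\sqrt{2t}\,\xi W)$, which gives $\Ex[\exp(tW_H^2)\mid H]=\Ex_\xi(\cosh(\sqrt{2t}\,\xi))^H$; averaging over $H$ then requires balancing the sub-Gaussian tail of $\xi$ against the hypergeometric upper tail of $H$, with the cutoff $1/36$ emerging precisely at the threshold where these two tails meet. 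Everything else reduces to bookkeeping.
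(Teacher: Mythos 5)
Your proposal follows essentially the same route as the paper's proof: the same Rademacher-signed, uniformly supported $k$-sparse prior, the same Le Cam/$\chi^2$ reduction yielding $\Ex\bigl[(1-\lambda^2\langle\v_1,\v_2\rangle^2)^{-n/2}\bigr]$, the same identification of $k\langle\v_1,\v_2\rangle$ with a symmetric random walk stopped at a $\Hyper(p,k,k)$ time, and the same appeal to \prettyref{lmm:rm}. If anything, you are more careful than the paper at the step passing from $(1-x)^{-n/2}$ to $e^{nx}$ (the paper invokes the "equality" case of \prettyref{lmm:chi2.cov}, which is really an inequality in the unhelpful direction), and your truncation at $\lambda^2\langle\v_1,\v_2\rangle^2\le\tfrac12$ with a separate tail estimate is the right way to make that step rigorous.
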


\prettyref{prop:rank01} shows that testing independence in the rank-one spiked covariance model can be achieved reliably only if the \emph{effective signal-to-noise-ratio} 
\[
\beta_0 = \frac{\lambda^2 }{\frac{k}{n} \log \frac{\eexp p}{k}}\to \infty.
\]
Furthermore, the lower bound in \prettyref{prop:rank01} also captures the following phenomenon:
%s of \emph{strong converse} type, in the sense that 
if $\beta_0$ vanishes, then the optimal probability of error converges to one.
In fact, the lower bound in \prettyref{prop:rank01} is optimal in the sense that the following test succeeds with vanishing probability of error if $\beta_0 \to \infty$:
\[
\text{reject $H_0$ if and only if } \max_{\substack{J \subset [p]\\ |J|=k}} \|\S_{JJ} - \I_{JJ}\| \geq c \sqrt{\frac{k}{n} \log \frac{\eexp p}{k}}
\]
for some $c$ only depends on $\beta_0$. See, \eg, \cite[Section 4]{BR12}. However, the above test has high computational complexity since one needs to enumerate all $k\times k$ submatrices of $\S$. It remains an open problem to construct tests that are both computationally feasible and  minimax rate-optimal.

%	In addition, the proof in \cite[Sec. 4]{BR12} is suboptimal which produces a trivial lower bound when $k\geq \sqrt{p}$.
%	prior constructed  when $k\geq p^{\frac{2}{3}}$, because the test based on $\bar{X}$ is able to separate the two hypothesis. 
 
%The lower bound $k \log \frac{p}{k}$ can be obtained by restricting to the rank-one case and applying $\chi^2$ bound from our old notes. 

\subsection{Testing rank-one spiked model}
\label{sec:rank-one}

As mentioned earlier, a careful study of the rank-one testing problem \prettyref{eq:HT.rank01} provides a major tool for the lower bound arguments.
% in proving Theorems \ref{thm:lb-Sigma-V} and \ref{thm:lb-rank}. 
A key step in this study is the analysis of the moment generating function of a squared symmetric random walk stopped at a hypergeometrically distributed time. We present the main ideas in this section as the techniques can also be useful for other related matrix estimation and testing problems.

It is well-known that the minimax risk is given by the least-favorable Bayesian risk under mild regularity conditions on the model \cite{LeCam86}. 
For the composite testing problem \prettyref{eq:HT.rank01}, it turns out that the rate-optimal least-favorable prior for $\v$ is given by the distribution of the following random vector:
\begin{equation}
	\u = \frac{1}{\sqrt{k}} \J_I \w, % \quad \M = \lambda \u \u',
	\label{eq:prior-wu}
\end{equation}
where 
%$\lambda \geq 0$ is to be chosen later, 
$\w = (\ntok{w_1}{w_p})$ consists of iid Rademacher entries,
%\footnote{By the intuition of CLT, any zero-mean unit-variance bounded distribution should work.}
%$Z \sim \calN(0,I_p)$ 
and $\J_I$ is a diagonal matrix given by $(\J_I)_{ii}=\indc{i \in I}$ with $I$ uniformly chosen from all subsets of $[p]$ of size $k$. In other words, $\u$ is uniformly distributed on the collection of $k$-sparse vectors of unit length with equal-magnitude non-zeros.
%For simplicity, we use $U = \frac{1}{\sqrt{p}} Z$ where $Z \sim \calN(0,I_p)$. 
%Since $U \in \bbS^{p-1}$ a.s., the functional value under the alternative is equal to 
Hence $\u \in \bbS^{p-1} \cap B_0(k)$. We set
\[
\lambda^2 = \frac{\beta_0 k}{n} \log \frac{\eexp p}{k},
\]
where $\beta_0>0$ is a sufficiently small absolute constant.\footnote{Here $\beta_0$ can be chosen to be any constant smaller than $\frac{1}{36}$. See \prettyref{prop:rank01}. The number $\frac{1}{36}$ is certainly not optimized.}
The desired lower bound then follows if we establish that the following (Bayesian) hypotheses 
\begin{equation}
H_0: \bfSigma = \I_p \quad \text{v.s.} \quad H_1: \bfSigma = \I_p + \lambda \u\u'	
	\label{eq:rank01-u}
\end{equation}
cannot be separated with vanishing probability of error. 

\begin{remark}
	The composite testing problem \prettyref{eq:rank01-u} has also been considered in \cite{BR12}. In particular, the following suboptimal lower bound is given in \cite[Theorem 5.1]{BR12}: If
	\begin{equation}
	\lambda \leq 1 \wedge \sqrt{\frac{v k}{n}  \log\pth{1+ \frac{p}{k^2}}}
	\label{eq:RL}
\end{equation}
then the optimal error probability satisfies $\calE_n(k,p,\lambda) \geq C(v)$, where $C(v) \to 1$ as $v \to 0$. This result is established based on the following prior: 
\begin{equation}
\u = \frac{1}{\sqrt{k}} \J_I \ones,	
	\label{eq:prior-BR}
\end{equation}
which is a binary sparse vector with uniformly chosen support.

Compared to the result in \prettyref{prop:rank01}, \prettyref{eq:RL} is rate-optimal  
in the \emph{very sparse} regime of $k = o(\sqrt{p})$. 
However, since $\log(1+x) \asymp x$ when $x \lesssim 1$,  in the \emph{moderately sparse} regime of $k \gtrsim \sqrt{p}$, 
$\log(1+\frac{p}{k^2}) \asymp \frac{p}{k^2}$ and so the lower bound in \eqref{eq:RL} is substantially smaller than the optimal rate in \prettyref{prop:rank01} by a factor of $\sqrt{\frac{k^2}{p}\log\frac{\eexp p}{k}}$, which is a \emph{polynomial} factor in $k$ when $k \gtrsim p^\alpha$ for any $\alpha > 1/2$.
\iffalse
the lower bound \prettyref{eq:RL} is on the order of $\sqrt{\frac{p}{nk}}$, which differs from the optimal separation rate $\sqrt{\frac{k}{n} \log \frac{\eexp p}{k}}$ by a polynomial factor in $k$ when $k = p^\alpha$ for some $\alpha > 1/2$. 
\fi
% This is due to the slack in the proof. 
In fact, by strengthening the proof in \cite{BR12}, one can show that the optimal separation for discriminating \prettyref{eq:rank01-u} using the binary prior \prettyref{eq:prior-BR} is $1 \wedge \sqrt{\frac{k}{n}\log \frac{\eexp p}{k}} \wedge \frac{p}{k \sqrt{n}}$.
% $\lambda \gtrsim 1 \wedge \sqrt{\frac{k}{n}  \log\pth{1+ \frac{p^2}{k^3}}}$. 
 Therefore the prior \prettyref{eq:prior-BR} is rate-optimal \emph{only} in the regime of $k = o(p^{\frac{2}{3}})$, while \prettyref{eq:prior-wu} is rate-optimal for all $k$. Examining the role of the prior \prettyref{eq:prior-wu} in the proof of \prettyref{thm:lb-rank}, we see that it is necessary to randomize the signs of the singular vector in order to take advantage of the central limit theorem and Gaussian approximation. When $k\gtrsim p^{\frac{2}{3}}$, the fact that the singular vector $\u$ is positive componentwise reduces the difficulty of the testing problem.
%	In fact, by the intuition of CLT, any zero-mean unit-variance bounded distribution works. \nb{to be polished}
%\nb{we need to be less critical on BR12. Yihong, can you take care of this point?}	

%First of all, the lower bound becomes $\frac{p}{nk}$ when $k\geq \sqrt{p}$, which is due to the slack in the proof and can be improved to $k\geq p^{\frac{2}{3}}$. Moreover, the prior chosen in \cite[Sec. 4]{BR12} is inherently strictly suboptimal, because the test based on $\bar{X}$ is able to separate the two hypothesis when $k\geq p^{\frac{2}{3}}$. Examining the prior in the proof of \prettyref{thm:lb-rank}, we see that it is necessary to use randomized signs in order to take advance of central limit theorem. 
%	In fact, by the intuition of CLT, any zero-mean unit-variance bounded distribution works. \nb{to be polished}
%	\label{rmk:}
\end{remark}

The main technical tool for establishing the rank-detection lower bound in \prettyref{prop:rank01} is the following lemma, which can be of independent interest. It deals with the behavior of a symmetric random walk stopped after a hypergeometrically distributed number of steps. 
Moreover, note that \prettyref{lmm:rm} also incorporates the non-sparse case ($k=p$ and $H=k$), which proves to be useful in establishing the minimax lower bound for estimating $\bfSigma$ in \prettyref{thm:lb-Sigma-V}.  The proof of
\prettyref{lmm:rm} is deferred to \prettyref{sec:pf-lmmrm}.
\begin{lemma}
	Let $p \in \naturals$ and $k \in [p]$. Let $\ntok{B_1}{B_k}$ be independently Rademacher distributed. Denote the symmetric random walk on $\integers$ stopped at the $m\Th$ step by
	\begin{equation}
	G_m \triangleq \sum_{i=1}^m B_i.
	\label{eq:Gm}
\end{equation}
Let $H \sim \Hyper(p,k,k)$ with $\prob{H=i} = \frac{\binom{k}{i} \binom{p-k}{k-i}}{\binom{p}{k}}, i = \ntok{0}{k}$. 
%Then there exist absolute constants $a,A > 0$, such that
Then there exists a function $g: (0,\frac{1}{36}) \to (1,\infty)$ with $g(0+)=1$,
% $\lim_{a \to 0} g(a)=0$, 
 such that for any $a < \frac{1}{36}$,
\begin{equation}
\expect{\exp\pth{t G_H^2}} \leq g(a)\, ,
	\label{eq:rm}
\end{equation}
where $t = \frac{a}{k} \log \frac{\eexp p}{k}$.
	\label{lmm:rm}
\end{lemma}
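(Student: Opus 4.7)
The plan is to condition on $H$, use the standard Gaussian integration trick to bound the conditional MGF of $G_h^2$, and then sum over $h$ by exploiting the sharp concentration of the hypergeometric distribution around its typically small mean $k^2/p$. The argument naturally splits into two regimes depending on whether the sub-Gaussian bound on $\Ex[\exp(tG_h^2)]$ covers the entire range of $h$ or not.

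For fixed $h$, the identity $\exp(tx^2)=\Ex_{Z\sim N(0,1)}[\exp(\sqrt{2t}\,xZ)]$ combined with $\Ex[\exp(sB)]=\cosh s$ and $\cosh x \leq \exp(x^2/2)$ yields
\[
\Ex[\exp(tG_h^2)] = \Ex_Z\bigl[\cosh(\sqrt{2t}\,Z)^h\bigr] \leq (1-2th)^{-1/2} \qquad \text{whenever } 2th<1.
\]
Writing $L=\log(\eexp p/k)$ so that $t=aL/k$, and setting $h_\star = \lfloor 1/(4t)\rfloor$, this gives $\Ex[\exp(tG_h^2)] \leq \sqrt 2$ for every $h \leq h_\star$. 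If $4aL \leq 1$ then $h_\star \geq k$ and $\Ex[\exp(tG_H^2)] \leq (1-2aL)^{-1/2} \leq \sqrt 2$ directly. In the remaining regime $4aL>1$, I split
\[
\Ex[\exp(tG_H^2)] \leq \sqrt 2 + \sum_{h=h_\star+1}^{k} \Pr(H=h)\,\exp(th^2),
\]
using the pointwise bound $|G_h|\leq h$ in the tail and the hypergeometric Chernoff estimate $\Pr(H \geq h) \leq (\eexp k^2/(hp))^{h}$, which follows from Hoeffding's convex domination of $H$ by $\mathrm{Bin}(k, k/p)$.

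With $\xi = h/k$, the logarithm of the $h$-th tail term equals $k\xi\,(aL\xi + 2 - L - \log\xi)$, and a direct calculus argument shows that for $a < 1/36$ this quantity is strictly negative and bounded above by $-c(a)\cdot k$ uniformly for $\xi\in[1/(4aL),1]$ and $L \geq 1/(4a)$; the threshold $1/36$ emerges from balancing the exponential growth $\exp(th^2)$ against the hypergeometric decay $(\eexp k^2/(hp))^h$ in the worst-case regime near the boundary $L=1/(4a)$. Consequently the tail is dominated by a geometric series with sum $C(a)$ independent of $(k,p)$, and one can take $g(a) := \sqrt 2 + C(a)$. The continuity claim $g(0+)=1$ does not follow from this loose bound and requires an additional refinement of the first step using $(1-2th)^{-1/2}\leq 1+4th$ for $2th\leq 1/2$, together with $\Ex[H]=k^2/p$ and the elementary inequality $t\,\Ex[H]=aLk/p\leq a$, which show that $\Ex[\exp(tG_H^2)]\leq 1+O(a)$ plus a vanishing tail as $a\to 0$. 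The principal technical obstacle is precisely this uniform calculus estimate on the bracketed quantity $aL\xi + 2 - L - \log\xi$ at the boundary of the tail regime, where all slack vanishes simultaneously and the numerical constant $1/36$ becomes critical.
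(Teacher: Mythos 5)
Your proposal is correct, and it reaches the bound by a route that differs from the paper's in its key technical ingredient while sharing the same overall architecture (truncate $H$, Gaussian-type control for small $H$, Chernoff for the tail). Where the paper invokes Tusn\'ady's coupling $|G_m|\stackrel{\rm s.t.}{\leq}|Z_m|+2$ to reduce to a Gaussian, you compute the conditional MGF exactly via the interchange $\Ex[\exp(tG_h^2)]=\Ex_Z[\cosh(\sqrt{2t}Z)^h]\leq(1-2th)^{-1/2}$; this is more elementary, gives cleaner constants, and is what makes the $g(0+)=1$ refinement via $(1-2th)^{-1/2}\leq 1+4th$ and $t\,\Ex H\leq a$ transparent. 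Your dichotomy on $4aL\lessgtr 1$ also replaces the paper's three-way case split on $k$: the paper's Case I ($k\geq p/2$, hence $L\leq\log(2\eexp)$) falls into your first regime, and its Case II ($k\leq L$) is absorbed into your tail estimate; the tail treatment itself (pointwise bound on $G_H$, hypergeometric-by-binomial domination, Chernoff) is the same in spirit, though you keep $\exp(th^2)$ where the paper linearizes to $\exp(tkH)$. I checked the calculus step you flag as the main obstacle: with $f(\xi)=aL\xi+2-L-\log\xi$ convex on $[\xi_0,1]$, $\xi_0=1/(4aL)$, one gets $f(1)=2-(1-a)L\leq 2-\tfrac{35}{4}$ and $f(\xi_0)\leq\tfrac{5}{4}-8(4aL)\leq-\tfrac{27}{4}$ under $a\leq\tfrac{1}{36}$ and $4aL\geq 1$, so each tail term is at most $\exp(-\tfrac{27}{4}h)$ and the geometric sum is a constant that vanishes as $a\to 0$; the claim goes through. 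Two cosmetic points: the logarithm of the $h\Th$ tail term is bounded above by (not equal to) $k\xi(aL\xi+2-L-\log\xi)$, and the uniform bound you actually need is $f(\xi)\leq -c(a)$ per step rather than $\xi f(\xi)\leq -c(a)$, but both hold on your interval.
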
	

\begin{remark}[Tightness of \prettyref{lmm:rm}]
	The purpose of \prettyref{lmm:rm} is to seek the largest $t$, as a function of $(p,k)$, such that $\expect{\exp\pth{t G_H^2}}$ is upper bounded by a constant non-asymptotically. The condition that $t \asymp \frac{1}{k} \log \frac{ep}{k}$ is in fact both necessary and sufficient. To see the necessity, note that $\prob{G_H = H | H=i} = 2^{-i}$. Therefore 
	\[
	\expect{\exp\pth{t G_H^2}} \geq \expect{\exp(t H^2) 2^{-H}} \geq \exp(t k^2) 2^{-k} \, \prob{H=k} \geq  \exp\pth{tk^2 - k\log \frac{2p}{k}},
	\]
which cannot be upper bounded by an absolutely constant unless $t \lesssim \frac{1}{k} \log \frac{\eexp p}{k}$.

%Moreover, the tight bounds on $t$  (resp. $\lambda$) in \prettyref{lmm:rm} (resp. \prettyref{lmm:H}) remains valid if we replace $G_H^2$ (resp. $H^2$) by the product of $G_H$ (resp. $H$) with an indepedent copy of itself.
	\label{rmk:tightlmm}
\end{remark}

%!TEX root = /Users/zongming/Documents/Dropbox/Tony-Zongming-Yihong/SPCA-opnorm/SPCA_opnorm_sub.tex

\section{Estimation of Precision Matrix and Eigenvalues}
\label{sec:precision-eigenvalue}

We have so far focused on the optimal rates for estimating the spiked covariance matrix $\bfSigma$, the rank $r$ and the principal subspace $\sp(\V)$ . 
The technical results and tools developed in the earlier sections turn out to be readily useful for establishing the optimal rates of convergence for estimating the precision matrix $\OO = \bfSigma^{-1}$ as well as the eigenvalues of $\bfSigma$ under the spiked covariance matrix model \eqref{eq:spike-model}. 
%In the following discussion we shall assume  $\lambda \asymp 1$.

Besides the covariance matrix $\bfSigma$, it is often of significant interest to estimate the precision matrix $\OO$, which is closely connected to the Gaussian graphical model as the support of the precision matrix $\OO$ coincides with the edges in the corresponding Gaussian graph. 
Let $\hSS$ be defined in \eqref{eq:hSS} and let $\sigma_i(\hSS)$ denote its $i\Th$ largest eigenvalue value for all $i\in [p]$. 
Define the precision matrix estimator as
\begin{equation}
\wh\OO = \begin{cases}
(\hSS)^{-1}, & \sigma_p(\hSS) \geq \frac{1}{2}, \\
\I_p, & \sigma_p(\hSS) < \frac{1}{2}.
\end{cases}
\label{eq:hOO}
\end{equation}
The following result gives the optimal rates for estimating $\OO$ under the spectral norm.

\begin{prop}[Precision matrix estimation]
\label{prop:rate-Omega}
%Let $\OO=\bfSigma^{-1}$. 
Assume that $\lambda \asymp 1$. If $\frac{k}{n}\log\frac{\eexp p}{k}\leq c_0$ for a sufficiently small constant $c_0 > 0$, then
\begin{equation}
\label{eq:rate-Omega}
\inf_{\hOO} \sup_{\bfSigma \in \Theta_1(k,p,r,\lambda)} \Ex \opnorm{\hOO - \OO}^2 
\asymp  \frac{k}{n}\log\frac{\eexp p}{k},
% + \frac{r}{n}
\end{equation}
where the upper bound is attained by the estimator \prettyref{eq:hOO} with $\gamma_1\geq 10$ in obtaining $\hSS$.
\end{prop}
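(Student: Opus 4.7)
The plan is to reduce the precision matrix problem to covariance estimation by exploiting that under $\lambda\asymp 1$ every $\bfSigma\in\Theta_1(k,p,r,\lambda)$ is uniformly well-conditioned: since $\bfSigma=\V\LL\V'+\I_p$ with $\LL\succeq 0$, its eigenvalues lie in $[1,1+\lambda]$, giving $\|\bfSigma\|\asymp 1$, $\sigma_p(\bfSigma)\geq 1$, and hence $\|\OO\|\leq 1$. The map $\bfSigma\mapsto \OO$ is therefore bi-Lipschitz on $\Theta_1$, so both sides of \eqref{eq:rate-Omega} can be transferred from Theorems~\ref{thm:rate-Sigma} and~\ref{thm:lb-Sigma-V} up to absolute constants, with the $\lambda^2 r/n$ term there reducing to $r/n\leq \frac{k}{n}\log\frac{\eexp p}{k}$ and being absorbed.

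For the upper bound, I would introduce the event $E=\{\|\hSS-\bfSigma\|\leq 1/2\}$. The concentration estimates underlying Theorem~\ref{thm:rate-Sigma}---the deviation function $\eta(\cdot,n,p,\gamma_1)$ from \eqref{eq:diag-dev} together with the probability bound for $\{\bbB_k\neq\emptyset\}$ furnished by Lemma~\ref{lem:inclusion}---in fact yield $\Pr(E^c)\leq p^{-c}$ for some $c>0$, provided $c_0$ in the assumption $\frac{k}{n}\log\frac{\eexp p}{k}\leq c_0$ is taken small enough (depending on $\gamma_1\geq 10$). On $E$ one has $\sigma_p(\hSS)\geq \sigma_p(\bfSigma)-\|\hSS-\bfSigma\|\geq 1/2$, so $\hOO=\hSS^{-1}$ by definition \eqref{eq:hOO}, and the identity $\hSS^{-1}-\bfSigma^{-1}=\hSS^{-1}(\bfSigma-\hSS)\bfSigma^{-1}$ combined with $\|\hSS^{-1}\|\leq 2$ and $\|\bfSigma^{-1}\|\leq 1$ yields $\|\hOO-\OO\|\leq 2\|\hSS-\bfSigma\|$. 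Squaring and taking expectation, Theorem~\ref{thm:rate-Sigma} controls $\Ex[\|\hOO-\OO\|^2\indc{E}]$ at the desired rate $\frac{k}{n}\log\frac{\eexp p}{k}$. Off $E$, the definition \eqref{eq:hOO} forces $\|\hOO\|\leq 2$ and hence $\|\hOO-\OO\|^2\leq 9$, so $\Ex[\|\hOO-\OO\|^2\indc{E^c}]\leq 9\Pr(E^c)$ is polynomially small in $p$ and negligible.

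For the lower bound, the matrix identity $\bfSigma_1-\bfSigma_2=\bfSigma_1(\OO_2-\OO_1)\bfSigma_2$ gives $\|\OO_1-\OO_2\|\geq (\|\bfSigma_1\|\|\bfSigma_2\|)^{-1}\|\bfSigma_1-\bfSigma_2\|\gtrsim\|\bfSigma_1-\bfSigma_2\|$ uniformly on $\Theta_1$. Hence the packing/Fano construction proving \eqref{eq:lb-Sigma} transports verbatim: the KL divergences between the candidate Gaussian laws are unchanged while the pairwise spectral-norm separation is preserved up to a constant factor, delivering a matching lower bound of order $\frac{k}{n}\log\frac{\eexp p}{k}$ for precision matrix estimation. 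The main technical point meriting care is upgrading the in-expectation estimate of Theorem~\ref{thm:rate-Sigma} to the high-probability bound on $\|\hSS-\bfSigma\|$ needed to guarantee invertibility of $\hSS$, since $\|\hOO-\OO\|^2$ is unbounded off that event; this is a matter of distilling a tail statement already implicit in the concentration ingredients of that proof rather than a fundamentally new argument.
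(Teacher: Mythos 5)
Your proposal is correct and follows essentially the same route as the paper: the same event $E=\{\|\hSS-\bfSigma\|\leq 1/2\}$, the same resolvent bound $\|\hSS^{-1}-\bfSigma^{-1}\|\leq\|\hSS^{-1}\|\,\|\bfSigma^{-1}\|\,\|\hSS-\bfSigma\|$ using $\sigma_p(\bfSigma)=1$ and Weyl, and the same crude bound $\|\hOO-\OO\|^2=O(1)$ off $E$. The one place where you make the argument harder than it needs to be is the control of $\Pr(\comp{E})$: you flag the upgrade of \prettyref{thm:rate-Sigma} to a high-probability tail bound as the main technical point, but no such upgrade is needed. Because the off-event loss is bounded by an absolute constant, it suffices that $\prob{\comp{E}}\lesssim \frac{k}{n}\log\frac{\eexp p}{k}$, and this follows immediately from Chebyshev's inequality applied to the in-expectation bound $\Ex\|\hSS-\bfSigma\|^2\lesssim\frac{k}{n}\log\frac{\eexp p}{k}$; this is exactly what the paper does, avoiding any re-examination of the concentration ingredients (which would in any case require tail, not expectation, versions of the Wishart bounds used in \prettyref{thm:rate-Sigma}). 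For the lower bound, your bi-Lipschitz transfer of the $\bfSigma$-lower-bound construction is valid, while the paper argues more concretely via a single two-point comparison from \prettyref{prop:rank01}, noting $\opnorm{\I_p-(\I_p+\lambda\v\v')^{-1}}=\frac{\lambda}{1+\lambda}\asymp\lambda$ by the Woodbury identity; both deliver the rate $\frac{k}{n}\log\frac{\eexp p}{k}$, and your observation that the $\lambda^2 r/n$ term is absorbed under $\lambda\asymp 1$ is correct.
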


The upper bound follows from the lines in the proof of \prettyref{thm:rate-Sigma} after we control the smallest eigenvalue of $\hSS$ as in \eqref{eq:hOO}.
% For the lower bound, we note that as a consequence of the Woodbury matrix identity, the inverse of a spiked covariance matrix with sparse eigenvectors is also of a similar form. So 
\prettyref{prop:rank01} can be readily applied to yield the desired lower bound.
% \prettyref{prop:rate-Omega} then follows from the same arguments as those used in the proofs of Theorems \ref{thm:rate-Sigma} and \ref{thm:lb-Sigma-V}.

Note that the optimal rate in \eqref{eq:rate-Omega} is quite different from the minimax rate of convergence 
$M^2k^2\frac{\log p}{n}$
for estimating $k$-sparse precision matrices where each row/column has at most $k$ nonzero entries. Here $M$ is the $\ell_1$ norm bound for the precision matrices. See \cite{CLZ12}. So the sparsity in the principal eigenvectors and the sparsity in the precision matrix itself have significantly different implications for estimation of $\OO$ under the spectral norm.

%We now turn to estimation of the leading eigenvalues. Since $\sigma$ is assumed to be equal  to $1$, it suffices to estimate the spikes $\lambda_1,\dots,\lambda_r$, or equivalently, $\LL = \diag(\lambda_i)$. For any estimator $\tilde{\LL} = \diag(\tilde\lambda_i)$, we quantify the estimator error by the loss function $\|\tilde\LL -\LL \|_{\ell_\infty}^2 = \max_{i}|\tilde\lambda_i -\lambda_i|^2$.
We now turn to estimation of the eigenvalues. Since $\sigma$ is assumed to be equal to one, it suffices to estimate the eigenvalue matrix
% $\E = \diag(\ntok{\lambda_1}{\lambda_r},\ntok{0}{0})$. 
 $\E = \diag(\lambda_i)$ where $\lambda_i \triangleq 0$ for $i > r$. 
For any estimator $\tE = \diag(\tilde{\lambda}_i)$, we quantify the estimator error by the loss function $\|\tE -\E \|^2 = \max_{i\in[p]} |\tilde{\lambda}_i - \lambda_i|^2$.
The following result gives the optimal rate of convergence for this estimation problem.
\begin{prop}[Uniform estimation of spectra]
	\label{prop:rate-Lambda}
%Let	$\E = \diag(\ntok{\lambda_1}{\lambda_r},\ntok{0}{0})$. 
	Under the same conditions of \prettyref{prop:rate-Omega}, 
		\begin{equation}
\inf_{\hE} \sup_{\bfSigma\in\Theta_1(k,p,r,\lambda)} \Ex \|\hE -\E \|^2
\asymp 
%\lambda^2 \wedge (\lambda+1) 
\frac{k }{n} \log \frac{\eexp p}{k},
	\label{eq:rate-LL}
\end{equation}
where the upper bound is attained by the estimator $\hE = \diag(\sigma_i(\hSS)) - \I_p$ with $\hSS$ defined in \prettyref{eq:hSS} with $\gamma_1 \geq 10$.
\end{prop}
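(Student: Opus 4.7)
The plan has two parts: an upper bound for the specific estimator $\hE = \diag(\sigma_i(\hSS)) - \I_p$, and a matching lower bound obtained by reducing to the rank-one testing problem of \prettyref{prop:rank01}.

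For the upper bound, I would use that $\bfSigma = \V \LL \V' + \I_p$ has eigenvalues $\sigma_i(\bfSigma) = \lambda_i + 1$ (with $\lambda_i = 0$ for $i > r$), so the $i$-th diagonal entry of $\hE - \E$ equals $\sigma_i(\hSS) - \sigma_i(\bfSigma)$. Weyl's inequality then gives $|\sigma_i(\hSS) - \sigma_i(\bfSigma)| \leq \opnorm{\hSS - \bfSigma}$ uniformly in $i$, hence $\opnorm{\hE - \E}^2 \leq \opnorm{\hSS - \bfSigma}^2$. Taking expectation and applying \prettyref{thm:rate-Sigma} yields $\Ex \opnorm{\hE - \E}^2 \lesssim \frac{(\lambda+1)k}{n}\log\frac{\eexp p}{k} + \frac{\lambda^2 r}{n}$; under the assumption $\lambda \asymp 1$ and the standing constraint $r \leq k$, this collapses to $\frac{k}{n}\log\frac{\eexp p}{k}$.

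For the lower bound, I would perform a two-point reduction. Fix $\beta_0 < 1/36$ and set $\lambda_*^2 = \beta_0 \frac{k}{n}\log\frac{\eexp p}{k}$; by the hypothesis $\frac{k}{n}\log\frac{\eexp p}{k} \leq c_0$, we may ensure $\lambda_* \leq 1 \leq \lambda$. Consider $\bfSigma_0 = \I_p$ (with $\LL = 0$, so $\E_0 = 0$) and, for any unit $k$-sparse $\u \in \bbS^{p-1} \cap B_0(k)$, $\bfSigma_1 = \I_p + \lambda_* \u \u'$. Completing $\u$ to a $p \times r$ matrix $\V$ supported on the same $k$ coordinates (possible since $r \leq k$) and setting $\LL = \diag(\lambda_*, 0, \ldots, 0)$ exhibits both covariances in $\Theta_1(k,p,r,\lambda)$, with $\E_1 - \E_0 = \diag(\lambda_*, 0, \ldots, 0)$ so $\opnorm{\E_1 - \E_0} = \lambda_*$. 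Given any eigenvalue estimator $\hE$, define the test $\phi = \indc{\hE_{11} \geq \lambda_*/2}$. Markov's inequality gives both $\Pr_{\bfSigma_0}(\phi = 1) \leq 4 \Ex_{\bfSigma_0}\opnorm{\hE - \E_0}^2/\lambda_*^2$ and $\Pr_{\bfSigma_1}(\phi = 0) \leq 4 \Ex_{\bfSigma_1}\opnorm{\hE - \E_1}^2/\lambda_*^2$. Taking the supremum over $\u$ produces a test for the composite rank-one problem of \prettyref{prop:rank01} whose total error is at most $8 \sup_{\bfSigma \in \Theta_1} \Ex \opnorm{\hE - \E}^2/\lambda_*^2$; since \prettyref{prop:rank01} forces this total error to be at least $w(\beta_0)$, we conclude
\begin{equation*}
\inf_{\hE}\sup_{\bfSigma \in \Theta_1(k,p,r,\lambda)} \Ex \opnorm{\hE - \E}^2 \geq \frac{w(\beta_0)}{8}\lambda_*^2 \asymp \frac{k}{n}\log\frac{\eexp p}{k}.
\end{equation*}

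I do not expect a serious obstacle here: the upper bound is a direct Weyl-plus-\prettyref{thm:rate-Sigma} calculation, and the lower bound piggybacks on the already-established rank-one testing bound. The only minor care is verifying that both points in the reduction live in $\Theta_1(k,p,r,\lambda)$ (which requires $r \leq k$ to construct the orthonormal completion on the common support and $\lambda_* \leq \lambda$ to respect the eigenvalue ceiling), and that the assumption $\lambda \asymp 1$ absorbs the $(\lambda+1)$ and $\lambda^2 r/n$ terms of \prettyref{thm:rate-Sigma} into $\frac{k}{n}\log\frac{\eexp p}{k}$.
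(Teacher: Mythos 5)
Your proposal is correct and follows essentially the same route as the paper: the upper bound is Weyl's inequality applied to $\hE+\I_p$ versus $\E+\I_p$ combined with \prettyref{thm:rate-Sigma} (with $\lambda\asymp 1$ and $r\le k$ absorbing the extra terms), and the lower bound is the same reduction to the rank-one testing problem of \prettyref{prop:rank01} via a two-point argument on the top eigenvalue, which you merely spell out more explicitly with the Markov/Chebyshev step. The only cosmetic slip is the claim ``$1\le\lambda$'': the hypothesis $\lambda\asymp 1$ only gives $\lambda\gtrsim 1$, but all that is needed is $\lambda_*\le\lambda$, which holds since $\lambda_*^2\le\beta_0 c_0$ is small.
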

% \nb{I am confused by the notation here, $\LL$ is a $r\times r$ diagonal matrix of spikes, not the eigenvalues of $\bfSigma$!}
Hence, the spikes and the eigenvalues can be estimated uniformly at the rate of $\frac{k}{n} \log \frac{\eexp p}{k}$ when $k\ll n/\log p$. \prettyref{prop:rate-Lambda} is a direct consequence of \prettyref{thm:rate-Sigma} and \prettyref{prop:rank01}. The proofs of Propositions \ref{prop:rate-Omega} and \ref{prop:rate-Lambda} are deferred to \prettyref{sec:pf-cor}.

\section{Discussions}
\label{sec:discussion}

%\subsection{Other related issues}

%In \prettyref{sec:upper}, we have defined our estimators assuming
We have assumed the knowledge of the noise level $\sigma = 1$ and the support size $k$. 
For a given value of $\sigma^2$, one can always rescale the data and reduce to the case of unit variance. As a consequence of rescaling, the results in this paper remain valid for a general $\sigma^2$ by replacing each $\lambda$ with $\lambda/\sigma^2$ in both the expressions of the rates and the definitions of the parameter spaces. 
When $\sigma^2$ is unknown, it can be easily estimated based on the data. 
Under the sparsity models \eqref{eq:para.space}--\eqref{para.space2}, when $k < p/2$,  $\sigma^2$ can be well estimated by $\wh{\sigma}^2 = \mathrm{median}(s_{jj})$ as suggested in \cite{JohnstoneLu09}, where $s_{jj}$ is $j\Th$ diagonal element of the sample covariance matrix. 

The knowledge of the support size $k$ is much more crucial for our procedure.
An interesting topic for future research is the construction of adaptive estimators which could achieve the minimax rates in Theorems \ref{thm:rate-Sigma}--\ref{thm:rate-V} without knowing $k$.
One possibility is to define $\bbB_k$ in \eqref{eq:supp-set} for all $k\in [p]$, find the smallest $k$ such that $\bbB_k$ is non-empty, and then define estimators for that particular $k$ similar to those in \prettyref{sec:procedure} with necessary adjustments accounting for the extra multiplicity in the support selection procedure.

{\black
For ease of exposition, we have assumed that the data are normally distributed and so various non-asymptotic tail bounds used in the proof follow.
Since these bounds typically only rely on sub-Gaussianity assumptions on the data, we expect the results in the present paper readily extendable to data generated from distributions with appropriate sub-Gaussian tail conditions.
}
The estimation procedure in \prettyref{sec:procedure} is different from that in \cite{Vu12space}. Although both are based on enumerating all possible support sets of size $k$, Vu and Lei \cite{Vu12space} proposed to pick the support set which maximizes a quadratic form, while ours is based on picking the set satisfying certain deviation bounds.

A more important issue is the computational complexity required to obtain the minimax rate optimal estimators. The procedure described in \prettyref{sec:procedure} entails a global search for the support set $A$, which can be computationally intensive. In many cases, {\black this seems unavoidable since} the spectral norm is not separable in terms of the entries/rows/columns. However, in some other cases, there are estimators that are computationally more efficient and can attain the same rates of convergence. 
For instance, in the low rank cases where $r \lesssim \log\frac{\eexp p}{k}$, the minimax rates for estimating $\sp(\V)$ under the spectral norm and under the Frobenius norm coincide with each other. 
See the discussion following \prettyref{thm:rate-V} in \prettyref{sec:upper}.
Therefore the procedure introduced in \cite[Section 3]{CMW12} attains the optimal rates under both norms simultaneously. 
As shown in \cite{CMW12}, this procedure is not only computationally efficient, but also adaptive to the sparsity $k$.
Finding the general {\black complexity-theoretic limits}
%the necessary computational complexities in general 
for attaining the minimax rates under the spectral norm is an interesting and challenging topic for future research.
{\black Following the initial post of the current paper, Berthet and Rigollet \cite{Berthet13} showed in a closely related sparse principal component detection problem that the minimax detection rate cannot be achieved by any computationally efficient algorithm} {\black in the highly sparse regime.}

\section{Proofs}
\label{sec:proof}

We first collect a few useful technical lemmas in Section \ref{sec:pf-lmm} before proving the main theorems in Section \ref{sec:pf-main} in the order of Theorems \ref{thm:rate-Sigma} - \ref{thm:lb-Sigma-V}. We then give the proofs of the propositions in the order of Propositions \ref{prop:rank01}, \ref{prop:rate-Omega}, \ref{prop:rate-Lambda}, and \ref{prop:SS-V}. 
As mentioned in Section \ref{sec:lower}, \prettyref{thm:lb-rank} 
on the rank detection lower bound  
is a direct consequence of  Proposition \ref{prop:rank01}.  We complete this section with the proof of \prettyref{lmm:rm}.

Recall that the row vector of $\X$ are i.i.d.~samples from the $N(\bszero, \bfSigma)$ distribution with $\bfSigma$ specified by \eqref{eq:spike-model}.
Equivalently, one can think of $\X$ as an $n\times p$ data matrix generated as
\begin{equation}
\label{eq:model}
	\X = \U \D \V' + \Z,
\end{equation}
where $\U$ is an $n\times r$ random effects matrix with iid $N(0,1)$ entries, $\D = {\rm diag}(\lambda_1^{1/2},\dots, \lambda_r^{1/2})$,
 % with $\lambda_1\geq \cdots\geq \lambda_r >0$, 
$\V$ is $p\times r$ orthonormal, and $\Z$ has iid $N(0,1)$ entries which are independent of $\U$.

\subsection{Technical Lemmas}
	\label{sec:pf-lmm}

\begin{lemma}
\label{lem:submat-opnorm-bd}
Let $\S = \frac{1}{n}\X'\X$ be the sample covariance matrix, then
\[
\Pr
\bigg( \max_{|B|=k} \norm{\S_{BB}} \leq (\lambda_1+1)
\Big(1 +\sqrt{\frac{k}{n}}+\frac{t}{\sqrt{n}}\Big)^2 \bigg) 
\leq \binom{p}{k} \eexp^{-t^2/2}.
\]
\end{lemma}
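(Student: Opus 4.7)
The plan is to fix a subset $B$ with $|B|=k$, control $\|\S_{BB}\|$ by a standard Gaussian matrix tail bound, and then union bound over all $\binom{p}{k}$ choices of $B$. (The statement as printed evidently refers to the tail event: the probability that $\max_{|B|=k}\|\S_{BB}\|$ \emph{exceeds} the stated bound is at most $\binom{p}{k}e^{-t^2/2}$.)

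First I would observe that the rows of the submatrix $\X_B \in \reals^{n\times k}$ are i.i.d.\ $N(\bszero,\bfSigma_{BB})$, so one can write $\X_B = \Y\,\bfSigma_{BB}^{1/2}$ where $\Y$ has i.i.d.\ $N(0,1)$ entries. Consequently
\[
\|\S_{BB}\| \;=\; \Big\|\tfrac{1}{n}\bfSigma_{BB}^{1/2}\Y'\Y\bfSigma_{BB}^{1/2}\Big\| \;\leq\; \|\bfSigma_{BB}\|\cdot\tfrac{1}{n}\|\Y\|^2 \;\leq\; (\lambda_1+1)\cdot\tfrac{1}{n}\|\Y\|^2,
\]
where I use that $\|\bfSigma_{BB}\| \leq \|\bfSigma\| = \lambda_1+1$ since $\bfSigma_{BB}$ is a principal submatrix of the PSD matrix $\bfSigma$.

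Next, invoke the Davidson--Szarek deviation inequality for the largest singular value of a standard $n\times k$ Gaussian matrix: for every $t\geq 0$,
\[
\Pr\!\left(\|\Y\| \;\geq\; \sqrt{n} + \sqrt{k} + t\right) \;\leq\; e^{-t^2/2}.
\]
On the complement event, $\tfrac{1}{n}\|\Y\|^2 \leq (1+\sqrt{k/n}+t/\sqrt{n})^2$, which combined with the bound above yields
\[
\Pr\!\left(\|\S_{BB}\| \;>\; (\lambda_1+1)\bigl(1+\tfrac{\sqrt{k}}{\sqrt{n}}+\tfrac{t}{\sqrt{n}}\bigr)^2 \right) \;\leq\; e^{-t^2/2}
\]
for each fixed $B$ of size $k$. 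A union bound over all $\binom{p}{k}$ such subsets then gives the claimed inequality.

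There is no real obstacle here; the only points to double-check are (i) the monotonicity $\|\bfSigma_{BB}\|\leq\|\bfSigma\|$ for PSD $\bfSigma$ (immediate from $\bfSigma_{BB} = P_B\bfSigma P_B'$ with $P_B$ a coordinate projector of norm $1$), and (ii) the exact constant in the Davidson--Szarek bound, which is standard.
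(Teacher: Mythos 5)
Your proposal is correct and follows essentially the same route as the paper: whiten the principal submatrix so that $\bfSigma_{BB}^{-1/2}\S_{BB}\bfSigma_{BB}^{-1/2}$ becomes $\tfrac{1}{n}\Y'\Y$ for a standard Gaussian $\Y$, bound $\norm{\bfSigma_{BB}}\leq\lambda_1+1$, apply the Davidson--Szarek singular-value bound, and union bound over the $\binom{p}{k}$ subsets. You also correctly read the displayed inequality as a tail bound on the exceedance event, which is what the paper intends.
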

\begin{proof}
Note that
\begin{align*}
\max_{|B|=k}\norm{\S_{BB}} 
& \leq \max_{|B|=k} \norm{\bfSigma_{BB}}\cdot \norm{\bfSigma_{BB}^{-1/2}\S_{BB}\bfSigma_{BB}^{-1/2}}\\
& \leq (\lambda_1+1)\max_{|B|=k}
\norm{\bfSigma_{BB}^{-1/2}\S_{BB}\bfSigma_{BB}^{-1/2}}
% \\
% & 
\eqdistr (\lambda_1+1)
\max_{|B|=k} \norm{\bfZ_{*B}}^2.
\end{align*}
The result follows from the Davidson--Szarek bound \cite[Theorem II.7]{Davidson01} and the union bound.
 \end{proof}

\begin{lemma}
\label{lem:inclusion}
Let $\bbB_k$ be defined as in \prettyref{eq:supp-set} with $\gamma_1 \geq 3$. Then $\Pr(A\notin \bbB_k) \leq 5(\eexp p)^{1-\gamma_1/2}$.
\end{lemma}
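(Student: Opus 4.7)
The plan is to verify, with high probability, that the true row support $A=\supp(\V)$ (padded by arbitrary extra indices to size exactly $k$ if $|A|<k$) itself lies in $\bbB_k$. The key structural fact is that the rows of $\V$ indexed by $\comp A$ are zero, so $\bfSigma_{DD}=\I_{|D|}$ for every $D\subset\comp A$ and $\bfSigma_{A,\comp A}=\bszero$. Hence $\col{\X}{D}$ consists of i.i.d.\ $N(0,1)$ entries and is independent of $\col{\X}{A}$. This Gaussian-independence setup reduces the problem to two standard Gaussian tail computations combined with a union bound.

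For the first defining condition, I would apply the Davidson--Szarek deviation inequality (as already used in \prettyref{lem:submat-opnorm-bd}) to the $n\times m$ standard Gaussian matrix $\col{\X}{D}$ with deviation level $t=\sqrt{\gamma_1 m\log(\eexp p/m)}$. Two-sided control of its extreme singular values will produce $\|\S_D-\I\|=\|\S_{DD}-\I_m\|\le\eta(m,n,p,\gamma_1)$ off an event of probability at most $2e^{-t^2/2}=2(\eexp p/m)^{-\gamma_1 m/2}$; the precise form of $\eta$ in \eqref{eq:diag-dev} is engineered exactly to match this bound.

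For the second condition, a conditioning step is needed to handle the random factor $\sqrt{\|\S_A\|}$. Conditionally on $\col{\X}{A}$, one has the distributional identity $\S_{DA}\eqdistr\frac{1}{\sqrt n}\Z\sqrt{\S_{AA}}$ with $\Z\in\RR^{m\times k}$ of i.i.d.\ $N(0,1)$ entries independent of $\col{\X}{A}$. A second Davidson--Szarek application to $\Z$ at level $t=\sqrt{\gamma_1 m\log(\eexp p/m)}$, combined with the monotonicity of $x\mapsto x\log(\eexp p/x)$ on $[1,p]$ and $m\le k$, will give $\|\S_{DA}\|\le 2\sqrt{\|\S_A\|}\,\eta(k,n,p,\gamma_1)$ off an extra event of probability $e^{-t^2/2}$.

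The final step is a union bound over all $D\subset\comp A$ with $|D|=m\in[k]$. Using $\binom{p}{m}\le(\eexp p/m)^m$, the total failure probability is bounded by a constant multiple of $\sum_{m=1}^k(\eexp p/m)^{m(1-\gamma_1/2)}$, which may be rewritten as $(\eexp p)^{1-\gamma_1/2}\sum_{m=1}^k[m^m/(\eexp p)^{m-1}]^{\gamma_1/2-1}$. Exploiting $m/(\eexp p)\le 1/e$ for $m\le p$ makes the summand decay super-geometrically in $m$ once $\gamma_1\ge 3$, so the series is bounded by a universal constant. I expect the main obstacle to be the careful arithmetic bookkeeping needed to push the overall constant down to the specific value $5$ claimed in the statement; the probabilistic content is otherwise standard.
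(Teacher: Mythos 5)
Your proposal is correct and follows essentially the same route as the paper's proof: a union bound over subsets $D\subset\comp{A}$, a Wishart deviation bound (the paper cites \cite[Proposition 4]{CMW12}, equivalent to your two-sided Davidson--Szarek application) for the condition $\norm{\S_D-\I}\le\eta(|D|,n,p,\gamma_1)$, and the same conditioning/rotation-invariance reduction $\S_{DA}\eqdistr \frac{1}{\sqrt n}\bfY\sqrt{\S_{AA}}$ for the cross term, followed by summing $(\eexp p/m)^{m(1-\gamma_1/2)}$ over $m\in[k]$. The only cosmetic difference is the deviation level used for the cross term (the paper takes a larger exponent, $2\gamma_1 k\log\frac{\eexp p}{k}$, which makes the final constant $5$ come out directly), but this does not change the substance of the argument.
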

\begin{proof}
Note that by union bound
\begin{align*}
\Pr(A\notin \bbB_k) & \leq
\Pr(\exists D\subset \comp{A}, |D|\leq k, \norm{\S_D-\bfI} > \eta(|D|,n,p,\gamma_1))\\
& ~~~ + \Pr(\exists D\subset \comp{A}, |D|\leq k, \norm{\S_{DA}} > 2\sqrt{\norm{\S_A}}\,\eta(k,n,p,\gamma_1)).
\end{align*}
We now bound the two terms on the right-hand side separately.

For the first term, note that $A=\supp(\V)$. Then for any $D \subset \comp{A}$, $\S_D = \frac{1}{n} \Z_{*D}' \Z_{*D}$. Hence
\begin{align*}
& \hskip -2em \Pr(\exists D\subset \comp{A}, |D|\leq k, \norm{\S_D-\bfI} > \eta(|D|,n,p,\gamma_1)) \\
& \leq \sum_{D\subset \comp{A}, |D|\leq k} \Pr(\norm{\S_D-\bfI} > \eta(|D|,n,p,\gamma_1)) \\
& \leq \sum_{l=1}^{k} \binom{p-k}{l} 2 \exp\left(-\frac{\gamma_1}{2}l \log\frac{\eexp p}{l}\right)\\
& \leq 2 \sum_{l=1}^k \left(\frac{\eexp p}{l}\right)^{l(1-\gamma_1/2)}
\leq 4(\eexp p)^{1-\gamma_1/2},
\end{align*}
% \nb{Double check the last step!}
where the second inequality is \cite[Proposition 4]{CMW12}, and the last inequality holds for all $\gamma_1\geq 3$ and $p\geq 2$.

For the second term, note that for any fixed $D\subset \comp{A}$,
$\S_{DA} = \frac{1}{n}\Z_{*D}'\X_{*A}$, where $\Z_{*D}$ and $\X_{*A}$ are independent. 
Thus, let $\bfW$ be the left singular vector matrix of $\X_{*A}$, we obtain that\footnote{If $\rank(\X_{*A}) < k$, then $\eqdistr$ is changed to $\stackrel{\rm{s.t.}}{\leq}$, and the subsequent arguments continue to hold verbatim.}
\[
\norm{\S_{DA}} \leq \frac{1}{n} \norm{\Z_{*D}'\bfW} \norm{\X_{*A}} 
\eqdistr \frac{1}{\sqrt{n}}\norm{\bfY} \sqrt{\norm{\S_A}},
\]
where $\bfY$ is a $|D|\times k$ matrix with i.i.d.~$N(0,1)$ entries.
Thus, we have
\begin{align*}
& \hskip -2em \Pr(\exists D\subset \comp{A}, |D|\leq k, \norm{\S_{DA}} > 2\sqrt{\norm{\S_A}}\,\eta(k,n,p,\gamma_1))\\
& \leq \sum_{D\subset \comp{A}, |D|\leq k} \Pr(\norm{\S_{DA}} > 2\sqrt{\norm{\S_A}}\,\eta(k,n,p,\gamma_1))\\
& \leq \sum_{D\subset \comp{A}, |D|\leq k} \Pr(\norm{\bfY} > 2\sqrt{n}\,\eta(k,n,p,\gamma_1) ) \\
& \leq \sum_{l=1}^k \binom{p-k}{l} \exp\left(-2\gamma_1 k\log\frac{\eexp p}{k}\right)\\
& \leq k \left(\eexp \frac{p}{k}\right)^{k(1-2\gamma_1)} \leq (\eexp p)^{1-\gamma_1/2},
\end{align*}
where the third inequality is due to the Davidson-Szarek bound. Combining the two bounds completes the proof.
 \end{proof}

\begin{lemma}
\label{lem:S-Asel-dev}
%Let $\gamma_1\ge 3$. There exists a constant $C$ monotone decreasing with respect to $c_0$ in \eqref{eq:assumpt-rate} \nb{The constant $C$ here collides with the set notation below!} and a function $f: [1,\infty) \to \reals_+$ which satisfies $\int_1^{\infty} f(\alpha) \alpha^2 \leq C'$ for some absolute constant $C'$, such that for any $\alpha \geq 1$, $\norm{\S_{\Asel} - \S_A}\leq \alpha \sqrt{\lambda_1+1}\,\eta(k,n,p,\gamma_1)$ holds with probability at least $1-12 f(\alpha)(\eexp p)^{1-\gamma_1/2}$.
Let $\gamma_1\ge 3$. 
% There exists a constant $C$ monotone decreasing with respect to 
% For sufficiently small constant $c_0$ in \eqref{eq:assumpt-rate},
Suppose that $\frac{k}{n}\log\frac{\eexp p}{k}\leq c_0$ for a sufficiently small constant $c_0 > 0$.
 % \nb{The constant $C$ here collides with the set notation below!}, 
Then with probability at least $1-12 (\eexp p)^{1-\gamma_1/2}$, $\bbB_k\neq \emptyset$ and
\begin{equation}
\norm{\S_{\Asel} - \S_A}\leq  C_0(\gamma_1) \sqrt{\lambda_1+1}\,\eta(k,n,p,\gamma_1),
	\label{eq:Ssel}
\end{equation}
where
$C_0(\gamma_1) = 14+4\sqrt{\gamma_1}$.
% \nb{spell out the dependence of $C$ on $c_0$!}
\end{lemma}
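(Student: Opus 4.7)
The plan is to combine Lemma~\ref{lem:inclusion} with a careful block decomposition of $\S_{\Asel}-\S_A$ and then apply the defining inequalities of $\bbB_k$ to both $A$ and $\Asel$ simultaneously. First, by Lemma~\ref{lem:inclusion}, $A\in\bbB_k$ with probability at least $1-5(\eexp p)^{1-\gamma_1/2}$, which in particular forces $\bbB_k\neq\emptyset$ and makes $\Asel$ well defined. All subsequent arguments will be carried out on this event, intersected with a second high-probability event from Lemma~\ref{lem:submat-opnorm-bd} that simultaneously controls $\norm{\S_{BB}}$ for every $k$-subset $B$; a union bound over the two bad events is what produces the $12(\eexp p)^{1-\gamma_1/2}$ failure probability claimed.

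Next, I would partition $[p]$ into the disjoint sets $B_1=\Asel\cap A$, $B_2=\Asel\setminus A$, $B_3=A\setminus\Asel$, $B_4=\comp{\Asel\cup A}$ and write $\S_{\Asel}$ and $\S_A$ in $4\times 4$ block form with this ordering. A short direct computation shows that $\S_{\Asel}-\S_A$ vanishes on $B_1\times B_1$, on $B_2\times B_3$ and $B_3\times B_2$, and on every block touching $B_4$; the surviving part splits naturally as $\Delta_2+\Delta_3$, where $\Delta_2$ is supported on $(B_1\cup B_2)^2$ with blocks $(0,\,\S_{B_1B_2},\,\S_{B_2B_1},\,\S_{B_2B_2}-\I_{B_2B_2})$, and $\Delta_3$ is the analogous object on $(B_1\cup B_3)^2$ with signs reversed. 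Splitting each $\Delta_i$ into its off-diagonal cross-block and its $B_iB_i$ diagonal block and using the triangle inequality yields
\[
\norm{\S_{\Asel}-\S_A}\leq \norm{\S_{B_1B_2}}+\norm{\S_{B_2B_2}-\I_{B_2B_2}}+\norm{\S_{B_1B_3}}+\norm{\S_{B_3B_3}-\I_{B_3B_3}}.
\]

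The crucial observation is that $B_2\subseteq\comp{A}$ with $|B_2|\leq k$, so applying the $A\in\bbB_k$ inequalities with $D=B_2$, together with the containment $B_1\subseteq A$ (so that $\norm{\S_{B_1B_2}}\leq\norm{\S_{B_2 A}}$), bounds the first two summands by $(1+2\sqrt{\norm{\S_A}})\,\eta(k,n,p,\gamma_1)$; symmetrically, $B_3\subseteq\comp{\Asel}$ with $|B_3|\leq k$, so $\Asel\in\bbB_k$ bounds the remaining two by $(1+2\sqrt{\norm{\S_{\Asel}}})\,\eta(k,n,p,\gamma_1)$. To convert $\sqrt{\norm{\S_A}}+\sqrt{\norm{\S_{\Asel}}}$ into a multiple of $\sqrt{\lambda_1+1}$, I would invoke Lemma~\ref{lem:submat-opnorm-bd} with $t=\sqrt{\gamma_1 k\log(\eexp p/k)}$ and union bound over $k$-subsets: under the smallness assumption $\frac{k}{n}\log\frac{\eexp p}{k}\leq c_0$, the factor $1+\sqrt{k/n}+t/\sqrt{n}$ is controlled by a constant multiple of $1+\sqrt{\gamma_1}$, giving $\sqrt{\norm{\S_{BB}}}\lesssim \sqrt{\lambda_1+1}(1+\sqrt{\gamma_1})$ uniformly in $|B|=k$. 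Assembling the pieces and tracking constants then produces a final coefficient of the asserted form $14+4\sqrt{\gamma_1}$.

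The main technical point is executing the block decomposition so that \emph{every} nonzero block of $\S_{\Asel}-\S_A$ can be matched against either the $A$-membership or the $\Asel$-membership inequality of $\bbB_k$; this works precisely because $B_2\subseteq\comp{A}$ and $B_3\subseteq\comp{\Asel}$, which is where the symmetric structure of definition \eqref{eq:supp-set} is essential. A secondary subtlety, already anticipated in the very definition of $\bbB_k$, is that the cross-block bound extracts only $\sqrt{\norm{\S_B}}$ rather than $\norm{\S_B}$; this is what ultimately allows the final bound to scale like $\sqrt{\lambda_1+1}$ instead of $\lambda_1+1$, which in turn feeds into the eigenvalue vs.\ eigenvector error separation required for Theorem~\ref{thm:rate-Sigma}.
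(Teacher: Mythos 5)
Your proposal is correct and follows essentially the same route as the paper's proof: the sets $B_1,B_2,B_3$ are exactly the paper's $G$, $O$, $M$, the surviving blocks are controlled by applying the defining inequalities of $\bbB_k$ to $\Asel$ (for $D=B_3\subset\comp{\Asel}$) and to $A$ (for $D=B_2\subset\comp{A}$), and $\sqrt{\norm{\S_A}},\sqrt{\norm{\S_{\Asel}}}$ are converted to $\sqrt{\lambda_1+1}$ via Lemma~\ref{lem:submat-opnorm-bd} exactly as in the paper. Your two small refinements --- using the symmetric-dilation identity to drop the factor of $2$ on the cross blocks, and bounding $\norm{\S_{B_2B_2}-\I}$ directly from $A\in\bbB_k$ rather than by a fresh tail bound --- only tighten the constant and the failure probability, so the stated bound with $C_0(\gamma_1)=14+4\sqrt{\gamma_1}$ still holds.
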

\begin{proof}
We focus on the event $\bbB_k \neq \emptyset$.
Define the sets
\begin{equation}
\label{eq:Asel-decomp}
G = A\cap \Asel, \quad M = A\cap \comp{\Asel},\quad  O = \comp{A}\cap \Asel \,,
\end{equation}
which correspond to the sets of correctly identified, missing, and overly identified features by the selected support set $\Asel$, respectively.
By the triangle inequality, we have
\begin{align}
	\label{eq:S-Asel-dev-decomp}
\norm{\S_{\Asel} - \S_{A}} \leq \norm{\S_{MM} - \bfI_{MM}} 
+ \norm{\S_{OO} - \bfI_{OO}} 
+ 2\norm{\S_{GM}}
+ 2\norm{\S_{GO}}.
\end{align}
We now provide high probability bounds for each term on the right-hand side.

For the first term, recall that $\Asel \in \bbB_k$ which is defined in \eqref{eq:supp-set}. Since $M\subset \comp{\Asel}$, we have
\begin{equation}
	\label{eq:S-Asel-dev-1}
\norm{\S_{MM} - \bfI_{MM}} \leq \eta(|M|,n,p,\gamma_1) \leq \eta(k,n,p,\gamma_1).
\end{equation}
For the second term, by similar calculation to that in the proof of \prettyref{lem:inclusion}, we have that when $\gamma_1\geq 3$, 
\begin{equation}
	\label{eq:S-Asel-dev-2}
\norm{\S_{OO} - \bfI_{OO}} \leq \eta(k,n,p,\gamma_1)
\end{equation}
with probability at least $1- 4(\eexp p)^{1-\gamma_1/2}$.
For the third term, we turn to the definition of $\bbB_k$ in \prettyref{eq:supp-set} again to obtain
\begin{align*}
\norm{\S_{GM}} \leq \norm{\S_{\Asel M}} 
\leq 2\sqrt{\norm{\S_{\Asel\Asel}}}\,\eta(k,n,p,\gamma_1).
\end{align*}
By \prettyref{lem:submat-opnorm-bd}, 
% under assumption \eqref{eq:assumpt-rate}, 
% there exist a constant $C$ that is monotone decreasing with respect to $c_0$, 
with probability at least $1-(\eexp p)^{1-\gamma_1/2}$,
\begin{align}
\norm{\S_{\Asel\Asel}},\norm{\S_{AA}} & \leq \max_{|B|=k}\norm{S_{BB}}
\leq (\lambda_1 + 1) 
\Big(1 + \sqrt{\frac{k}{n}} + \sqrt{\frac{\gamma_1 k}{n}\log\frac{\eexp p}{k}}\Big)^2 \nonumber \\
& \leq (1+(\sqrt{\gamma_1}+1)\sqrt{c_0})^2 (\lambda_1+1) \nonumber \\
& \leq (\tfrac{1}{2}\sqrt{\gamma_1}+\tfrac{3}{2})^2(\lambda_1+1), \label{eq:SAsel-norm}
\end{align}
where the second inequality holds when $\frac{k}{n}\log\frac{\eexp p}{k}\leq c_0$ and
the last inequality holds for sufficiently small constant $c_0$.
 % $\norm{\S_{\Asel\Asel}}\leq C(\lambda_1+1)$, 
Moreover, the last two displays jointly imply
\begin{equation}
	\label{eq:S-Asel-dev-3}
\norm{\S_{GM}}\leq 
(\sqrt{\gamma_1}+3) %2\sqrt{\gamma_1+2} 
\sqrt{\lambda_1+1}\,\eta(k,n,p,\gamma_1).
\end{equation}
For the fourth term, we obtain by similar arguments that
% by similar arguments to that in the proof of \prettyref{lem:inclusion}, 
with probability at least $1-(ep)^{1-\gamma_1/2}$, 
% $\norm{\S_{A A}}\leq C(\lambda_1+1)$,
\begin{equation}
	\label{eq:S-Asel-dev-4}
\norm{\S_{GO}}\leq \norm{\S_{A O}} \leq 
(\sqrt{\gamma_1}+3)
% 2\sqrt{\gamma_1+2}
\sqrt{\lambda_1+1}\,\eta(k,n,p,\gamma_1).
\end{equation}
% Here, the constant $C$ is monotone decreasing with respect to $c_0$

Note that $\Pr(\bbB_k = \emptyset)\leq \Pr(A\notin \bbB_k)$, and by \prettyref{lem:inclusion}, the latter is bounded above by $5(\eexp p)^{1-\gamma_1/2}$.
So the union bound implies that the intersection of the event $\{\bbB_k\neq \emptyset\}$ and the event that
\prettyref{eq:S-Asel-dev-1}--\prettyref{eq:S-Asel-dev-4} all hold has probability at least $1-12(\eexp p)^{1-\gamma_1/2}$.
On this event, we assemble 
\prettyref{eq:S-Asel-dev-decomp}--\prettyref{eq:S-Asel-dev-4} to obtain
\begin{align*}
\opnorm{\Ssel-\S_{A}} & \leq [2 + 4(\sqrt{\gamma_1}+3)\sqrt{\lambda_1+1}]\,\eta(k,n,p,\gamma_1)\\
& \leq [2 + 4(\sqrt{\gamma_1}+3)]\,\sqrt{\lambda_1+1}\,\eta(k,n,p,\gamma_1).
\end{align*}
This completes the proof.
 \end{proof}

\begin{lemma}
\label{lmm:wishart-bd}
%Let $\Y$ be an $n\times k$ matrix with i.i.d.~$N(0,1)$ entries. Then
Let $\Y\in \RR^{n\times k}$ and $\Z\in \RR^{n\times m}$ be two independent matrices with i.i.d.~$N(0,1)$ entries. Then there exists an absolute constant $C>0$, such that
% For $\delta_k = 2(\sqrt{\frac{k}{n}} + \frac{k}{n} + t + t^2)$ for some 
%For any $t > 0$,	
%\begin{align*}
%	\Pr\left\{ \opnorm{\frac{1}{n}\Y'\Y - \I_k} \leq 2\left(\sqrt{\frac{k}{n}} + t \right) + \left(\sqrt{\frac{k}{n}} + t \right)^2  \right\} \geq  1 - 2e^{-{nt^2}/{2}}.
%\end{align*}
\begin{align}
&	\Ex \Big\|\frac{1}{n}\Y'\Y - \I_k\Big\|^2 \leq  C \pth{\frac{k}{n} + \frac{k^2}{n^2}},\quad \mbox{and}
	\label{eq:YY}\\
&	\Ex \opnorm{\Y'\Z}^2 \leq  C (n(k+m) + km).
%	n^2 \pth{\frac{k}{n} + \frac{k^2}{n^2} + \frac{m}{n} + \frac{m^2}{n^2} }.
	\label{eq:YZ}
\end{align}
% and %\nb{need to double check \prettyref{eq:YZ}!}
% \begin{align}
% 	\Ex \opnorm{\Y'\Z}^2 \leq  C (n(k+m) + km).
% %	n^2 \pth{\frac{k}{n} + \frac{k^2}{n^2} + \frac{m}{n} + \frac{m^2}{n^2} }.
% 	\label{eq:YZ}
% \end{align}
\end{lemma}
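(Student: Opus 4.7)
The plan is to prove both bounds as direct consequences of the Davidson--Szarek concentration inequality~\cite[Theorem II.7]{Davidson01} for Gaussian singular values, which was already invoked in \prettyref{lem:submat-opnorm-bd}.

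For \eqref{eq:YY} I assume $n\geq k$; otherwise $k/n + k^2/n^2 \gtrsim 1$ and the bound is trivial from the crude estimate $\|\frac{1}{n}\Y'\Y - \I_k\|\leq 1 + \sigma_1^2(\Y)/n$ together with $\Ex\sigma_1^4(\Y)\leq C(n+k)^2$. Under $n\geq k$, Davidson--Szarek gives
\[
\sqrt{n}-\sqrt{k}-t \leq \sigma_k(\Y)\leq \sigma_1(\Y)\leq \sqrt{n}+\sqrt{k}+t
\]
with probability at least $1-2e^{-t^2/2}$. Since the eigenvalues of $\frac{1}{n}\Y'\Y-\I_k$ are $\sigma_i^2(\Y)/n-1$, squaring yields
\[
\Big\|\tfrac{1}{n}\Y'\Y - \I_k\Big\| \leq 2\Big(\sqrt{\tfrac{k}{n}}+\tfrac{t}{\sqrt{n}}\Big) + \Big(\sqrt{\tfrac{k}{n}}+\tfrac{t}{\sqrt{n}}\Big)^2
\]
on the same event. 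Dominating $t$ by a random variable $T$ with $\Pr(T>t)\leq 2e^{-t^2/2}$, so that $\Ex T^2,\Ex T^4\leq C$, squaring the displayed bound and taking expectation delivers the claim $C(k/n + k^2/n^2)$.

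For \eqref{eq:YZ} I would exploit rotation invariance of the Gaussian distribution to decouple $\Y$ and $\Z$. Let $\Z = \U_{\Z}\D_{\Z}\V_{\Z}'$ be the thin SVD with $r = \min(n,m)$, so $\U_{\Z}\in\reals^{n\times r}$ has orthonormal columns and $\D_{\Z}\in\reals^{r\times r}$ is diagonal. Conditional on $\Z$, the $k\times r$ matrix $\tilde\Y \triangleq \Y'\U_{\Z}$ has i.i.d.\ $N(0,1)$ entries, because $\U_{\Z}$ has orthonormal columns; since this conditional distribution does not depend on $\Z$, $\tilde\Y$ is independent of $\Z$ unconditionally. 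Hence
\[
\opnorm{\Y'\Z} = \opnorm{\tilde\Y\,\D_{\Z}}\leq \opnorm{\tilde\Y}\cdot\sigma_1(\Z),
\]
and by independence $\Ex\opnorm{\Y'\Z}^2 \leq \Ex\opnorm{\tilde\Y}^2 \cdot \Ex\sigma_1^2(\Z) \leq C(k+r)(n+m)$, applying Davidson--Szarek separately to each factor. Finally, $r=\min(n,m)$ implies $rn+rm\leq 2nm$, so $(k+r)(n+m) = kn+km+rn+rm \leq 2(n(k+m)+km)$, which is exactly the stated bound.

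The only real conceptual step is the rotation-invariance decoupling in (b), which reduces a joint operator-norm bound to a product of two independent ones. The remainder is routine Gaussian concentration; the boundary case $k>n$ in (a) and the short arithmetic verification $(k+r)(n+m)\lesssim n(k+m)+km$ in (b) are minor book-keeping.
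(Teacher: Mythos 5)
Your proof is correct and follows essentially the same route as the paper: for \eqref{eq:YY} the paper simply integrates the high-probability bound of \cite[Proposition 4]{CMW12}, which is exactly the Davidson--Szarek computation you write out, and for \eqref{eq:YZ} the paper uses the identical SVD/rotation-invariance decoupling, only applied to $\Y$ rather than $\Z$ (yielding $\Ex\|\Y'\Z\|^2\lesssim (n+k)(n\wedge k+m)$, then symmetrizing and taking a minimum --- a step your single application to $\Z$ shows is not needed, since $(k+n\wedge m)(n+m)\leq 2(n(k+m)+km)$ already). The only point you gloss over is that when $\sqrt{n}-\sqrt{k}-t<0$ the lower Davidson--Szarek bound is vacuous, but then $\epsilon=\sqrt{k/n}+t/\sqrt{n}>1$ and $|\sigma_k^2(\Y)/n-1|\leq 1\leq 2\epsilon+\epsilon^2$, so the displayed inequality still holds.
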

\begin{proof}
The inequality \prettyref{eq:YY} follows directly from integrating the high-probability upper bound in \cite[Proposition 4]{CMW12}.

	Let the SVD of $\Y$ be $\Y=\A\C\B'$, where $\A,\B$ are $n \times (n\wedge k)$ and $k \times (n\wedge k)$ uniformly Haar distributed, and $\C$ is an $(n\wedge k) \times (n\wedge k)$ diagonal matrix with $\|\C\| = \|\Y\|$. 
	% Moreover, $\{\A,\B,\C,\Z\}$ are mutually independent. Therefore 
	Since $\A$ and $\Z$ are independent, $\A'\Z$ has the same law as a $(n\wedge k) \times m$ iid Gaussian matrix. Therefore
	\[
	\expect{\|\Y'\Z\|^2} \leq \expect{\|\Y\|^2} \expect{\|\A'\Z\|^2} \leq C_0 (n+k) (n\wedge k + m).
	\]
	for some absolute constant $C_0$, where the last inequality follows from the Davidson-Szarek theorem \cite[Theorem II.7]{Davidson01}. 	Exchanging the role of $\Y$ and $\Z$, we have $\expect{\|\Y'\Z\|^2} \leq C_0 (n+m) (n\wedge m + k)$. Consequently, 
	\[
	\expect{\|\Y'\Z\|^2} \leq C_0 ((n+k) (n\wedge k + m) \wedge (n+m) (n\wedge m + k)) \leq 2 C_0 (n(k+m) + km).
	\]
This completes the proof.
 \end{proof}

In the proofs, the following \emph{intermediate matrix}
\begin{equation}
\S_0 = \frac{1}{n} \V\D\U'\U\D\V'+\I_p
\label{eq:S0}
\end{equation}
plays a key role. 
In particular, the following results on $\S_0$ will be used repeatedly.
\begin{lemma}
\label{lem:S0}
Suppose $\lambda_1/\lambda_r\leq \tau$ for some constant $\tau\geq 1$.
If $\opnorm{\frac{1}{n}\U'\U - \I_r} < 1/\tau$, then
\begin{equation}
\label{eq:S0-eval}
\sigma_l(\S_0)>1,\quad \forall l\in [r], \quad \mbox{and}\quad \sigma_l(\S_0)=1, \quad  \forall l>r. 
\end{equation}
Moreover, $\V\V'$ is the projection matrix onto the rank $r$ principal subspace of $\S_0$.
\end{lemma}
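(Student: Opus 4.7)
The plan is to reduce everything to an eigenanalysis of a small $r\times r$ matrix. Write $\S_0 - \I_p = \V\M\V'$ where $\M \triangleq \frac{1}{n}\D\U'\U\D$ is an $r\times r$ symmetric positive semidefinite matrix and $\V \in O(p,r)$ has orthonormal columns. This is a rank-at-most-$r$ perturbation of the identity, so to understand the spectrum of $\S_0$ it suffices to understand the spectrum of $\M$.

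The first step is to show that $\M$ is strictly positive definite. I would bound
\[
\sigma_r(\M) = \sigma_r\bigl(\D\,\tfrac{1}{n}\U'\U\,\D\bigr) \geq \sigma_r(\D)^2 \cdot \sigma_r\bigl(\tfrac{1}{n}\U'\U\bigr) \geq \lambda_r \bigl(1 - \opnorm{\tfrac{1}{n}\U'\U - \I_r}\bigr),
\]
where the first inequality uses $\D = \diag(\sqrt{\lambda_i})$ and Ostrowski-type interlacing for congruence, and the second uses Weyl's inequality. By hypothesis, the right-hand side exceeds $\lambda_r(1-1/\tau) \geq 0$, and since $\lambda_r > 0$ and $\opnorm{\tfrac{1}{n}\U'\U - \I_r} < 1/\tau$, it is strictly positive. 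Similarly $\sigma_1(\M) \leq \lambda_1(1+1/\tau) < \infty$. Hence $\M$ has $r$ strictly positive eigenvalues.

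The second step is to propagate this to $\S_0$. Diagonalize $\M = Q\LL_0 Q'$ with $Q \in O(r,r)$ and $\LL_0 = \diag(\mu_1,\dots,\mu_r)$, $\mu_i > 0$. Then
\[
\S_0 = \V Q\,\LL_0\,(\V Q)' + \I_p,
\]
and the columns of $\V Q$ are orthonormal since $(\V Q)'(\V Q) = Q'\V'\V Q = Q'Q = \I_r$. Complete $\V Q$ to an orthonormal basis of $\reals^p$ by appending columns spanning the orthogonal complement of $\sp(\V)$; in this basis $\S_0$ is diagonal with diagonal entries $(1+\mu_1,\dots,1+\mu_r,1,\dots,1)$. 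This immediately yields $\sigma_l(\S_0) = 1 + \mu_l > 1$ for $l \in [r]$ and $\sigma_l(\S_0) = 1$ for $l > r$, establishing \eqref{eq:S0-eval}.

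Finally, the rank-$r$ principal subspace of $\S_0$ is the span of the first $r$ basis vectors in the above decomposition, i.e.\ the column space of $\V Q$. Its projection matrix is $(\V Q)(\V Q)' = \V Q Q' \V' = \V\V'$, completing the lemma. No step here is genuinely hard; the only place requiring care is invoking the hypothesis $\lambda_1/\lambda_r \leq \tau$ together with $\opnorm{\tfrac{1}{n}\U'\U - \I_r} < 1/\tau$ in the right place to ensure strict positivity of $\sigma_r(\M)$, which guarantees a well-defined eigengap separating the top-$r$ subspace from the trivial eigenvalue $1$.
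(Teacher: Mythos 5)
Your proof is correct, but it follows a genuinely different route from the paper's. The paper argues by perturbation relative to $\bfSigma$: using the identity $\S_0-\bfSigma = \V\D\pth{\frac{1}{n}\U'\U-\I_r}\D\V'$, it bounds $\opnorm{\S_0-\bfSigma} \leq \lambda_1\opnorm{\frac{1}{n}\U'\U-\I_r} < \lambda_1/\tau \leq \lambda_r$ and then applies Weyl's inequality to get $\sigma_r(\S_0) > \sigma_r(\bfSigma)-\lambda_r = 1$, finishing with the observation that $\S_0-\I_p$ has rank at most $r$. You instead work directly with the $r\times r$ core $\M = \frac{1}{n}\D\U'\U\D$, show it is strictly positive definite, and read off the entire spectrum and the principal subspace from the explicit diagonalization $\S_0 = \V Q\,\LL_0\,(\V Q)' + \I_p$. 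Your route buys two things: it only uses $\opnorm{\frac{1}{n}\U'\U-\I_r} < 1$ (together with $\lambda_r>0$), so the condition-number hypothesis $\lambda_1/\lambda_r\leq\tau$ is not actually needed for this lemma, and it identifies the eigenvalues of $\S_0$ exactly rather than merely bounding $\sigma_r(\S_0)$ from below. The paper's route is shorter and reuses the identity for $\S_0-\bfSigma$, which is needed elsewhere (\eg, in the proofs of Theorems \ref{thm:rate-Sigma} and \ref{thm:rate-rank}) anyway. One small presentational point: your appeal to Ostrowski's theorem is heavier than necessary --- the bound $\sigma_r(\D N\D)\geq \sigma_r(\D)^2\sigma_r(N)$ for positive semidefinite $N$ follows in one line from $x'\D N\D x \geq \sigma_r(N)\|\D x\|^2 \geq \sigma_r(N)\sigma_r(\D)^2\|x\|^2$.
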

\begin{proof}
It is straightforward to verify that
\begin{equation}
\S_0-\bfSigma = \V\D\pth{\frac{1}{n}\U'\U-\I_r}\D\V'. 	
\label{eq:S0SS}
\end{equation}
When $\opnorm{\frac{1}{n}\U'\U - \I_r} < 1$, 
$\opnorm{\S_0-\bfSigma} \leq \opnorm{\D}^2 \opnorm{\frac{1}{n}\U'\U - \I_r} < \lambda_1 / \tau \leq \lambda_r$.
So Weyl's inequality {\cite[Theorem 4.3.1]{HornJohnson}} leads to
\[
\sigma_r(\S_0) \geq \sigma_r(\bfSigma) - \opnorm{\S_0-\bfSigma} > \lambda_r+1-\lambda_r =1.
\]
Note that $\S_0$ always has $1$ as its eigenvalue with multiplicity at least $p-r$. We thus obtain \eqref{eq:S0-eval}.

When \eqref{eq:S0-eval} holds, \eqref{eq:S0} shows that the rank $r$ principal subspace of $\S_0$ is equal to that of $\frac{1}{n}\V\D\U'\U\D\V'$.
Therefore, the subspace is spanned by the column vectors of $\V$, and $\V\V'$ is the projection matrix onto it since $\V\in O(p,r)$. 
\end{proof}

%
%\begin{lemma}
%	\label{lmm:normal-prod-bd}
%Let $\Y\in \RR^{n\times l}$ and $\Z\in \RR^{n\times m}$ be two independent matrices with i.i.d.~$N(0,1)$ entries. Then for any $0<a<\frac{1}{2}\sqrt{n}$ and $b>0$,
%\begin{align*}
%& \Pr\left\{ \opnorm{\Y'\Z}\geq n\sqrt{1 + \frac{a}{n}}
%\left(\sqrt{\frac{l}{n}}+\sqrt{\frac{m}{n}}+\frac{b}{\sqrt{n}}\right) \right\} \leq (l\wedge m)e^{-3a^2/16}+e^{-b^2/2}.
%\end{align*}
%\end{lemma}

To prove the lower bound for rank detection, we need the following lemma concerning the 
%The counterpart of \prettyref{lmm:chi2} for covariance model is given by the following lemma, which also gives an inequality relating 
the $\chi^2$-divergences in covariance models. Recall that the $\chi^2$-divergence between two probability measures is defined as
\[
\chi^2(P \, || \, Q) \triangleq \int \pth{\fracd{P}{Q} - 1}^2 \diff Q.
\]
For a distribution $F$, we use $F^{\otimes n}$ to denote the product distribution of $n$ copies of $F$.
\begin{lemma}
Let $\nu$ be a probability distribution on the space of $p \times p$ symmetric 
%positive semi-definite 
random matrix $\M$ such that $\opnorm{\M} \leq 1$ almost surely. 
%Let $\overline{P}_\nu$ denote the scale mixture $\int \calN(0, \Sigma) \nu(\diff \Sigma)$. Then
%Let $\overline{P}_\nu$ denote the scale mixture 
Consider the scale mixture distribution $\Expect[N(0,  \I_p + \M)^{\otimes n}] = \int N(0,  \I_p + \M)^{\otimes n} \nu(\diff \M)$. Then
\begin{align}
%\chi^2( \overline{P}_\nu^n \, || N(0,  I_p)^n)
%= & ~ \expect{\det(I_p - ^{-2} (\Sigma_1- I_p)(\Sigma_2- I_p))^{-\frac{n}{2}}}-1 \\
%\geq & ~ \expect{\exp(I_p - ^{-2} (\Sigma_1- I_p)(\Sigma_2- I_p))^{-\frac{n}{2}}}-1	\\
%\chi^2( \overline{P}_\nu \, || N(0,  \I_p)^{\otimes n})+1
\chi^2(\Expect[N(0,  \I_p + \M)^{\otimes n}] \, || N(0,  \I_p)^{\otimes n})
= & ~ \expect{\det(\I_p - \M_1 \M_2)^{-\frac{n}{2}}}  -1 
\label{eq:chi2-cov}
%\geq & ~ \expect{\exp\pth{\frac{n}{2}\iprod{\M_1}{\M_2} }}, 
%\label{eq:chi2-mean}
\end{align}
where $\M_1$ and $\M_2$ are independently drawn from $\nu$. 
%Moreover, if $\|\M\|\leq \rho < 1$ a.s., then
%\begin{align}
%\chi^2(\Expect[N(0,  \I_p + \M)^{\otimes n}] \, || N(0,  \I_p)^{\otimes n})+1
%\leq & ~ \expect{\exp\pth{\frac{n}{2}\iprod{\M_1}{\M_2} }}, \label{eq:chi2-mean-ub}
%\end{align}

%If, in addition, that $\opnorm{M} \leq \delta $ almost surely for some $\delta \in (0,1)$, then
%\begin{align}
%%\chi^2( \overline{P}_\nu^n \, || \calN(0,  I_p)^n)
%%= & ~ \expect{\det(I_p - ^{-2} (\Sigma_1- I_p)(\Sigma_2- I_p))^{-\frac{n}{2}}}-1 \\
%%\geq & ~ \expect{\exp(I_p - ^{-2} (\Sigma_1- I_p)(\Sigma_2- I_p))^{-\frac{n}{2}}}-1	\\
%\chi^2( \overline{P}_\nu^n \, || \calN(0,  I_p)^n) 
%%\leq & ~ \expect{\exp\pth{-\log(1-\delta^2)\frac{n}{2^{2}}\iprod{M_1}{M_2} }}-1	\\
%\leq & ~ \expect{\exp\pth{\frac{n}{2^{2}}\iprod{M_1}{M_2} +  \frac{n}{2 (1-\delta^2) ^{4}} \Fnorm{M_1 M_2}^2}}-1 \\
%\leq & ~ \expect{\exp\pth{\frac{n}{^{2}}\iprod{M_1}{M_2}}}^{\frac{1}{2}} \expect{\exp\pth{\frac{n}{2 (1-\delta^2) ^{4}} \Fnorm{M_1 M_2}^2}}^{\frac{1}{2}}-1.
%%= & ~ \chi^2( \nu * \calN(0, 2^{2}n^{-1} I_p) \, || \calN(0, 2^{2}n^{-1}  I_p)) - 1. \label{eq:chi2.ub}
%\end{align}
	\label{lmm:chi2.cov}
\end{lemma}
\begin{proof}
	Denote by $g_{i}$ the probability density function of $N\left(0,\bfSigma _{i}\right) $ for $i=0,1$ and $2$, respectively. Then it is straightforward to verify that
\begin{align}
\int \frac{g_{1}g_{2}}{g_{0}}
= & ~ \det(\bfSigma_0)^{1/2} \left[ \det \left( \bfSigma _{1}+\bfSigma _{2} - \bfSigma_1\bfSigma_{0}^{-1} \bfSigma_2\right)\right] ^{-\frac{1}{2}} \label{eq:g012}
%= & ~ \left[ \det \left( I-\bfSigma_{0}^{-2}(\bfSigma _{1}-\bfSigma _{0}) ( \bfSigma _{2}-\bfSigma_{0}) \right) \right] ^{-\frac{1}{2}},
\end{align}
if $ \bfSigma _{1}+\bfSigma _{2} \geq \bfSigma_1\bfSigma_{0}^{-1} \bfSigma_2$; otherwise, the integral on the left-hand side of \prettyref{eq:g012} is infinite.
%Put $M_i = \bfSigma_i - \rho I_p$. 
Applying \prettyref{eq:g012} to $\bfSigma_0=\I_p$ and $\bfSigma_i=\I_p+\M_i$ and using Fubini's theorem yield \prettyref{eq:chi2-cov}.
%, we have
%\begin{align}
%\chi^2( 	\Expect[N(0,  \I_p + \M)^{\otimes n}] \, || N(0, \I_p)^{\otimes n}) + 1 
%% \chi^2( \overline{P}_\nu \, || N(0, \I_p)^n) + 1
%= & ~ \expect{\det(\I_p - \M_1 \M_2)^{-\frac{n}{2}}} \label{eq:MM1} \\
%= & ~ \expect{\exp\pth{-\frac{n}{2} \log \det(\I_p - \M_1 \M_2)}} \\
%\geq & ~ \expect{\exp\pth{\frac{n}{2} \Tr(\M_1 \M_2)}}, \label{eq:MM2}
%\end{align}
%\begin{align}
%& \hskip -6em	\chi^2( 	\Expect[N(0,  \I_p + \M)^{\otimes n}] \, || N(0, \I_p)^{\otimes n}) + 1 \nonumber \\
%% \chi^2( \overline{P}_\nu \, || N(0, \I_p)^n) + 1
%= & ~ \expect{\det(\I_p - \M_1 \M_2)^{-\frac{n}{2}}} \label{eq:MM1} \\
%= & ~ \expect{\exp\pth{-\frac{n}{2} \log \det(\I_p - \M_1 \M_2)}} \\
%\geq & ~ \expect{\exp\pth{\frac{n}{2} \Tr(\M_1 \M_2)}}, \label{eq:MM2}
%\end{align}
%where \prettyref{eq:MM1} is due to $\opnorm{\M_1 \M_2} \leq \opnorm{\M_1} \opnorm{\M_2} \leq 1$ a.s., and \prettyref{eq:MM2} is due to $\log \det(\I+\A) \leq \Tr(\A)$, with equality if and only if $\rank(\A)=1$.
% which follows from $\log(1-\lambda) \leq -\lambda$ for $\lambda < 1$.
%If $\lambda_{\max}(M) \leq \delta \rho < \rho$, then $\lambda_{\max}(\rho^{-2} M_1 M_2) < \delta^2$. Using 
%\[
%\log(1-\lambda) \geq -\frac{\lambda}{1-\lambda} = -\lambda - \frac{\lambda^2}{1-\lambda} \geq -\lambda - \frac{\lambda^2}{1-\delta^2},
%\]
%% we have $\frac{\log(1-\delta^2)}{\delta^2} \lambda \geq \log(1-\delta^2) \lambda$ 
%for all $\lambda \leq \delta^2 < 1$, we have
%\[
%\log \det(I - \rho^{-2} M_1 M_2) \geq -\rho^{-2} \Tr(M_1 M_2) - \frac{1}{(1-\delta^2) \rho^{4}} \Fnorm{M_1 M_2}^2,
%\]
%which gives \prettyref{eq:chi2.ub}.
\end{proof}
%\nb{
%fix: replace \eqref{eq:MM2} by 
%$\expect{(1-\Tr(\M_1 \M_2))^{-\frac{n}{2}}}$. 
%Set $\|\M \| \leq 1 - \delta < 1$, then we get 
%$\expect{\exp\pth{\frac{n \delta'}{2} \Tr(\M_1 \M_2)}}$
%}

\subsection{Proofs of Main Results}
\label{sec:pf-main}

%!TEX root = /Users/zongming/Documents/Dropbox/Tony-Zongming-Yihong/SPCA-opnorm/SPCA_opnorm_sub.tex

%\subsubsection{Proof of Theorems \ref{thm:rate-Sigma} and \ref{thm:lb-Sigma}}
\subsubsection{Proofs of the Upper Bounds}
% \begin{proof}[Proof of \prettyref{thm:rate-Sigma}]
{\bf Proof of \prettyref{thm:rate-Sigma}}
%	First we prove the upper bound. 
% \emph{(Upper bound)} 	
% \nb{The upper bound seems to only need $\frac{k}{n}\log\frac{\eexp p}{k}\leq c_0$ where $c_0$ is not entangled with $\gamma_1$.}
Let $\gamma_1 \geq 10$ be a constant.
Denote by $E$ the event that \prettyref{eq:Ssel} holds, which, in particular, contains the event $\{\bbB_k \neq \emptyset\}$. By triangle inequality,
	\begin{align}
\expect{\norm{\S_{\Asel} - \bfSigma}^2 \ones_E}
\leq & ~ 2 \, \expect{\norm{\S_{\Asel} - \S_A}^2 \ones_E} + 2 \, \expect{\norm{\S_A - \bfSigma}^2 \ones_E} 	\nonumber \\
\lesssim & ~ 2 (\lambda_1 +1) \eta^2(k,n,p,\gamma_1)	 + 2 \, \expect{\norm{\S_A - \bfSigma}^2} \nonumber \\
\asymp & ~ (\lambda_1 + 1) \frac{k}{n} \log\frac{\eexp p}{k}	 + \expect{\norm{\S_A - \bfSigma}^2} , \label{eq:SAhatSS}
\end{align}	
where the last step holds because $\frac{k}{n}\log\frac{\eexp p}{k}\leq c_0$.
% is due to the following: The assumption \prettyref{eq:assumpt-rate} implies that $n \geq c_0 k \log \frac{\eexp p}{k}$. 
In view of the definition of $\eta$ in \prettyref{eq:diag-dev}, we have $\eta^2(k,n,p,\gamma_1) \asymp \frac{k}{n} \log\frac{\eexp p}{k}$.

To bound the second term in \prettyref{eq:SAhatSS}, define $\J_B$ as the diagonal matrix given by
\begin{equation}
(\J_B)_{ii} = \ind{i\in B}.	
	\label{eq:JB}
\end{equation}
Then, for $\S_0$ in \eqref{eq:S0},
	\begin{align}
	\S_A 
= & ~ \J_A (\S - \I) \J_A + \I	
% \nonumber \\
% = & ~ 
= \J_A \S  \J_A - \I_{AA} + \I	\nonumber \\
= & ~ \S_0 + \frac{1}{n} \V \D \U' \Z \J_A  + \frac{1}{n} \J_A \Z' \U \D\V'  +  \J_A \pth{\frac{1}{n} \Z'\Z-\I}\J_A.
\label{eq:SA}
\end{align}
	Therefore
	\begin{align}
	\|\S_A - \bfSigma\|
\leq & ~ \|\S_0 - \bfSigma\| + \frac{2}{n} \norm{\V \D \U' \Z \J_A}  +  \frac{1}{n} \Opnorm{\J_A \pth{\frac{1}{n} \Z'\Z-\I}\J_A}
\label{eq:SASS}
\end{align}
In view of \prettyref{eq:S0SS} and \prettyref{lmm:wishart-bd}, we have
\begin{equation}
	\Ex \|\S_0 - \bfSigma\|^2 \leq \norm{\D}^4 \, \Ex \opnorm{\frac{1}{n}\U'\U-\I_r}^2 \lesssim \lambda_1^2 \pth{\frac{r}{n} + \frac{r^2}{n^2}} \asymp \frac{\lambda_1^2 r}{n},
	\label{eq:SASS-1}
\end{equation}
where the last step is due to $n \geq c_0 k \log \frac{\eexp p}{k} \geq c_0 k$ by assumption.
%, in view of the assumption \prettyref{eq:assumpt-rate}. 
Similarly, 
\begin{equation}
	\Ex \Opnorm{ \J_A \pth{\frac{1}{n} \Z'\Z-\I}\J_A}^2 \lesssim \frac{k}{n},
	\label{eq:SASS-2}
\end{equation}
Again by \prettyref{eq:YZ} in \prettyref{lmm:wishart-bd},
\begin{equation}
	\frac{1}{n^2} \Ex \norm{\V \D \U' \Z \J_A}^2 \leq \frac{\lambda_1}{n^2} \Ex \norm{\U' \Z \J_A}^2 \lesssim \frac{\lambda_1 (n(k+r)+kr)}{n^2} \asymp \frac{\lambda_1 k}{n} .
	\label{eq:SASS-3}
\end{equation}
Assembling \prettyref{eq:SASS} -- \prettyref{eq:SASS-3}, we have
\begin{equation}
	\Ex \|\S_A - \bfSigma\|^2 \lesssim \frac{\lambda_1^2 r}{n} + \frac{(\lambda_1+1) k}{n} .
	\label{eq:SASS-4}
\end{equation}
%	$\|\S_A - \bfSigma\| = \|\S_{AA} - \bfSigma_{AA}\|$, where $\bfSigma_{AA} = \V_{A*} \LL \V'_{A*}+\I_{AA}$. Then $\bfSigma_{AA}^{-1/2} \S_{AA} \bfSigma_{AA}^{-1/2} \, \eqdistr \W_k$, where $\W_k$ is equal to $\frac{1}{n}$ times a $k\times k$ standard Wishart matrix with $n$ degrees of freedom. Using that $\|\S_A - \bfSigma\| \leq (1+\lambda_1) \big\|\W_k - \I_{AA}\big\|$ and applying the deviation inequality in \cite[Proposition 1]{CMW12}, we obtain $\expect{\norm{\S_A - \bfSigma}^2} \lesssim (1+\lambda_1)^2 (\frac{k}{n}+\frac{k^2}{n^2})$. 
	Combining \prettyref{eq:SAhatSS} and \prettyref{eq:SASS-4} yields
	\begin{equation}
	\expect{\norm{\S_{\Asel} - \bfSigma}^2 \ones_E} \lesssim (\lambda_1+1)  \frac{k}{n} \log\frac{\eexp p}{k} + \frac{\lambda_1^2 r}{n}.
	\label{eq:error-E}
\end{equation}
%	\[
%\prob{\|\S_A - \bfSigma\|	\geq 4 \sqrt{1+\lambda_1} \pth{\sqrt{\frac{k}{n}} + \frac{k}{n}}  } \leq   2 \eexp^{-k}. %\eexp^{-{nt^2}/{2}}.
%	\]

Next we control the estimation error conditioned on the complement of the event $E$. First note that $\S_{\Asel}-\bfSigma = \begin{bmatrix}
\S_{\Asel \Asel} & \bszero \\ \bszero & \bfI_{\comp{\Asel}\comp{\Asel}}
\end{bmatrix} - \bfSigma$. Then $\|\Ssel - \bfSigma\| \leq \|\S\| + \|\bfSigma\| + 1 \leq \norm{\bfSigma}(\norm{\W_p} + 1) + 1$, where $\W_p$ is equal to $\frac{1}{n}$ times a $p \times p$ Wishart matrix with $n$ degrees of freedom. Also, $\norm{\bfSigma-\I} = \lambda_1$. 
In view of \prettyref{eq:hSS}, we have $\norm{\hSS-\bfSigma} \leq (1+\lambda_1)(\norm{\W_p} + 2)$. Using Cauchy-Schwartz inequality, we have
	\begin{align*}
\expect{\norm{\hSS - \bfSigma}^2 \ones_{\comp{E}}}
\leq & ~ (1+\lambda_1)^2 \expect{(\norm{\W_p} + 2)^2 \ones_{\comp{E}}	}\\
\leq & ~ (1+\lambda_1)^2 \sqrt{\expect{(\norm{\W_p} + 2)^4}}  \sqrt{\prob{\ones_{\comp{E}}	}}.
\end{align*}
Note that $\|\W_p\| \eqdistr \frac{1}{n} \sigma_1^2(\Z)$. By  \cite[Theorem II.7]{Davidson01}, $\prob{\sigma_1(\Z) \geq 1 + \sqrt{\frac{p}{n}} + t} \leq \exp(-nt^2/2)$. Then $\expect{\norm{\W_p}^4} \lesssim \frac{1}{n^4} (1 + \frac{p^4}{n^4})$. By \prettyref{lem:S-Asel-dev}, we have $\prob{\comp{E}} \leq 12(\eexp p)^{1-\gamma_1/2}$. Therefore
\begin{equation}
\expect{\norm{\hSS - \bfSigma}^2 \ones_{\comp{E}}} \lesssim (1+\lambda_1)^2 \frac{1}{n^2} \pth{1 + \frac{p^2}{n^2} } p^{\frac{1}{2}-\frac{\gamma_1}{4}}
	\label{eq:error-Ec}
\end{equation}
%	\nb{for which we need to assume \prettyref{eq:assumpt-rate} because we need $c_0$. Pick $\gamma_1 = 3$?}
Choose a fixed $\gamma_1 \geq 10$. Assembling \prettyref{eq:error-E} and \prettyref{eq:error-Ec}, we have
\begin{align*}
\expect{\norm{\hSS - \bfSigma}^2} 
= & ~ \expect{\norm{\S_{\Asel} - \bfSigma}^2 \ones_E} + \expect{\norm{\S_{\Asel} - \bfSigma}^2 \ones_{\comp{E}}} 	\\
\lesssim & ~ (1+\lambda_1)  \frac{k}{n} \log\frac{\eexp p}{k} + \frac{\lambda_1^2 r}{n} + (1+\lambda_1)^2 \frac{1}{n^2} 
% \\
\asymp 
% & ~ 
(1+\lambda_1)  \frac{k}{n} \log\frac{\eexp p}{k} + \frac{\lambda_1^2 r}{n}.
\end{align*}	
% \nr{Note that here the loss is NOT BOUNDED!!}
%\end{proof}

\bigskip

\noindent{\bf Proof of \prettyref{thm:rate-rank}}
% \begin{proof}[Proof of \prettyref{thm:rate-rank}]
% \nb{Seems to need $\eta(k,n,p,\gamma_1) \leq 1/4\wedge 1/(2\tau)$! So the $c_0$ in \eqref{eq:thm-rank-assumpt} is entangled with $\gamma_1$. In particular, $\gamma_1$ cannot be too large.}
% UB: \nb{ZM needs to carefully carry out explicit arguments, 2012/12/23}
To prove the theorem,
it suffices to show that with the desired probability, 
\begin{equation}
\label{eq:rank-uppbd-equiv}
\sigma_r(\Ssel) > 1 + \gamma_2\,\sqrt{\opnorm{\Ssel}}\,\eta(k,n,p,\gamma_1) > \sigma_{r+1}(\Ssel).
\end{equation}

Recall \eqref{eq:S0} and \eqref{eq:S0SS}.
% First, define $\S_0 = \frac{1}{n} \V\D\U'\U\D\V'+\I_p$. Then
% \[
% \S_0-\bfSigma = \V\D\pth{\frac{1}{n}\U'\U-\I_r}\D\V'. 
% \]
By \cite[Proposition 4]{CMW12}, with probability at least $1-2(\eexp p)^{-\gamma_1/2}$, $\opnorm{\frac{1}{n}\U'\U -\I_r} \leq \eta(k,n,p,\gamma_1)$.
Under \eqref{eq:thm-rank-assumpt}, $\sqrt{\frac{k}{n}\log\frac{\eexp p}{k}}\asymp \eta(k,n,p,\gamma_1)\leq 1/(2\tau)$ when $c_0$ is sufficiently small, % \nb{note that here $c_0$ and $\gamma_1$ are entangled!}, 
and so
%\prettyref{eq:S0-dev} implies that 
$\eta(k,n,p,\gamma_1)\leq 1/(2\tau)$. 
Thus,
\begin{equation}
	\label{eq:S0-dev}
\opnorm{\S_0 - \bfSigma} 
\leq \opnorm{\D}^2 \opnorm{\frac{1}{n}\U'\U -\I_r}
\leq \lambda_r/2.
% \leq \lambda_1 \, \eta(k,n,p,\gamma_1).
\end{equation}
Therefore, \prettyref{lem:S0} leads to \eqref{eq:S0-eval}.
Moreover, Weyl's inequality leads to
% \[
% |\sigma_l(\S_0) - \sigma_l(\bfSigma)| \leq \opnorm{\S_0-\bfSigma}\leq \lambda_1\, \eta(k,n,p,\gamma_1),\qquad \forall l\in [p].
% \]
% Therefore, 
\begin{align}
	\label{eq:sigma-r-S0}
\sigma_r(\S_0) \geq \sigma_r(\bfSigma) - |\sigma_r(\S_0) - \sigma_r(\bfSigma)| 
\geq \lambda_r + 1- \lambda_1\eta(k,n,p,\gamma_1)\geq \hf\lambda_r + 1 > 1.
\end{align}
% By its definition in \eqref{eq:S0}, $\S_0$ has $p-r$ eigenvalues exactly equal to $1$. Hence, the last display further implies that
% \nb{insert $\S_0$ fact!}
Next, we consider $\S_A - \S_0$. By \prettyref{eq:SA}, we have
\[
	\|\S_A - \S_0\|
\leq \frac{2}{n} \norm{\V \D \U' \Z \J_A}  +  \frac{1}{n} \norm{\J_A (\Z'\Z-\I)\J_A}.
\]
By \cite[Proposition 4]{CMW12} and \eqref{eq:thm-rank-assumpt}, when $c_0$ is sufficiently small, with probability at least $1-(\eexp p + 1)(\eexp p)^{-\gamma_1/2}$,
\begin{align*}
\norm{\V \D \U' \Z \J_A} & \leq \opnorm{\D}\opnorm{\U'\Z\J_A} \\
& \leq \sqrt{\lambda_1}\, n\sqrt{1+\frac{8\gamma_1\log p}{3n}}
\Big(\sqrt{\frac{r}{n}}+\sqrt{\frac{k}{n}} + \sqrt{\gamma_1\log \frac{\eexp p}{n}}\Big)\\
& \leq n \sqrt{\lambda_1}\,  \eta(k,p,n,\gamma_1).
\end{align*}
Moreover, \cite[Proposition 3]{CMW12} implies that with probability at least 
$1-2 (\eexp p)^{-\gamma_1/2}$, 
\[
\frac{1}{n} \norm{\J_A (\Z'\Z-\I)\J_A} \leq \eta(k,n,p,\gamma_1).
\]
Assembling the last three displays, we obtain that with probability at least $1-(\eexp p+3) (\eexp p)^{-\gamma_1/2}$, 
\begin{equation}
	\label{eq:SA-S0-dev}
	\opnorm{\S_A - \S_0} \leq (2\sqrt{\lambda_1}+1)\,\eta(k,n,p,\gamma_1).
\end{equation}
Last but not least, \prettyref{lem:S-Asel-dev} implies that, 
under the condition of \prettyref{thm:rate-rank}, 
with probability at least $1-12(\eexp p)^{1-\gamma_1/2}$, \eqref{eq:Ssel} holds.
Together with \eqref{eq:SA-S0-dev}, it implies that
\begin{equation}
	\label{eq:SAsel-S0-dev}
% |\sigma_l(\Ssel) - \sigma_l(\S_0)| 
\opnorm{\Ssel-\S_0}
\leq 
[C_0(\gamma_1) +3 ]
\sqrt{\lambda_1+1}\,\eta(k,n,p,\gamma_1),
% \quad \forall l\in[p],
\end{equation}
where $C_0(\gamma_1) = 14 + 4\sqrt{\gamma_1}$. %2 + 8\sqrt{\gamma_1+2}$.
By \eqref{eq:thm-rank-assumpt}, we could further upper bound the right-hand side by $\lambda_1/4 \vee 1/2$.
%To this end, note that \eqref{eq:thm-rank-assumpt} implies $\eta(k,n,p,\gamma_1) \asymp \sqrt{{k\over n}\log{\eexp p\over k}}\leq \sqrt{c_0}$.
When $\lambda_1 > 1$, $\sqrt{\lambda_1+1}\leq \sqrt{2\lambda_1} < \sqrt{2}\lambda_1$, so for sufficiently small $c_0$, \eqref{eq:thm-rank-assumpt} implies that the right side of \eqref{eq:SAsel-S0-dev} is further bounded by $\lambda_1/4$.
When $\lambda_1\leq 1$, $\sqrt{\lambda_1+1}\leq \sqrt{2}$, and so the right side of \eqref{eq:SAsel-S0-dev} is further bounded by $1/2$ for sufficiently small $c_0$. 
% \nb{$\gamma_1$ cannot be too large!}

Thus, the last display, together with \eqref{eq:S0-dev}, implies
\begin{align}
% \opnorm{\Ssel} & \geq 
% \opnorm{\bfSigma} - 
\label{eq:Ssel-SS-dev}
\opnorm{\Ssel - \bfSigma} 
& \leq \lambda_1 \eta(k,n,p,\gamma_1)
 + [C_0(\gamma_1) +3 ]\sqrt{\lambda_1+1}\,\eta(k,n,p,\gamma_1) \\
& < \frac{1}{2}(\lambda_1+1).\nonumber
\end{align}
Here, the last inequality comes from the above discussion, and the fact that $\eta(k,n,p,\gamma_1) < 1/4$ for small $c_0$.
The triangle inequality further leads to 
\begin{equation}
	\label{eq:Ssel-norm}
\frac{1}{2}(\lambda_1+1) <  
\opnorm{\bfSigma} - \opnorm{\Ssel - \bfSigma}\leq 
\opnorm{\Ssel} \leq \opnorm{\bfSigma} + \opnorm{\Ssel - \bfSigma} 
< \frac{3}{2}(\lambda_1+1).
\end{equation}

Set $\gamma_2 \geq 2[C_0(\gamma_1)+3] = 8\sqrt{\gamma_1}+34$.
Then \eqref{eq:S0-eval}, \eqref{eq:SAsel-S0-dev} and \eqref{eq:Ssel-norm} jointly imply that,
with probability at least $1-(13\eexp p+5)(\eexp p)^{-\gamma_1/2}$, 
the second inequality in \eqref{eq:rank-uppbd-equiv} holds.
Moreover, 
\eqref{eq:Ssel-SS-dev} and the triangle inequality implies that,
with the same probability, 
\begin{align*}
\sigma_r(\Ssel) & \geq \sigma_r(\bfSigma) - \opnorm{\Ssel-\bfSigma}\\
& \geq 1 + \lambda_r - \lambda_1 \eta(k,n,p,\gamma_1)
 - [C_0(\gamma_1) +3 ]\sqrt{\lambda_1+1}\,\eta(k,n,p,\gamma_1)\\
& \geq 1 + \lambda_r\Big[1-\tau \eta(k,n,p,\gamma_1) - \frac{\sqrt{\lambda_1+1}}{2\lambda_1}\gamma_2 \tau \eta(k,n,p,\gamma_1)\Big]\\
& \geq 1 + \frac{3}{2}\gamma_2 (\lambda_1+1)\eta(k,n,p,\gamma_1).
\end{align*}
Here, the last inequality holds when $\lambda_r \geq \beta\sqrt{\frac{k}{n}\log\frac{\eexp p}{k}}$ for a sufficiently large $\beta$ which depends only on $\gamma_1$, $\gamma_2$ and $\tau$.
In view of \eqref{eq:Ssel-norm}, the last display implies the first inequality of \eqref{eq:rank-uppbd-equiv}. 
This completes the proof of the upper bound.
 %\end{proof}

\bigskip

% \begin{proof}[Proof of \prettyref{thm:rate-V}]
\noindent{\bf Proof of \prettyref{thm:rate-V}}
Let $E$ be the event such that \prettyref{lem:inclusion}, \prettyref{lem:S-Asel-dev}, the upper bound in \prettyref{thm:rate-rank}, and \eqref{eq:sigma-r-S0} hold.
Then $\Pr(\comp{E}) \leq C(\eexp p)^{1-\gamma_1/2}$.

On the event $E$, 
$\wh\bfSigma = \Ssel$.
Moreover,
\prettyref{lem:S0} shows that
$\V\V'$ is the projection matrix onto the principal subspace of $\S_0$, and
\prettyref{thm:rate-rank} ensures $\wh\V\in \reals^{p\times r}$.
Thus, \prettyref{prop:SS-V} leads to
\[
\begin{aligned}
\norm{\hSS-\S_0}\ind{E} & \geq \frac{1}{2} (\sigma_r(\S_0)- \sigma_{r+1}(\S_0)) \norm{\hV\hV'-\V\V'}\ind{E}\\
& \geq \frac{\lambda_r}{4}\norm{\hV\hV'-\V\V'}\ind{E},
\end{aligned}
\]
and so
\begin{align*}
\Ex\norm{\hV\hV'-\V\V'}^2\ind{E} \leq
 \frac{16}{\lambda_r^2}\,\Ex\norm{\hSS-\S_0}^2\ind{E}.
\end{align*}
To further bound the right-hand side of the last display, we apply \eqref{eq:SA}, \eqref{eq:SASS-2}, \eqref{eq:SASS-3}, \prettyref{lem:inclusion} and \prettyref{lem:S-Asel-dev} to obtain
\[
\begin{aligned}
\Ex\norm{\hSS-\S_0}^2\ind{E} & \leq \Ex\norm{\Ssel-\S_0}^2\ind{E} \\
& \lesssim \Ex\norm{\Ssel-\S_A}^2\ind{E} + \Ex\norm{\S_A-\S_0}^2
\lesssim (\lambda_1+1)\frac{k}{n}\log\frac{\eexp p}{k}.
\end{aligned}
\]
Together with the second last display, this implies 
\begin{equation}
\label{eq:V-rate-E}
\Ex \norm{\hV\hV'-\V\V'}^2\ind{E} \leq C\tau^2
\frac{k(\lambda_1+1)}{n \lambda_1^2}\log\frac{\eexp p}{k}.
\end{equation}

Now consider the event $\comp{E}$. Note that $\opnorm{\hV\hV' - \V\V'}\leq 1$ always holds. 
Thus, 
% \nr{ZM: maybe we should spell out $h(\lambda)$ since we did not introduce this notation in this paper.}
\begin{equation}
\label{eq:V-rate-Ec}
\Ex \norm{\hV\hV'-\V\V'}^2\ind{\comp{E}} \leq \Pr(\comp{E}) \leq C(\eexp p)^{1-\gamma_1/2} \lesssim \frac{\lambda_1+1}{n\lambda_1^2},
\end{equation}
where the last inequality holds under condition \eqref{eq:n-p-lambda-assumpt} for all $\gamma_1 \geq (1+\frac{2}{M_0})M_1$.
Assembling \eqref{eq:V-rate-E} and \eqref{eq:V-rate-Ec}, we obtain the upper bounds.
 %\end{proof}

%!TEX root = /Users/zongming/Documents/Dropbox/Tony-Zongming-Yihong/SPCA-opnorm/SPCA_opnorm_sub.tex

%\subsubsection{Proof of Theorems \ref{thm:rate-rank} and \ref{thm:lb-rank}}
%\subsubsection{Proof of \prettyref{thm:rate-rank} and \prettyref{prop:rank01}}
%\label{sec:pf-lb-rank}
\subsubsection{Proofs of the Lower Bounds}
\label{sec:pf-lower}

% \begin{proof}[Proof of \prettyref{thm:lb-Sigma-V}]
{\bf Proof of \prettyref{thm:lb-Sigma-V}}
%\begin{proof}[Proof of \prettyref{thm:lb-V}]
%The proof is divided into two parts.
%We first establish the minimax lower bound for estimating the principle subspace $\sp(\V)$ under the spectral norm. Next we establish the minimax lower bound for estimating the spiked covariance matrix $\bfSigma$ under the spectral norm, which is considerably more involved.
$1^\circ$
The minimax lower bound for estimating $\sp(\V)$ follows straightforwardly from previous results on estimating the leading singular vector, \ie, the rank-one case (see, \eg, \cite{Birnbaum12,Vu12}). The desired lower bound \prettyref{eq:lb-V} can be found in \cite[Eq.~(58)]{CMW12} in the proof of \cite[Proof of Theorem 3]{CMW12}.

$2^\circ$
Next we establish the minimax lower bound for estimating the spiked covariance matrix $\bfSigma$ under the spectral norm, which is considerably more involved.
%$2^\circ$ Now we turn to the lower bound for estimating $\bfSigma$. 
Let $\Theta_1 = \Theta_1(k,p,r,\lambda)$.
In view of the fact that $a \wedge (b+c) \leq a \wedge b + a \wedge c \leq 2 (a \wedge (b+c))$ for all $a,b,c \geq 0$, it is convenient to prove the following equivalent lower bound
		\begin{equation}
\inf_{\hSS} \sup_{\bfSigma \in\Theta} \Ex \|\hSS -\bfSigma \|^2 
\gtrsim
\lambda^2 \wedge \pth{ \frac{(\lambda+1) k}{n} \log \frac{e p}{k}} + \lambda^2 \pth{1  \wedge  \frac{r}{n}}.
	\label{eq:rate-Sigma-equiv}
\end{equation}
To this end, we show that the minimax risk is lower bounded by the two terms on the right-hand side of \prettyref{eq:rate-Sigma-equiv} separately. In fact, the first term is the minimax rate in the rank-one case and the second term is the rate of the oracle risk when the estimator knows the true support of $\V$.

$2.1^\circ$  Consider the following rank-one subset of the parameter space
	\[
\Theta'=\{\bfSigma = \I_p + \lambda \v \v': \v \in B_0(k) \cap \bbS^{p-1} \} \subset \Theta_1.
\]
Then $\sigma_1(\bfSigma)=1+\lambda$ and $\sigma_2(\bfSigma)=1$. For any estimator $\hSS$, denote by $\hv$ its leading singular vector. Applying \prettyref{prop:SS-V} yields
\[
	\norm{\hSS-\bfSigma} \geq  \frac{\lambda}{2} \norm{\hv\hv'-\v\v'}.
\]
Then
\begin{align}
\inf_{\hSS} \sup_{\bfSigma \in\Theta'} \Ex \|\hSS -\bfSigma \|^2 
\geq & ~ \frac{\lambda^2}{4} \inf_{\hv} \sup_{\v \in B_0(k) \cap \bbS^{p-1}}  \Ex \norm{\hv\hv'-\v\v'}^2 \nonumber \\
\geq & ~ \frac{\lambda^2}{8} \inf_{\hv} \sup_{\v \in B_0(k) \cap \bbS^{p-1}}  \Ex \fnorm{\hv\hv'-\v\v'}^2 \label{eq:bir1} \\
\gtrsim & ~ \lambda^2  \pth{1 \wedge  \frac{(\lambda+1) k}{n \lambda^2} \log \frac{e p}{k}}, \label{eq:bir2} 
\end{align}
where \prettyref{eq:bir1} follows from $\rank(\hv\hv'-\v\v') \leq 2$ and \prettyref{eq:bir2} follows from the minimax lower bound in \cite[Theorem 2.1]{Vu12} (see also 
\cite[Theorem 2]{Birnbaum12}) for estimating the leading singular vector.
%\nb{The subset here seems not belong to the parameter space of the theorem.}

%\begin{equation}
%\sth{\bfSigma=\qth{\begin{matrix} \bfv_1 & \bszero \\ \bszero & \I_{r-1} \end{matrix}}: \bfv_1 \in \bbS^{p-r}, |\supp(\bfv_1)| \leq k	},
%\calF_0(s-r,p-r+1,1)
%\end{equation}

$2.2^\circ$ To prove the lower bound $\lambda^2 \pth{1 \wedge\frac{r}{n}}$, 
%we shall assume that $r \leq n$ without loss of generality. 
%consider the following subset of the parameter space
%	\[
%\TT''=\sth{\bfSigma = \begin{bmatrix}
%\I_r + \M_0 & \bszero \\ \bszero & \I_{p-r}
%\end{bmatrix}: \M_0 \geq 0, \M_0 = \M_0', \norm{\M_0} \leq \lambda}.
%\]
consider the following (composite) hypotheses testing problem:
\begin{equation}
H_{0}:  \bfSigma = \begin{bmatrix}
\pth{1+\frac{\lambda}{2}} \I_r& \bszero \\ \bszero & \I_{p-r}
\end{bmatrix}, ~\mbox{vs.}~
H_{1}:  \bfSigma = \begin{bmatrix}
\pth{1+\frac{\lambda}{2}} (\I_r + \rho \v\v') & \bszero \\ \bszero & \I_{p-r}
\end{bmatrix}, \v \in \bbS^{r-1},
	\label{eq:oracle-test}
\end{equation}
where $\rho = \frac{b \lambda}{\lambda+2} (1 \wedge \sqrt{\frac{r}{n}})$ with a sufficiently small absolute constant $0 < b < 1/2$. Since $r \in [k]$ and $\rho (1+\frac{\lambda}{2}) \leq \lambda$, both the null and the alternative hypotheses belong to the parameter set $\Theta_1$ defined in \eqref{para.space1}. Following the Le Cam's two-point argument \cite[Section 2.3]{Tsybakov09}, next we show that the minimal sum of Type-I and Type-II error probabilities of testing \prettyref{eq:oracle-test} is non-vanishing. Since any pair of covariance matrices in $H_0$ and $H_1$ differ in operator norm by at least $\rho (1+\frac{\lambda}{2}) = \frac{b \lambda}{2} (1 \wedge \sqrt{\frac{r}{n}})$, we obtain a lower bound of rate $\lambda^2 (1 \wedge \frac{r}{n})$.
%any estimator $\tSS$ induces a test $\indc{\opnorm{\tSS -}}$

To this end, let $\X$ consist of $n$ iid rows drawn from $N(0,\bfSigma)$, where $\bfSigma$ is either from $H_0$ or $H_1$. 
Since under both the null and the alternative, the last $p-r$ columns of $\X$ are standard normal and independent of the first $r$ columns, we conclude that the first $r$ columns form a sufficient statistic. Therefore the minimal Type-I+II error probability testing \prettyref{eq:oracle-test}, denoted by $\epsilon_n$, is equal to that of the following testing problem of dimension $r$ and sample size $n$:
\begin{equation}
	H_{0}:  \bfSigma = \I_r, \quad \mbox{versus} \quad
H_{1}:  \bfSigma = \I_r + \rho \v\v', \v \in \bbS^{r-1}.
	\label{eq:oracle-test-r}
\end{equation}
Recall that the minimax risk is lower bounded by the Bayesian risk. 
%For any prior $\pi$ on $\bbS^{r-1}$, denote by the $\Expect_{\v\sim\pi} [N(0, \I_r + \rho \v\v')^{\otimes n}]$ the mixture alternative distribution. 
For any random vector $\u$ taking values in $\bbS^{r-1}$, denote by the $\Expect [N(0, \I_r + \rho \u\u')^{\otimes n}]$ the mixture alternative distribution with a prior equal to the distribution of $\u$. 
Applying \cite[Theorem 2.2 (iii)]{Tsybakov09} we obtain the following lower bound in terms of the $\chi^2$-divergence from the mixture alternative to the null:
\begin{align}
\epsilon_n 
\geq & ~ 1 - \pth{\chi^2\pth{\Expect[N(0,\I_r+\rho \u\u')^{\otimes n}] \, || \, N(0,\I_r)^{\otimes n}}/2 }^{1/2}
	\label{eq:oracle-test-pe}
\end{align}
Consider the unit random vector $\u$ with iid coordinates taking values in $\frac{1}{\sqrt{r}}\{\pm 1\}$ uniformly. Since $\rho \leq 1$, applying \prettyref{lmm:chi2.cov} yields
%s well as the facts that $\rank(\lambda \v\v')=1$ and $\opnorm{\lambda \v\v'}=\lambda \leq 1$, 
\begin{align}
%	& ~ \chi^2(P_{\Sigma_0+\frac{1}{n} W}||P_{\Sigma_1+\frac{1}{n} W}) \nonumber \\
%1+ \chi^2(P_{H_1}\, || \, P_{H_0}) 
1+ \chi^2\pth{\Expect[N(0,\I_r+\rho \u\u')^{\otimes n}] \, || \, N(0,\I_r)^{\otimes n}}
= & ~ \expect{\det(\I_p - \rho^2 \Iprod{\u}{\tu}\u\tu')^{-\frac{n}{2}}} \nonumber  \\
= & ~ \expect{(\I_p - \rho^2 \Iprod{\u}{\tu}^2)^{-\frac{n}{2}}} \label{eq:ttt1}\\
\leq & ~ \expect{\exp\pth{n \rho^2 \Iprod{\u}{\tu}^2}},  \label{eq:ttt2} \\
%= & ~ \expect{\exp\pth{n \rho^2 \Iprod{\u\u}{\tu\tu}^2}} \\
= & ~ \expect{\exp\pth{n \rho^2 G_r^2 / r^2}},  \label{eq:ttt} 
%= & ~ \expect{\exp\pth{t G_H^2} \indc{M\leq A}} + \expect{\exp\pth{t G_H^2} \indc{M > A}}, \label{eq:s5}
\end{align}
where \prettyref{eq:ttt1} follows from the matrix determinant lemma, \prettyref{eq:ttt2} follows the fact that $\log(1-x) \geq -2x$ for $x \in [0,1/2]$ and $\rho \leq b < 1/2$, and \prettyref{eq:ttt} follows from the definition that $G_r$ denotes the symmetric random walk on $\integers$ at $r\Th$ step defined in \prettyref{eq:Gm}. Since $\frac{n \rho^2}{r^2} \leq \frac{b^2}{r}$, choosing $b^2 \leq \frac{1}{\rateconst}$ as a fixed constant and applying \prettyref{lmm:rm} with $p=k=r$ (the non-sparse case), we conclude that
\begin{equation}
\chi^2\pth{\Expect[N(0,\I_p+\lambda \u\u')^{\otimes n}] \, || \, N(0,\I_p)^{\otimes n}} \leq g(b^2) - 1,
	\label{eq:chi2-oracle}
\end{equation}
where $g$ is given by in \prettyref{lmm:rm} and satisfies $g>1$. 
%Since $\rho^2 \leq \frac{r}{n}$, applying \prettyref{lmm:}, we conclude that the sum of Type-I and Type-II error probabilities of testing \prettyref{eq:oracle-test} is lower bounded by an absolute constant ($\sfp_0(1)$, to be more precise).
%cannot be separated with vanishing error probability. 
Combining \prettyref{eq:oracle-test-pe} -- \prettyref{eq:chi2-oracle}, we obtain the following lower bound for estimating $\bfSigma$:
		\begin{equation}
\inf_{\hSS} \sup_{\bfSigma \in\Theta} \Ex \|\hSS -\bfSigma \|^2 
\gtrsim \pth{1+\frac{\lambda}{2}}^2 \rho^2 \asymp \lambda^2 \pth{1 \wedge\frac{r}{n}}.%here
	\label{eq:bir3}
\end{equation} 
%\end{proof}

%\begin{proof}[Proof of \prettyref{thm:lb-rank}]

As we mentioned in \prettyref{sec:lower}, the rank-detection lower bound in \prettyref{thm:lb-rank} is a direct consequence of \prettyref{prop:rank01} concerning testing rank-zero versus rank-one perturbation, which we prove below.

\subsection{Proof of \prettyref{prop:rank01}}
\begin{proof}
%We shall use a composite testing argument that is similar to the second part of the proof of \prettyref{thm:lb-Sigma-V}.
%By LeCam's minimax theorem, we have
%\begin{equation}
%%\calE_n(k,p,\lambda) = 1 - \inf\sth{\TV\pth{N(0,\I_p)^{\otimes n}, \int N(0,\I_p+\lambda \v\v')^{\otimes n} \pi(\diff \v)}: \pi \in \calM(\bbS^{p-1} \cap B_0(k))}.	
%\calE_n(k,p,\lambda) = 1 - \inf_{\pi \in \calM(\bbS^{p-1} \cap B_0(k))} \TV\pth{N(0,\I_p)^{\otimes n}, \int N(0,\I_p+\lambda \v\v')^{\otimes n} \pi(\diff \v)}.	
%	\label{eq:lecam}
%\end{equation}
%We proceed similarly to the second part of the proof of \prettyref{thm:lb-Sigma-V}.
Analogous to \prettyref{eq:oracle-test-pe}, any random vector $\u$ taking vales in $\bbS^{p-1} \cap B_0(k)$ gives the Bayesian lower bound 
\begin{equation}
%\calE_n(k,p,\lambda) = 1 - \inf\sth{\TV\pth{N(0,\I_p)^{\otimes n}, \int N(0,\I_p+\lambda \v\v')^{\otimes n} \pi(\diff \v)}: \pi \in \calM(\bbS^{p-1} \cap B_0(k))}.	
\calE_n(k,p,\lambda) \geq 1 - \pth{\chi^2(\Expect N(0,\I_p+\lambda \u\u')^{\otimes n} || N(0,\I_p)^{\otimes n})/2}.	
	\label{eq:lecam}
\end{equation}
%Therefore to lower bound $\calE_n(k,p,\lambda)$, it boils to choosing a prior $\pi$ and upper bound the total variation between the null distribution and the mixture induced by $\pi$. In view of the inequality \prettyref{eq:chiTV}, it is sufficient to upper bound the $\chi^2$-divergence instead. 
Put % $t \triangleq \frac{n \lambda^2}{k^2} = \frac{\beta_0}{k} \log \frac{\eexp p}{k}$.
%we upper bound the $\chi^2$-divergence between the null and the mixture distribution under the alternative. Put
\[
t \triangleq \frac{n \lambda^2}{k^2} = \frac{\beta_0}{k} \log \frac{\eexp p}{k}.
\]
%We use the prior corresponding to the distribution of the random vector $\u$ defined in \prettyref{eq:prior-wu}. 
Let the random sparse vector $\u$ be defined in \prettyref{eq:prior-wu}.
In view of \prettyref{lmm:chi2.cov} as well as the facts that $\rank(\lambda \v\v')=1$ and $\opnorm{\lambda \v\v'}=\lambda \leq 1$, we have
\begin{align*}
%	& ~ \chi^2(P_{\Sigma_0+\frac{1}{n} W}||P_{\Sigma_1+\frac{1}{n} W}) \nonumber \\
%1+ \chi^2(P_{H_1}\, || \, P_{H_0})  
1+ \chi^2(\Expect[N(0,\I_p+\lambda \u\u')^{\otimes n}] \, & || \, N(0,\I_p)^{\otimes n})\\
= & ~ \expect{\exp\pth{n \lambda^2 \Iprod{\u\u'}{\tu\tu'}^2}} \\
= & ~ \expect{\exp\pth{n \lambda^2 \iprod{\frac{1}{\sqrt{k}} \J_I \w}{\frac{1}{\sqrt{k}} \J_{\tI} \tw}^2}}  \\
%= & ~ \expect{\exp\pth{\frac{n \lambda^2}{k^2} \Iprod{\J_I W}{\J_{\tI} \tW}^2}}  \\ 
= & ~ \expect{\exp\pth{t (\w' \J_{I\cap \tI} \tw')^2}}  \\ 
= & ~ \expect{\exp\pth{t G_H^2}},  \label{eq:s4}
%= & ~ \expect{\exp\pth{t G_H^2} \indc{M\leq A}} + \expect{\exp\pth{t G_H^2} \indc{M > A}}, \label{eq:s5}
\end{align*}
where in the last step we have defined $H \triangleq |I \cap \tI|  \sim \text{Hypergeometric}(p,k,k)$ and $\{G_m\}$ is the symmetric random walk on $\integers$ defined in \prettyref{eq:Gm}. Now applying \prettyref{lmm:rm}, we conclude that
\[
\chi^2\pth{\Expect[N(0,\I_p+\lambda \u\u')^{\otimes n}] \, || \, N(0,\I_p)^{\otimes n}} \leq g(\beta_0) - 1
\]
where $g$ is given by in \prettyref{lmm:rm} satisfying $g(0+)=1$. In view of \prettyref{eq:lecam}, we conclude that
\[
\calE_n(k,p,\lambda)  \geq \max\{0,1 - \sqrt{(g(\beta_0)-1)/2}\} \triangleq w(\beta_0).
\]
%Consequently, the hypotheses cannot be tested with vanishing error probability. 
Note that the function $w$ satisfies $w(0+)=1$.
%is given by $h=1-\frac{1}{2} \sqrt{g-1}$, where $g$ is defined in \prettyref{lmm:rank-one}, 
%where we have used the Pinsker inequality $2 \TV^2 \leq D$ and $D \leq \log(1+\chi^2)$.	
%	\nb{Here to ensure both hypotheses are in the parameter space $\Theta_0(k,p,r,\lambda,\tau)$, we need to assume that $r$ takes values in $\ntok{0}{k}$, }
 \end{proof}

%\subsection{Proofs of Propositions \ref{prop:rate-Omega} and \ref{prop:rate-Lambda}}
\subsection{Proof of Propositions \ref{prop:rate-Omega} and \ref{prop:rate-Lambda}}
%\subsection{Proofs of Propositions \ref{prop:rate-Omega-Lambda}}
	\label{sec:pf-cor}

We give here a joint proof of Propositions \ref{prop:rate-Omega} and \ref{prop:rate-Lambda}.
%\begin{proof}[Proof of \prettyref{prop:rate-Omega}]
\begin{proof}
$1^\circ$ (Upper bounds) By assumption, we have $\lambda \asymp 1$ and $\frac{k}{n}\log\frac{\eexp p}{k}\leq c_0$. Since $r \in [k]$, applying \prettyref{thm:rate-Sigma} yields
\begin{equation}
\sup_{\bfSigma \in\Theta_1(k, p, r, \lambda)} \Ex \|\hSS -\bfSigma \|^2 
\lesssim
\frac{k}{n} \log \frac{\eexp p}{k}.	
	\label{eq:eee}
\end{equation}
Note that
	\begin{align}
	\norm{\hSS^{-1}-\bfSigma^{-1}}
\leq & ~ \big\|\hSS^{-1}\big\| \big\|\bfSigma^{-1}\big\| \norm{\hSS-\bfSigma}	\leq \frac{\norm{\hSS-\bfSigma}}{1-\norm{\hSS-\bfSigma}} 	,
\label{eq:ooo}
%\leq & ~ \frac{\norm{\hSS-\bfSigma}}{1-\norm{\hSS-\bfSigma}} 	\nonumber 
\end{align}
where the last inequality follows from $\sigma_p(\bfSigma)=1$ and Weyl's inequality. By Chebyshev's inequality, $\prob{\|\hSS-\bfSigma\| \geq t} \leq \frac{1}{t^2} \Ex \|\hSS-\bfSigma\|^2$. Let $E=\{\|\hSS-\bfSigma\| \leq \frac{1}{2}\}$. Then 
\begin{equation}
\prob{\comp{E}} \lesssim \frac{k}{n} \log \frac{\eexp p}{k}.	
	\label{eq:ee}
\end{equation}
 Moreover, again by Weyl's inequality, we have $E \subset \{\sigma_p(\hSS) \geq \frac{1}{2}\}$ hence $\hOO\indc{E} = (\hSS)^{-1} \indc{E}$ in view of \prettyref{eq:hOO}.  On the other hand, by definition we always have $\opnorm{\hOO} \leq 2$. Therefore
\begin{align}
\Ex \norm{\hOO-\OO}^2 
= & ~ \Ex \norm{\hSS^{-1}-\bfSigma^{-1}}^2 \indc{E} + \Ex \norm{\hOO-\OO}^2 \indc{\comp{E}} \nonumber \\
\leq & ~ \Ex \norm{\hSS^{-1}-\bfSigma^{-1}}^2 \indc{E} + 3\prob{\comp{E}} \nonumber \\
\leq & ~ 4 \Ex \norm{\hSS -\bfSigma}^2 + 3\prob{\comp{E}} \label{eq:omega1}\\
\lesssim & ~\frac{k}{n} \log \frac{\eexp p}{k}. \label{eq:omega2}
\end{align}
where \prettyref{eq:omega1} follows from \prettyref{eq:ooo} and \prettyref{eq:omega2} is due to \prettyref{eq:eee} and \prettyref{eq:ee}.

Next we prove upper bound for estimating $\E$. Recall that $\E+\I_p$ and $\hE+\I_p$ give the diagonal matrices formed by the ordered singular values of $\bfSigma$ and $\hSS$, respectively.
Similar to the proof of \prettyref{prop:rate-Lambda}, Weyl's inequality	implies that $\|\hE -\E \|  = \|\hE + \I_p -(\E + \I_p) \| = \max_{i} |\sigma_i(\hSS) - \sigma_i(\bfSigma)| \leq \opnorm{\hSS-\bfSigma}$, where for any $i\in[p]$, $\sigma_i(\bfSigma)$ is the $i\Th$ largest eigenvalue of $\bfSigma$.
In view of \prettyref{thm:rate-Sigma}, we have $\Ex \|\hE -\E\|^2 \leq \frac{k}{n} \log \frac{\eexp p}{k}$.

$2^\circ$ (Lower bounds) The lower bound follows from the testing result in \prettyref{prop:rank01}. 
Consider the testing problem \eqref{eq:HT.rank01}, then
both the null ($\bfSigma=\I_p$) and alternatives ($\bfSigma=\I_p + \lambda \v\v'$) are contained in the parameter space $\Theta_1$. By \prettyref{prop:rank01}, they cannot be distinguished with probability $1$ if $\lambda^2 \asymp \frac{k}{n} \log \frac{\eexp p}{k}$. The spectra differs at least $|\sigma_1(\I_p) - \sigma_1(\I_p + \lambda \v\v')| = \lambda$. By the Woodbury identity, $(\I_p + \lambda \v\v')^{-1} = \I_p - \frac{\lambda}{\lambda+1} \v\v'$, hence $\opnorm{\I_p - (\I_p + \lambda \v\v')^{-1}} = \frac{\lambda}{1+\lambda} \asymp \lambda$. The lower bound $\frac{k}{n} \log \frac{\eexp p}{k}$ is now completed by way of the usual two-point argument.
%Next we turn to the lower bound, which follows the same idea as in the proof of \prettyref{thm:lb-Sigma-V} for estimating the principle subspace $\V$ under the loss \prettyref{eq:loss}. 
%Consider the special case of $\bfSigma = \I_p + \lambda \V\V'$. By the Woodbury identity, $\OO = \I_p - \frac{\lambda}{\lambda+1} \V\V'$. Assume that $\V$ belongs to the subset \prettyref{eq:q02}. Then $\OO= \I_p - \qth{\begin{smallmatrix} \frac{\lambda}{\lambda+1}\v_1\v_1' & \bszero \\ \bszero & \frac{\lambda}{\lambda+1} \I_{r-1} \end{smallmatrix}}$.
%%Similar to the reasoning in the first part in the proof of \prettyref{thm:lb-Sigma-V}, we can restrict our estimator to be the form $\tOO=\qth{\begin{smallmatrix} \A & \bszero \\ \bszero & \I_{r-1} \end{smallmatrix}}$
%Then
%\[
%\inf_{\hOO} \sup_{\bfSigma \in \Theta_1(k,p,r,\lambda,\tau)} \Ex \opnorm{\hV\hV' -\V\V' }^2  \geq \frac{c k (\lambda+1)}{n \lambda^2} \log \frac{e (p-r+1)}{k} \geq \frac{c' k (\lambda+1)}{n \lambda^2} \log \frac{e p}{k},
%\]
 \end{proof}

%\begin{proof}[Proof of \prettyref{prop:rate-Lambda}]
%Recall that $\hLL$ denotes the diagonal matrices formed by the ordered singular values of $\hSS$. 
%Similar to the proof of \prettyref{prop:rate-Lambda}, Weyl's inequality	implies that $\|\hLL -\LL \|_{\ell_\infty} \leq \opnorm{\hSS-\bfSigma}$. In view of \prettyref{thm:rate-Sigma}, we have $\Ex \|\hLL -\LL \|_{\ell_\infty}^2 \leq \frac{k}{n} \log \frac{\eexp p}{k}$. The lower bound follows from the testing result in \prettyref{prop:rank01}, since both the null ($\bfSigma=\I_p$) and alternatives ($\bfSigma=\I_p + \lambda \v\v'$) are contained in the parameter space $\Theta_1$.
% \end{proof}

%\appendices
\subsection{Proof of \prettyref{prop:SS-V}}
	\label{app:SS-V}
	
\begin{proof}
Recall that $\sigma_1(\bfSigma) \geq \ldots \geq \sigma_p(\bfSigma) \geq 0$ denote the ordered singular values of $\bfSigma$. If $\sigma_r(\hSS) \leq \frac{\sigma_r(\bfSigma)+ \sigma_{r+1}(\bfSigma)}{2}$, then by Weyl's inequality,
 % \cite[Theorem 4.3.1]{HornJohnson}, 
we have $\norm{\hSS-\bfSigma} \geq \frac{1}{2} (\sigma_r(\bfSigma)- \sigma_{r+1}(\bfSigma))$.
 If $\sigma_r(\hSS) \geq \frac{\sigma_r(\bfSigma)+ \sigma_{r+1}(\bfSigma)}{2}$, then by David--Kahan's sin-theta theorem \cite{Davis70} (see also \cite[Theorem 10]{CMW12} 
% \nb{Zongming, What should be an appropriate ref for this basic version of sin-theta thm?}), 
we have
\[
\norm{\hV\hV'-\V\V'} \leq \frac{\norm{\hSS-\bfSigma}}{|\sigma_r(\hSS) - \sigma_{r+1}(\bfSigma)|} \leq \frac{2 \norm{\hSS-\bfSigma}}{\sigma_r(\bfSigma) - \sigma_{r+1}(\bfSigma)},
\]
completing the proof of \prettyref{eq:SS-V} in view of the fact that $\norm{\hV\hV'-\V\V'} \leq 1$.
 \end{proof}

\subsection{Proof of \prettyref{lmm:rm}}
\label{sec:pf-lmmrm}

\begin{proof}
First of all, we can safely assume that $p \geq 5$, for otherwise the expectation on the right-hand side of \prettyref{eq:rm} is obviously upper bounded by an absolutely constant. In the sequel we shall assume that
\begin{equation}
0 < a \leq \frac{1}{36} \, .
% \frac{1}{10 \log (2\eexp)}.	
	\label{eq:arange}
\end{equation}
We use normal approximation of the random walk $G_m$ for small $m$ and use truncation argument to deal with large $m$. To this end, we divide $[p]$, the whole range of $k$, into three regimes.
%consider three cases depending on the value of $k$.

\emph{Case I: Large $k$.} Assume that $\frac{p}{2} \leq k \leq p$. Then $t \leq \frac{2a \log(2 \eexp)}{p}$. % $ \leq \frac{1}{16 p}$. 
By the following non-asymptotic version of the Tusn\'ady's coupling lemma (see, for example, \cite[Lemma 4, p. 242]{BM89}), for each $m$, there exists $Z_m \sim N(0,m)$, such that
\begin{equation}
|G_m| \stackrel{\rm s.t.}{\leq} |Z_m| + 2.
	\label{eq:tusnady}
\end{equation}
Since $H\leq p$, in view of \prettyref{eq:arange}, we have
\begin{align}
\expect{\exp\pth{t G_H^2}}
\leq & ~ \expect{\exp\pth{t G_p^2}} \leq \expect{\exp\pth{t (|Z_p|+2)^2}}	\nonumber \\
%\leq & ~ \expect{\exp\pth{2 t (Z_m^2+4)}}	\\
\leq & ~ \exp(8t) \expect{\exp\pth{2 t Z_p^2}}	\label{eq:NP1}\\
= & ~  \frac{\exp(8t)}{\sqrt{1-4 pt}} \leq \frac{\exp(7a)}{\sqrt{1 - 8 \log (2 \eexp) a}}  % 2, % \sqrt{5} \exp(8/5), 
\label{eq:case1}
\end{align}
%where \prettyref{eq:case1} follows from $H \leq p$ and \prettyref{eq:arange}.

\emph{Case II: Small $k$.} Assume that $1 \leq k \leq \log \frac{\eexp p}{k}$, which, in particular, implies that $k \leq p^{\frac{1}{3}} \wedge \frac{p}{2}$ since $p \geq 5$. Using $G_H^2 \leq H^2 \leq k H$, we have
\begin{align}
\expect{\exp\pth{t G_H^2}} 
%\leq & ~ \expect{\exp\pth{c t M} }\\
\leq & ~ \expect{\pth{\frac{\eexp p}{k}}^{a H}} \leq \pth{1 + \frac{k}{p-k} \pth{\pth{\frac{\eexp p}{k}}^{a} - 1 }}^k \label{eq:hbin}\\
\leq & ~ \exp\sth{\frac{2 \sqrt{k}}{\sqrt{p}} \pth{\pth{\frac{\eexp p}{k}}^{a}-1}} \label{eq:hbin2}\\
%\leq \exp\pth{2 \eexp^a p^{\frac{a}{2}-\frac{1}{3}}} \label{eq:hbin2}\\
%\leq & ~ 9\,, \label{eq:case2}
\leq & ~ \exp(8a) \,, \label{eq:case2}
\end{align}
where \prettyref{eq:hbin} follows from the stochastic dominance of hypergeometric distributions by binomial distributions (see, \eg, \cite[Theorem 1.1(d)]{KM10}) 
%by the binomial distribution $\text{Bin}(k,\frac{k}{p-k})$ if $k\leq \frac{p}{2}$
\begin{equation}
\Hyper(p,k,k) \stackrel{\rm s.t.}{\leq} \text{Bin}\pth{k,\frac{k}{p-k}}, \quad k\leq \frac{p}{2}
	\label{eq:hyperbin}
\end{equation}
and the moment generating function of binomial distributions, \prettyref{eq:hbin2} is due to $(1+x)^k \leq \exp(kx)$ for $x \geq 0$ and $\frac{k^2}{p} \leq \sqrt{\frac{k}{p}}$, \prettyref{eq:case2} is due to 
\[
\sup_{y \geq 1} \frac{(\eexp y)^a-1}{\sqrt{y}} = 2 \sqrt{\eexp} a ((1-2 a))^{\frac{1}{2 a}-1} \leq 4 a, \quad \forall \; a \leq \frac{1}{2},
\]
and
%where $\stackrel{s.t.}{\leq}$ denotes stochastically dominance

\emph{Case III: Moderate $k$.} Assume that $\log \frac{\eexp p}{k} \leq k \leq \frac{p}{2}$. Define
\begin{equation}
A \triangleq 
%\frac{1}{8t} \wedge k = 
\frac{k}{\sqrt{a} \log\frac{\eexp p}{k}}	\wedge k
	\label{eq:A8t}
\end{equation}
%t \triangleq \frac{c}{k} \log \frac{\eexp p}{k}, \quad
and write
\begin{equation}
\expect{\exp\pth{t G_H^2}}	= \expect{\exp\pth{t G_H^2} \indc{H \leq A}} + \expect{\exp\pth{t G_H^2} \indc{H > A}}.
\label{eq:s5}
\end{equation}
Next we bound the two terms in \prettyref{eq:s5} separately: For the first term, we use normal approximation of $G_m$.  
By \prettyref{eq:tusnady} -- \prettyref{eq:NP1}, for each fixed $m \leq A$, we have
\begin{align}
\expect{\exp\pth{t G_m^2}}
\leq & ~  \frac{\exp(8t)}{\sqrt{1-4 m t}}	\leq \frac{\exp(8a)}{\sqrt{1 - 4 \sqrt{a}}}, \label{eq:s6}
\end{align}
where the last inequality follows from $k \geq \log \frac{\eexp p}{k}$ and \prettyref{eq:A8t}. 

To control the second term in \prettyref{eq:s5}, without loss of generality, we assume that $A \leq k$, \ie, $A = \frac{k}{\sqrt{a} \log\frac{\eexp p}{k}} \geq \frac{1}{\sqrt{a}}$.
we proceed similarly as in \prettyref{eq:hbin} -- \prettyref{eq:case2}:
% \footnote{Note that in this case the usual stochastic-dominance argument fails, because their tail behavior has fine difference in the regime we are considering.} 
\begin{align}
%& ~ \expect{\exp\pth{t G_H^2} \indc{M > A}} \nonumber \\
& ~\expect{\exp\pth{t G_H^2} \indc{H > A}} \nonumber \\
\leq & ~ \expect{\exp\pth{tk H} \indc{H > A}} \nonumber \\
\leq & ~ \sum_{m > A} \exp(t k m) \pth{\frac{k}{p-k}}^m \pth{\frac{p-2k}{p-k}}^{k-m} \binom{k}{m} \label{eq:d0}\\
%\leq & ~ \sum_{m > A} \exp(t k m) \pth{\frac{2k}{p}}^m \pth{\frac{\eexp k}{m}}^m \label{eq:d1}\\
\leq & ~ \sum_{m > A} \exp\pth{a m \log \frac{p}{k} - m \log \frac{p}{2k} +  m \log \pth{ \sqrt{a} \eexp \log\frac{\eexp p}{k}}  } \label{eq:d1} \\
%\leq & ~ \sum_{m > A} \exp\sth{- \pth{1-a} m \log \frac{p}{k} +  m \log\pth{ 16 a \eexp \log\frac{\eexp p}{k}}  } \label{eq:d2}\\
\leq & ~ \sum_{m > A} \exp\sth{- \pth{1-a-\frac{1}{\eexp}} m \log 2 +  m \log(2 \sqrt{a} \eexp) + \frac{m}{\eexp}}   \label{eq:d3}\\
%\leq & ~ \frac{1}{1 -\eexp^{1+\frac{1}{\eexp}} 2^{3+a+\frac{1}{\eexp}}  a} \nonumber \\
\leq & ~ 7 \exp\pth{-\frac{1}{\sqrt{a}}}, \label{eq:d4}
\end{align}
where \prettyref{eq:d0} follows from \prettyref{eq:hyperbin},
\prettyref{eq:d1} follows from that $\binom{k}{m} \leq \pth{\frac{\eexp k}{m}}^m$, $p \geq 2k$ and $m \leq A$, \prettyref{eq:d3} follows from that $\eexp \log x \leq x$ for all $x \geq 1$, and \prettyref{eq:d4} is due to \prettyref{eq:A8t} and our choice of $a$ in \prettyref{eq:arange}. Plugging \prettyref{eq:s6} and \prettyref{eq:d4} into \prettyref{eq:s5}, we obtain
\begin{equation}
	\expect{\exp\pth{t G_H^2}} \leq \frac{\exp(8a)}{\sqrt{1 - 4 \sqrt{a}}} + 7 \exp\pth{-\frac{1}{\sqrt{a}}}.
	\label{eq:case3}
\end{equation}
%Assembling \prettyref{eq:case1}, \prettyref{eq:case2} and \prettyref{eq:case3}, we complete the proof of \prettyref{eq:rm}, with 
%\[g(a) = \frac{\exp(7a)}{\sqrt{1 - 8 \log (2 \eexp) a}} \vee \exp(8a) \vee \left(\frac{\exp(8a)}{\sqrt{1 - 4 \sqrt{a}}} + 7 \exp\pth{-\frac{1}{\sqrt{a}}}\right).
%\]
We complete the proof of \prettyref{eq:rm}, with 
$g(a)$ defined as the maxima of the right-hand sides of \prettyref{eq:case1}, \prettyref{eq:case2} and \prettyref{eq:case3}.
\end{proof}

%
%\bibliographystyle{plainnat}
%\bibliography{strings,spca,refs}

\end{document}